\newtheorem{thm}{Theorem}[section]
\newtheorem{cor}[thm]{Corollary}
\newtheorem{lem}[thm]{Lemma}
\theoremstyle{definition}
\newtheorem{rem}[thm]{Remark}
\numberwithin{equation}{section}
\newcommand{\R}{\mathbb R}
\newcommand\C{\mathcal{C}}
\newcommand{\cS}{\mathcal{S}}
\newcommand{\cT}{\mathcal{T}}
\newcommand{\dy}{\,{\rm d}y}
\newcommand{\dx}{\,{\rm d}x}
\newcommand{\dw}{\,{\rm d}w}
\newcommand{\dz}{\,{\rm d}z}
\newcommand{\dt}{\,{\rm d}t}
\newcommand{\ds}{\,{\rm d}s}
\newcommand{\dtau}{\,{\rm d}\tau}
\newcommand{\dxi}{\,{\rm d}\xi}
\newcommand{\lec}{\lesssim}
\renewcommand{\div}{\mathop{\rm div}\nolimits}
\newcommand{\norm}[1]{\left\|#1\right\|}
\newcommand{\al}{\alpha}
\newcommand{\pd}{\partial}
\newcommand{\ep}{\varepsilon}
\newcommand{\de}{\delta}
\newcommand{\loc}{\mathrm{loc}}
\begin{document}
\title{Poisson kernel and blow-up of the second derivatives near the boundary for Stokes equations with Navier boundary condition}%
\author[H. Chen]{Hui Chen}%
\address[H. Chen]
{School of Science, Zhejiang University of Science and Technology, Hangzhou, 310023, People's Republic of China }
\email{chenhui@zust.edu.cn}
\author[S. Liang]{Su Liang}
\address[S. Liang]
{Department of Mathematics, University of British Columbia, Vancouver, BC V6T1Z2, Canada }
\email{liangsu96@math.ubc.ca}
\author[T.-P. Tsai]{Tai-Peng Tsai}
\address[T.-P. Tsai]
{Department of Mathematics, University of British Columbia, Vancouver, BC V6T1Z2, Canada }
\email{ttsai@math.ubc.ca}

\date{}  

\begin{abstract}
We derive the explicit Poisson kernel of Stokes equations in the half space with nonhomogeneous Navier boundary condition (BC) for both infinite and finite slip length.  By using this kernel, for any $q>1$, we construct a finite energy solution of Stokes equations with Navier BC in the half space, with bounded velocity and velocity gradient, but having unbounded second derivatives in $L^q$ locally near the boundary. While the Caccioppoli type inequality of Stokes equations with Navier BC is true for the first derivatives of velocity, which is proved by us in [CPAA 2023], this example shows that the corresponding  inequality for the second derivatives of the velocity is not true. Moreover, we give an alternative proof of the blow-up using a shear flow example, which is simple and  is the solution of both Stokes and Navier--Stokes equations.
\end{abstract}

\maketitle

\noindent {{\sl Key words:} Navier boundary condition, Stokes system, Poisson kernel, Navier--Stokes equations, local regularity, boundary blowup}

\vskip 0.2cm

\noindent {\sl AMS Subject Classification (2000):}  35Q30

\tableofcontents

\section{Introduction}

We consider the following Stokes system in $\R^{n}_+\times \R$, $n\geq2$,
\begin{equation}\label{Stokes_Eq}
\partial_{t}\bm{u}-\Delta\bm{u}+\nabla p=0,\ \ \  \div \bm{u}=0, \end{equation}
with nonhomogeneous Navier boundary condition (Navier BC) on $\partial\R^{n}_+\times \R$
\begin{equation}\label{Navier-BC}
\partial_n u_{k}-\alpha u_{k}=a_k,\ \ u_n=a_n,
\end{equation}
for $1\leq k\leq n-1$. The system can be also considered for $t \in (0,\infty)$ with an initial condition at $t=0$.
Here $\bm{u}(x,t)=(u_1,u_2,\ldots, u_n)$ is the velocity field, $p(x,t)$ is the pressure, $\alpha \geq 0$ is the friction coefficient, and $\bm{a}(x',t)=(a_1,a_2,\ldots,a_n)$ is the boundary value. For $\alpha=0$, \eqref{Navier-BC} is reduced to Lions boundary condition. When $\alpha>0$, its inverse $1/\alpha$ has the unit of length and is called the slip length. We refer to our previous paper \cite{Chen2023} and the reference therein for more detailed introduction of the physical meaning, and the historical study on the Stokes and Navier--Stokes equations under Navier BC. 

In this paper, we continue to study the local regularity of the solution of the Stokes equations \eqref{Stokes_Eq} near boundary. We recall some developments along this line. For the  Stokes system in the half space with Dirichlet BC, Seregin \cite[Lemma 1.1]{Seregin2009} showed spatial smoothing ($\nabla^2 \bm{u},\nabla p\in L^{q,r}(Q_{1/2}^+)$, $1<q,r<\infty$)  if one assumes $\bm{u}, \nabla \bm{u}, p \in L^{q,r}(Q_1^+)$, where $Q_R^+$ denotes parabolic cylinders (see Section \ref{Sec 2} for definition). However, without any assumption on the pressure, the smoothing in spatial variables fails due to non-local effect of the pressure. The first counter example constructed by K. Kang \cite{Kang2004a} shows that there exists a bounded weak solution of the Stokes equations  whose normal derivative is unbounded near boundary,
\begin{equation*}
\sup _{Q_{\frac{1}{2}}^{+}}\left|\partial_{n} \bm{u}\right|=\infty, \quad \sup _{Q_1^{+}}|u|+\|\nabla \bm{u}\|_{L^2\left(Q_1^{+}\right)}<\infty.
\end{equation*}
Seregin-\v Sver\'ak \cite{Seregin2010} found a simplified example in the form of a shear flow such that its
gradient is unbounded near boundary, although the velocity field is locally bounded. Recently, Chang-Kang \cite[Theorem 1.1]{Chang2023} proved that for any $1<q<\infty$, a bounded very weak solution (depending on $q$) can be constructed whose derivatives are unbounded in $L^{q}$,
\begin{equation}\label{CK}
\|\nabla \bm{u}\|_{L^q (Q_\frac{1}{2}^{+})}=\infty, \quad\|\bm{u}\|_{L^{\infty}\left(Q_1^{+}\right)}<\infty.
\end{equation}
See \cite{Chang2020,Kang2022,Kang2023,Seregin2000} for related study along this line  and \cite{Gustafson2006} for partial regularity results on the boundary with Dirichlet BC.

For Stokes system \eqref{Stokes_Eq} in the half space with  Navier BC, we  proved in \cite[Theorem 1.1]{Chen2023} the following Caccioppoli's inequality,
\begin{equation}\label{main1}
\| \nabla\bm{u}\|_{L^{q,r}(Q^{+}_\frac{1}{2})}\lesssim\|\bm{u}\|_{L^{q,r}\left(Q_1^+\right)},\ 1<q,r<\infty.
\end{equation}
It is a very important distinction in comparison to the Dirichlet BC where Caccioppoli's inequality fails near boundary by \eqref{CK}. 
Moreover, we also proved in \cite[Theorem 1.3]{Chen2023} that $\nabla^m\bm{u}\in L^{\infty}(Q^+_{\frac{1}{2}})$, $m\geq1$ for bounded velocity $\bm{u}$, if the friction coefficient $\alpha=0$,
\begin{equation}\label{main2}
\| \nabla^m\bm{u}\|_{L^{\infty}(Q^{+}_\frac{1}{2})}\lesssim\|\bm{u}\|_{L^{\infty}\left(Q_1^+\right)}.
\end{equation}
Similar study was initiated by Dong-Kim-Phan \cite{Dong2022}, where boundary second derivative estimates for generalized Stokes system with VMO coefficients under Lions BC were proved. The main motivation of this paper is to study the regularity criteria of higher order derivatives of $\bm{u}$ near the boundary when assuming non-zero friction coefficient. It will be proved that $\nabla\bm{u}$ can not be replaced by $\nabla^2\bm{u}$ in inequality \eqref{main1}.

At first, we derive the explicit Poisson kernel of Stokes system with Navier BC, which allows us to express $\bm{u}$ and $p$ in terms of $\bm{a}$. 

\begin{thm}[Poisson kernel]\label{thm_main} For $n\geq 2$, $\al\ge0$, $1\leq k,j\leq n$, let the kernels $P_{kj}=P_{kj}^{0}+\alpha P_{kj}^{\alpha}$ and $g_{j}=g_{j}^{0}+\alpha g_{j}^{\alpha}$ be given as in Lemma \ref{lem_p0} and Lemma \ref{lem_pa}.
\begin{enumerate}[\textup{(}a\textup{)}]
\item Suppose $\bm{a}(x',t)\in C_c^{\infty}(\R^{n-1}\times \R)^n$ and define $\bm{u}(x,t)$, $p(x,t)$ by
\begin{align}
u_k(x,t)=&\int_{\R}\int_{\R^{n-1}}P_{kj}(x-y^\prime,t-s)a_j(y^\prime,s)\dy^{\prime}\ds,\label{u_expression}\\
p(x,t)=&\int_{\R}\int_{\R^{n-1}}g_{j}(x-y^\prime,t-s)a_j(y^\prime,s)\dy^{\prime}\ds. \nonumber
\end{align}
Then $\bm{u}(x,t)$, $p(x,t)$ are smooth functions satisfying \eqref{Stokes_Eq}--\eqref{Navier-BC}. Note that repeated index means summation. 

\item Suppose $\bm{a}\in L^q_c$, $q>1$, and define $\bm{u}$ by \eqref{u_expression}. Then $\bm{u}$ is a very weak solution of Stokes equations \eqref{Stokes_Eq} in $\R^n_+\times\R $ with Navier boundary condition \eqref{Navier-BC} with data $\bm{a}$ on $\pd\R^n_+\times\R$, i.e., $\bm{u}\in L^1_{\loc}(\overline{\R^n_+}\times\R)$, and
\begin{align}\label{cor_weak form3}
\begin{split}
\int_{\R}\int_{\R^n_+}\bm{u}\cdot(\pd_t+\Delta)\bm{\Phi}\dx\dt&=\int_{\R}\int_{\R^{n-1}}\sum_{k=1}^{n-1}a_k\Phi_k-a_n\pd_n\Phi_n\dx'\dt,\\
\int_{\R^n_+}\bm{u}\cdot\nabla \Psi\dx&=-\int_{\R^{n-1}}a_n\Psi \dx',\quad (a.e.\ t)
\end{split}
\end{align}
for any $\Psi\in C_c^1(\overline{\R^n_+})$ and any divergence free vector $\bm{\Phi}\in C_c^{2,1}(\overline{\R^n_+}\times\R)$ satisfying
\[
\partial_n \Phi_{k}-\alpha \Phi_{k}=0,\ \ (1 \le k < n),\quad \Phi_n=0,\quad \text{when\ } x_n=0.
\]
\end{enumerate}
\end{thm}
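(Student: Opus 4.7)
My plan is to handle part (a) by reducing each of the three assertions (Stokes equation, divergence-freeness, Navier BC) to a kernel identity contained in Lemmas \ref{lem_p0} and \ref{lem_pa}, and to handle part (b) by mollification combined with integration by parts on the smooth solutions produced by (a).

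For part (a), the compact support and smoothness of $\bm{a}$, together with the interior smoothness and decay of the kernels, allow me to differentiate under the integral. The Stokes equation $(\pd_t-\Delta)\bm{u}+\nabla p=0$ then reduces, for each fixed $j$, to the statement that $(P_{1j},\dots,P_{nj})$ together with $g_j$ solves the homogeneous Stokes system on $\R^n_+\times\R$; $\div\bm{u}=0$ reduces to $\pd_{x_k}P_{kj}=0$ in the interior; and the Navier BC as $x_n\to 0^+$ reduces to the distributional boundary identities $(\pd_n-\al)P_{kj}|_{x_n=0}=\de_{jk}\de_0(x')\de_0(t)$ for $1\le k<n$ together with $P_{nj}|_{x_n=0}=\de_{jn}\de_0(x')\de_0(t)$. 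All three properties are part of the construction of the kernels recorded in the cited lemmas, so (a) follows essentially by inspection once those lemmas are in hand.

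For part (b), I mollify to $\bm{a}^{(\ep)}=\bm{a}*\rho_\ep\in C_c^\infty$, let $(\bm{u}^{(\ep)},p^{(\ep)})$ be the classical solutions from part (a), and derive the weak form by integration by parts. Pairing the Stokes equation with a divergence-free test field $\bm{\Phi}$ satisfying the stated BC, the pressure term drops because $\div\bm{\Phi}=0$ and $\Phi_n|_{x_n=0}=0$, and integrating $\Delta u_k\Phi_k$ by parts twice in $x_n$ produces the boundary integral $\int_{\R^{n-1}}(\Phi_k\pd_n u_k-u_k\pd_n\Phi_k)\dx'$ for each $k$. For $k<n$, substituting $\pd_n u_k=a_k+\al u_k$ and $\pd_n\Phi_k=\al\Phi_k$ makes the $\al$ pieces cancel and leaves $\int a_k\Phi_k$; for $k=n$, using $\Phi_n=0$ and $u_n=a_n$ leaves $-\int a_n\pd_n\Phi_n$. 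This yields the first identity of \eqref{cor_weak form3}; the second is $\div\bm{u}=0$ tested against $\Psi$ and integrated by parts, with $u_n|_{x_n=0}=a_n$.

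The passage to the limit $\ep\to 0$ is the \textbf{main technical step}: I need $\bm{u}^{(\ep)}\to\bm{u}$ in $L^1_{\loc}(\overline{\R^n_+}\times\R)$ so that both sides of the weak identities converge. By Minkowski this reduces to a continuity bound $\|P_{kj}*a\|_{L^1(K)}\le C_K\|a\|_{L^q}$ for each compact $K\subset\overline{\R^n_+}\times\R$, which in turn requires the explicit pointwise estimates on $P_{kj}$ extracted from the formulas in Lemmas \ref{lem_p0} and \ref{lem_pa}, especially to control the singular behavior near $(x_n,t)=(0,0)$ and the decay at infinity. Once this is established, the left-hand side of \eqref{cor_weak form3} passes to the limit by duality against the $L^\infty_c$ test $(\pd_t+\Delta)\bm{\Phi}$, and the right-hand side passes since $\bm{a}^{(\ep)}\to\bm{a}$ in $L^q$ against the bounded test data on the boundary, transferring the identities from the smooth to the $L^q_c$ case.
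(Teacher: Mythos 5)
Your plan mirrors the paper's: part (a) rests on the Fourier construction of the kernels (the paper checks the Stokes system directly for $\tilde{\bm{u}},\tilde p$ in Fourier space, and verifies the Navier BC as $L^2$ convergence in Lemma~\ref{lem-boundary-convergence} by going back to the Fourier formula \eqref{FT of u} rather than the physical-space kernels), and part (b) proceeds by density via the $L^q\to L^q_{\loc}$ continuity of Lemma~\ref{Lq-estimate-kernel}. Your integration-by-parts derivation of \eqref{cor_weak form3} from the classical solution is correct, and in fact fills in a step the paper states without detail. Two points, however, are glossed over in a way that would produce real difficulty if followed literally.

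First, the continuity bound $\|P_{kj}*a\|_{L^1(K)}\lesssim\|a\|_{L^q}$ for $P^0_{kn}$ does \emph{not} follow from pointwise kernel estimates plus Young's convolution inequality. The expression \eqref{3.11} contains the term $-2\de(t)\pd_k E(x)$, which for $1\le k\le n-1$ is a genuine singular integral in $x'$: the $L^1_{x'}$ norm of $\pd_k E(\cdot,x_n)$ over any ball diverges as $x_n\to 0^+$, so Young cannot give a bound uniform in $x_n$ near the boundary. The paper's Lemma~\ref{lem_Lp P0kn} resolves this by a Calder\'on--Zygmund / Fourier-multiplier argument (plus scaling for the remaining $\mathcal{H}(t)\pd_k\pd_n^2\int_t^\infty\Gamma$ piece). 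The remaining kernels, namely $P^0_{kj}$ and $P^\al_{kj}$ with $j<n$ and $P^\al_{kn}$, are $L^1_{\loc}$ uniformly in $x_n\ge 0$ by Lemma~\ref{lem_estP} and do admit the Young argument you describe, but the $P^0_{kn}$ piece needs the singular-integral ingredient. Second, the Navier BC does not follow "by inspection" from Lemmas~\ref{lem_p0},~\ref{lem_pa}, which only record the kernel formulas; the verification that the boundary trace recovers $\bm a$ as $x_n\to 0^+$ is a separate limiting argument, carried out in the paper's Lemma~\ref{lem-boundary-convergence}. Your distributional-boundary-identity framing is equivalent but still requires that verification rather than being contained in the cited lemmas.
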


By $L^q_c$ we mean $L^q$ functions with compact support.

We refer to \cite[Definition 2.2]{Chen2023} for the motivation of the definition of very weak solutions. Above we have added nonhomogeneous boundary data in the definition. Next we use the Poisson kernel to construct a finite energy solution of Stokes equations, which has bounded velocity and velocity gradient, while its second derivatives blow up in $L^q$ norm. 

\begin{thm}[Unbounded second derivative in $L^q$]\label{thm_main2}
Let $n\ge2$, $1<q\leq \infty$, $\alpha>0$. There exists a very  weak solution of Stokes equations \eqref{Stokes_Eq} in $Q^+_1$ with homogeneous \textup{(}i.e. $\bm{a}=0$\textup{)} Navier boundary condition \eqref{Navier-BC} on $B'_1$ such that 
\begin{align}\label{thm_main2_blow}
\|\bm{u}\|_{L^{\infty}(Q_1^+)}+\|\nabla \bm{u}\|_{L^{\infty}(Q_1^+)}<\infty, \ \ \ \|\nabla^2 \bm{u}\|_{L^{q}(Q^+_{\frac{1}{2}})}=\infty.
\end{align}
Moreover, it is the restriction of a global solution of \eqref{Stokes_Eq} in $\R^n_+ \times (0,2)$ with finite global energy
\begin{equation}\label{global energy}
\sup_{0<t<2}\int_{\R_+^n}|\bm{u}(x,t)|^2\dx+\int_0^2\int_{\R_+^n}|\nabla \bm{u}(x,t)|^2\dx\dt<\infty,
\end{equation}
and this global solution satisfies Navier boundary condition \eqref{Navier-BC} on $\pd\R^n_+ \times (0,2)$ with bounded and compactly supported boundary data $\bm{a}$.
\end{thm}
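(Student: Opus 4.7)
\emph{Plan of proof.} The strategy is to build $\bm{u}$ via the Poisson kernel from Theorem~\ref{thm_main}, choosing boundary data $\bm{a}$ so that the non-local piece $\alpha P^\alpha_{kj}$ of the kernel---which is absent in the Lions case ($\alpha=0$) where \eqref{main2} holds---generates a boundary singularity in $\nabla^2\bm{u}$ while preserving the bounds on $\bm{u}$ and $\nabla\bm{u}$. Any blowup must come from the $\alpha$-contribution, so the entire argument is built around exciting it.

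Concretely, I would take $\bm{a}\in L^\infty(\pd\R^n_+\times\R)$ bounded with $\mathrm{supp}(\bm{a})\subset\{|x'|>R\}\times[0,2]$ for some $R>1$, and define $\bm{u}$ via \eqref{u_expression}. By Theorem~\ref{thm_main}(b), $\bm{u}$ is a very weak solution of \eqref{Stokes_Eq}--\eqref{Navier-BC} on $\R^n_+\times\R$ with data $\bm{a}$. Since $\bm{a}\equiv 0$ on $B'_1\times\R$, the restriction of $\bm{u}$ to $Q_1^+$ satisfies homogeneous Navier BC on $B'_1$. For the boundedness of $\bm{u}$ and $\nabla\bm{u}$ in $Q_1^+$, note that for $(x,t)\in Q_1^+$ and $(y',s)\in\mathrm{supp}(\bm{a})$ the tangential/time separations $|x'-y'|\ge R-1$, $|t-s|\le 3$ keep $P_{kj}(x-y',t-s)$ and $\nabla P_{kj}$ smooth with uniform size bounds, yielding $\bm{u},\nabla\bm{u}\in L^\infty(Q_1^+)$.

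For the blowup of $\nabla^2\bm{u}$, split $\bm{u}=\bm{u}^0+\alpha\bm{u}^\alpha$ along $P^0_{kj}+\alpha P^\alpha_{kj}$. From the explicit expression of $P^\alpha_{kj}$ in Lemma~\ref{lem_pa}, I would extract the leading boundary behavior of some component of $\partial_n^2 P^\alpha_{kj}(x-y',t-s)$ as $x_n\to 0^+$; I expect it to have the form $x_n^{-\gamma}K(x'-y',t-s)$ for some $\gamma\in(0,1)$ and a non-trivial residual kernel $K$. Choosing $\bm{a}$ (possibly depending on $q$) so that $K\ast\bm{a}$ is nonzero on a positive-measure subset of $B'_{1/2}\times(-\tfrac14,0)$ then yields $|\partial_n^2\bm{u}^\alpha(x,t)|\gtrsim x_n^{-\gamma}$ there. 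For each $q\in(1,\infty]$, calibrating $\gamma=\gamma(q)\in[1/q,1)$ (any $\gamma\in(0,1)$ working for $q=\infty$) gives $\|\nabla^2\bm{u}\|_{L^q(Q_{1/2}^+)}=\infty$, while $|\nabla\bm{u}|$ stays bounded via $\int_0^{x_n}s^{-\gamma}ds\lesssim x_n^{1-\gamma}$---consistent with \eqref{main1}.

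The global energy bound \eqref{global energy} follows from a standard energy estimate for Stokes with inhomogeneous Navier BC and bounded, compactly supported data. The main obstacle is the blowup analysis: extracting the precise boundary singularity of $\partial_n^2 P^\alpha_{kj}$ and constructing $\bm{a}\in L^\infty_c$ that excites it with the correct strength, without destroying the $L^\infty$ control of $\bm{u}$ and $\nabla\bm{u}$. This requires a close reading of the explicit kernel from Lemma~\ref{lem_pa} and a careful treatment of the interplay between the non-local structure of $P^\alpha$ and the tangential/temporal support of $\bm{a}$; the shear-flow construction advertised in the abstract gives a useful sanity check but bypasses much of this kernel analysis.
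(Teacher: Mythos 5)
Your plan matches the paper's skeleton at the top level: build $\bm{u}$ via the Poisson kernel of Theorem \ref{thm_main}, take $\bm{a}$ with tangential support away from the origin so that homogeneous Navier BC holds on $B'_1$, split $\bm{u}=\bm{u}^0+\alpha\bm{u}^\alpha$, and show that the $\alpha$-piece is the source of the second-derivative blowup. But the mechanism you propose for the blowup is not what the kernel actually produces, and the step it is meant to replace is the heart of the proof.

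You anticipate that $\partial_n^2 P^\alpha_{kj}$ contributes a power-law singularity $x_n^{-\gamma}$ with $\gamma\in(0,1)$ calibrated to $q$. It does not. With $|x'-y'|$ bounded below (which you correctly impose via the support of $\bm{a}$) and bounded data, the kernel bounds of Lemma \ref{lem_estP} and the convolution structure yield at most a \emph{logarithmic} singularity of $\partial_n^2\bm{u}^\alpha$ as $x_n\to 0$; indeed the paper's error piece satisfies $|H_2|\lec\ln(2+1/x_n^2)$, which lies in $L^q(Q^+_{1/2})$ for every finite $q$. Worse, if $\bm{a}$ were also smooth in time, the integration against $\partial_n^2\Gamma_1(x_n,\cdot)$ in the $t-s$ variable exhibits enough cancellation that $\partial_n^2\bm{u}^\alpha$ is actually \emph{bounded} near the boundary. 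No choice of bounded, compactly supported, time-regular $\bm{a}$ can produce an $x_n^{-\gamma}$ singularity from this kernel, so the calibration $\gamma=\gamma(q)$ has nothing to act on and the plan cannot be completed as stated.

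What actually drives the blowup, and what your proposal is missing, is the \emph{time-irregularity} of the boundary data. The paper sets $\bm{a}=g^{\cS}(x')g^{\cT}(t)e_n$ with $g^{\cS}$ a fixed cutoff supported at distance $\sim 4$ from the origin, and $g^{\cT}\in L^\infty(\R)\setminus\dot B^{\frac12-\frac1{2q}}_{q,q}(\R)$. The key technical input is Lemma \ref{Besov est}: for the half-line caloric extension $f(x,t)=\int_\R\Gamma_1(x,t-s)g(s)\ds$ one has the two-sided bound $\|\partial_x^2 f\|_{L^q(\R_+\times\R)}\approx\|g\|_{\dot B^{\frac12-\frac1{2q}}_{q,q}(\R)}$. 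The paper isolates from $\partial_n^2\bm{u}^\alpha$ a factored leading term $H_1=\psi(x')\theta(x_n,t)$, with $\theta(x_n,t)=\int_0^t\partial_n^2\Gamma_1(x_n,s)g^{\cT}(t-s)\ds$, proves $\|\psi\|_{L^q(B'_{1/2})}\neq 0$, and concludes from the lemma that $\|\theta\|_{L^q(\R_+\times\R)}=\infty$; all the remaining pieces of $\partial_n^2\bm{u}$ are shown to be in $L^q$. So the ``tuning parameter'' is a Besov smoothness index $\tfrac12-\tfrac1{2q}$ for the time profile, not a spatial power $\gamma$. Identifying this mechanism and proving Lemma \ref{Besov est} is precisely the hard part of the argument; the rest of your outline (support separation for the $L^\infty$ bounds on $\bm{u},\nabla\bm{u}$, finite global energy from kernel decay) is on the right track, though the paper obtains the energy bound by direct pointwise decay estimates on the solution rather than by appealing to an abstract energy inequality.
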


\begin{rem}

(i) This theorem under Navier BC
is parallel to \eqref{CK} under zero BC. It tells us that the similar inequality of \eqref{main1} with $\nabla \bm{u}$ replaced by $\nabla^2 \bm{u}$ is not true when $\al>0$. 
By \eqref{main2}, the blow-up of $\nabla^2\bm{u}$ does not happen if  $\alpha=0$.

(ii) One of the key ingredients in our proof is the inequalities \eqref{ee} of Lemma \ref{Besov est} for the norm in the Besov space $\dot{B}^{\frac 12-\frac 1{2q}}_{q,q}(\R)$.
The first inequality of \eqref{ee} is proved by Chang-Kang \cite{Chang2023} using interpolation theorems in anisotropic Sobolev spaces. We give a simple, alternative proof of full \eqref{ee} in this paper.

(iii) The blow-up of $\nabla^2\bm{u}$ will not happen if we assume regularity of the pressure. Actually in \cite[Theorem 1.2]{Chen2023}, we proved that for $1<q,r<\infty$,
\begin{equation}\label{1.7}
\|\nabla \pd_t\bm{u}\|_{L^{q,r}(Q^+_{\frac{1}{2}})}+\|\nabla^3\bm{u}\|_{L^{q,r}(Q^+_{\frac{1}{2}})}+\|\nabla^2 p\|_{L^{q,r}(Q^+_{\frac{1}{2}})}\lesssim \|\bm{u}\|_{L^{q,r}(Q^+_1)}+\|p\|_{L^{q,r}(Q^+_1)}.
\end{equation}

(iv) An  example can be constructed to make $\|\nabla^4\bm{u}\|_{L^{q}(Q^+_{\frac{1}{2}})}=\infty$ while $\|p\|_{L^{\infty}(Q^+_1)}$ and $\|\bm{u}\|_{L^{\infty}(Q^+_1)}$ are bounded. See Remark \ref{rem63}.

(v) If we further require the boundary data to be continuous, we can still construct a solution with unbounded second derivative, similar to \cite{Kang2023}. See Remark \ref{rem5.4}.
\end{rem}

Inspired by Seregin-\v Sver\'ak \cite{Seregin2010}, where they construct a shear flow in $\R^3_+$ with unbounded derivatives of velocity near the boundary under Dirichlet BC, we will give another example which has the same kind of blow-up as \eqref{thm_main2_blow}. The advantage of this example is that it is simple and is the solution of both Stokes and Navier--Stokes equations. It should be pointed out that it is a shear flow and has no spatial decay. In particular, it has infinite global energy. 

\begin{thm}[Shear flow example]\label{thm_main3}
Let $n\geq 2$, $1<q\leq\infty$, $\alpha>0$. There exists a very weak solution of both Stokes equations \eqref{Stokes_Eq} and Navier-Stokes equations in $\R^n_+\times (0,2)$, with homogeneous $($i.e.~$\bm{a}=0)$ Navier boundary condition \eqref{Navier-BC} on $\pd \R^n_+\times (0,2)$ and zero initial condition at $t=0$ such that \eqref{thm_main2_blow} is true.
\end{thm}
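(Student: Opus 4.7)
The plan is to take $\bm{u}$ to be a shear flow $\bm{u}(x,t)=(U(x_n,t),0,\ldots,0)$ together with a pressure $p(x,t)=-x_1 G(t)$, where $U$ and $G$ are scalar functions of the indicated variables to be chosen. The shear structure gives three benefits at once: $\bm{u}$ is automatically divergence-free; the nonlinearity $(\bm{u}\cdot\nabla)\bm{u}$ vanishes identically, so the flow solves Stokes and Navier--Stokes simultaneously; and the components $u_2,\ldots,u_n\equiv 0$ trivially satisfy their parts of the Navier boundary condition. The Stokes system then reduces to the one-dimensional forced heat equation
\begin{equation*}
\partial_t U - \partial_{nn} U = G(t), \qquad x_n>0,\ t\in(0,2),
\end{equation*}
with Robin boundary condition $\partial_n U(0,t)-\alpha U(0,t)=0$ and zero initial data $U(\cdot,0)=0$.

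To solve this in a way that makes the boundary behavior a free parameter, set $\Phi(t):=\int_0^t G(s)\,ds$ and $W:=U-\Phi$. Then $W$ satisfies the homogeneous 1D heat equation with zero initial data and inhomogeneous Robin condition $\partial_n W(0,t)-\alpha W(0,t)=\alpha\Phi(t)$, and admits an explicit Poisson representation
\begin{equation*}
W(x_n,t)=\int_0^t K^\alpha(x_n,t-s)\,\Phi(s)\,ds,
\end{equation*}
where $K^\alpha$ is the classical 1D Poisson kernel for the Robin heat problem (a Gaussian plus an error-function correction linear in $\alpha$). Because $\Phi$ depends only on $t$, we have $\partial_n U=\partial_n W$ and $\partial_{nn}U=\partial_{nn}W$, so \eqref{thm_main2_blow} reduces to showing that $W$ and $\partial_n W$ are bounded on $Q_1^+$ while $\partial_{nn}W\notin L^q(Q_{1/2}^+)$, with everything now phrased on the 1D half-line.

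The substantive freedom is the choice of the boundary datum $h(t):=\alpha\Phi(t)$. I would take $h$ continuous, compactly supported in $(0,2)$, vanishing at $t=0$, and of class $C^{1/2}$, but sitting just outside the Besov endpoint dictated by the sharp trace inequality of Lemma~\ref{Besov est} applied to second derivatives. A convenient realization is a lacunary superposition of rescaled bumps concentrating at $t=0$, entirely parallel to the building block used in the proof of Theorem~\ref{thm_main2}; because $K^\alpha$ differs from the Neumann Poisson kernel only by a smooth term of order $\alpha$, the regularity analysis transfers directly to the Robin problem.

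The main obstacle is the sharpness step. Boundedness of $W$ follows from $h\in L^\infty$ by the maximum principle, and boundedness of $\partial_n W$ from $h\in C^{1/2}$ by a direct kernel estimate, yielding $\|\bm{u}\|_{L^\infty(Q_1^+)}+\|\nabla\bm{u}\|_{L^\infty(Q_1^+)}<\infty$. The negative direction $\|\partial_{nn}W\|_{L^q(Q_{1/2}^+)}=\infty$ is precisely the sharp side of the Besov characterization in Lemma~\ref{Besov est} applied to the 1D Robin problem, and is what forces the Besov-threshold design of $h$. Once $U$ is constructed, the resulting shear flow $\bm{u}$ with pressure $p$ is smooth for $t>0$, so the very weak formulation \eqref{cor_weak form3} holds by classical integration by parts, the homogeneous Navier BC and zero initial data are satisfied pointwise, and \eqref{thm_main2_blow} holds by construction.
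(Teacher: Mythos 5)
Your overall route is the same as the paper's: shear flow ansatz with pressure linear in $x_1$, reduction to a one--dimensional forced heat equation on the half-line with Robin boundary condition, extraction of the key term $\theta(x_n,t)=\int_0^t\partial_n^2\Gamma_1(x_n,s)\,h(t-s)\,ds$, and the Besov characterization of Lemma~\ref{Besov est} to force the blow-up. The substitution $W=U-\Phi$ (so that $W$ solves the \emph{homogeneous} heat equation with inhomogeneous Robin data $\alpha\Phi$) is a cosmetic re-packaging of the paper's computation, which instead writes $p=-g'(t)x_1$, applies Duhamel with the Robin Green function, and integrates by parts in time to arrive at formula \eqref{shear_flow1}; both routes land on the same representation.

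There is, however, a concrete defect in your design of the boundary datum. You ask for $h$ ``continuous, ... of class $C^{1/2}$, but sitting just outside the Besov endpoint'' $\dot B^{\frac12-\frac1{2q}}_{q,q}(\R)$. Such an $h$ does not exist. For $1<q<\infty$, the regularity index $\frac12-\frac1{2q}$ is strictly less than $\frac12$, and any compactly supported $C^{1/2}$ function has $\|\Delta_j h\|_{L^q}\lesssim 2^{-j/2}$ for $j\ge 0$ (with the low-frequency tail controlled by $\|h\|_{L^q}$), so the Besov sum $\sum_j 2^{j(\frac12-\frac1{2q})q}\|\Delta_j h\|_{L^q}^q$ converges; hence compactly supported $C^{1/2}\subset\dot B^{\frac12-\frac1{2q}}_{q,q}$. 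For $q=\infty$ the two spaces coincide locally. The $C^{1/2}$ hypothesis was introduced only to bound $\partial_n W$, but that is unnecessary: the paper shows (see \eqref{6.99}) that $h\in L^\infty$ alone gives $|\partial_n W|\lesssim 1$, since $\int_0^t|\partial_y\Gamma_1(y,\tau)|\,d\tau$ is bounded uniformly in $y>0,\,0<t\le 1$ (it is $\partial_y^2\Gamma_1$ whose $L^1_t$ norm degenerates like $1/y$). The correct and realizable requirement is $h\in L^\infty(\R)\setminus\dot B^{\frac12-\frac1{2q}}_{q,q}(\R)$, as in \eqref{gt}, with an explicit example in \cite[Appendix~C]{Chang2023}; your ``lacunary superposition of rescaled bumps'' is in the right spirit but should be pinned to that choice rather than to any Hölder regularity. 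With that replacement your argument matches the paper's.
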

In fact, the same proof can be used to give a second example of \eqref{CK} under zero BC. See Remark \ref{rem62}.

The rest of this paper is organized as follows:  We introduce notations and some preliminary results in Section \ref{Sec 2}. We derive the expression of the Poisson kernel in Section \ref{Sec 3}.  We give the estimate of the kernel and prove Theorem \ref{thm_main} in Section \ref{Sec 4}. We introduce the example for Theorem \ref{thm_main2} in Section \ref{Sec 5}. We give a proof of Theorem \ref{thm_main3}  in Section \ref{Sec 6}.

\section{Notations and preliminaries}\label{Sec 2}
For $x=(x_{1},\cdots,x_{n})\in \R^{n}$, denote $x^{\prime}=(x_{1},\cdots,x_{n-1})$ as the horizontal variable. Let $B'_R(x')=\{z'\in \R^{n-1}: |z'-x'|<R\}$, $\C_R^+=B'_R(0)\times (0,R)$, and $Q^+_R=\C_R^+\times(1-R^2,1)$. Denote the heat kernel
\begin{equation*}
\Gamma(x,t)=\begin{cases}\frac{1}{(4\pi t)^{n/2}}e^{\frac{-|x|^2}{4t}}& \text { for } t>0\\0& \text { for } t\leq 0\end{cases}
\end{equation*}
and the fundamental solution of $-\Delta$
\begin{equation*}
E(x)= \begin{cases}\frac{1}{n(n-2)|B_1|} \frac{1}{|x|^{n-2}} & \text { for } n \geq 3 \\ -\frac{1}{2 \pi} \log |x| & \text { if } n=2 \end{cases}.
\end{equation*}
Here $|B_1|=\frac{\pi^{\frac{n}{2}}}{\Gamma(\frac{n}{2}+1)}$ denotes the volume of the unit ball $B_1$ in $\R^n$. Notice that for $n\geq2$,
\begin{equation}\label{ET}
\int_{0}^{+\infty}\partial_{i}\Gamma(x,t)\dt=\partial_{i}E(x).
\end{equation}
Here we add the derivative $\partial_{i}$ to avoid singularity at $t=+\infty$ when $n=2$. Let $\mathcal{H}(t)$ be the Heaviside function. Denote $f\lesssim g$ if there is a constant $C$ such that $f\leq C g$, and $C$ depends on some variables which are clear in the context. Let $f*g$ be the convolution in terms of $(x',t)$. We use the following definition of Fourier transform
\begin{equation*}
\mathcal{F}f(\xi)=(2\pi)^{-\frac{n}{2}}\int_{\R^n}e^{-ix\cdot\xi}\,f(x)\dx,
\end{equation*}
\begin{equation*}
\mathcal{F}_{x'}f(\xi^{\prime},x_{n})=(2\pi)^{-\frac{n-1}{2}}\int_{\R^{n-1}}e^{-ix^{\prime}\cdot\xi^{\prime}}\,f(x)\dx^{\prime},
\end{equation*}
and
\begin{equation}\label{Fourier}
\tilde{g}(\xi^{\prime},x_{n},s)\equiv\mathcal{F}_{x',t}\,g=(2\pi)^{-\frac{n}{2}}\int_{\R}\int_{\R^{n-1}}e^{-i(x^{\prime}\cdot\xi^{\prime}+st)}g(x,t)\dx^{\prime}\dt.
\end{equation}

The following several lemmas (Lemma \ref{lem21}--Lemma \ref{lem24}) will be used in calculating the inverse Fourier transform to get the Poisson kernel. They are not new and are widely used for the derivation of Green tensor of the Stokes equations \cite{Kang2023,Lin2023}. 
We give their proofs here for the convenience of the readers.

\begin{lem}\label{lem21}
Suppose $x_{n}>0$. For $n\ge 3$ we have
\begin{equation*}
\mathcal{F}_{x'}^{-1}\left(\frac{e^{-x_n|\xi'|}}{|\xi'|}\right)=2(2\pi)^{\frac{n-1}{2}} E(x).
\end{equation*}
For $n=2$, we have
\[\mathcal{F}_{x'}^{-1}\left(-e^{-x_2|\xi_1|}\right)=2(2\pi)^{\frac{1}{2}} \partial_2E(x), \ \ \ \mathcal{F}_{x'}^{-1}\left(i\xi_1\frac{e^{-x_2|\xi_1|}}{|\xi_1|}\right)=2(2\pi)^{\frac{1}{2}} \partial_1E(x).\]
\end{lem}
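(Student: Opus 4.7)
The plan is to reduce everything to the classical Poisson kernel of the Laplace operator on the upper half-space, whose Fourier transform is well known. Recall that for $y>0$ and $\zeta\in\R^m$, the standard identity
\[
\int_{\R^m}\frac{\Gamma((m{+}1)/2)}{\pi^{(m+1)/2}}\,\frac{y}{(|z|^2+y^2)^{(m+1)/2}}\,e^{-iz\cdot\zeta}\dz= e^{-y|\zeta|}
\]
holds. Applied with $m=n-1$, $y=x_n$, $\zeta=\xi'$, this gives, under our Fourier convention,
\[
\mathcal{F}_{x'}^{-1}\bigl(e^{-x_n|\xi'|}\bigr) =(2\pi)^{(n-1)/2}\,\frac{\Gamma(n/2)}{\pi^{n/2}}\,\frac{x_n}{|x|^n},\quad x_n>0.
\]
All three identities in the lemma will be derived from this single formula together with the relations $\partial_{x_n}(e^{-x_n|\xi'|}/|\xi'|)=-e^{-x_n|\xi'|}$ and $|B_1|=\pi^{n/2}/\Gamma(n/2+1)$.

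For $n\ge 3$, I would apply $\partial_{x_n}$ to both sides of the claimed identity. On the Fourier side this produces $-e^{-x_n|\xi'|}$, so the $x_n$-derivative of the left-hand side equals $-\mathcal{F}_{x'}^{-1}(e^{-x_n|\xi'|})$, which by the formula above equals $-(2\pi)^{(n-1)/2}\,\tfrac{2}{n|B_1|}\,\tfrac{x_n}{|x|^n}$ after using $\Gamma(n/2)/\pi^{n/2}=2/(n|B_1|)$. On the right-hand side,
\[
\pd_{x_n}\bigl[2(2\pi)^{(n-1)/2}E(x)\bigr]=2(2\pi)^{(n-1)/2}\cdot\Bigl(-\tfrac{x_n}{n|B_1|\,|x|^n}\Bigr),
\]
which matches. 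Since both sides vanish as $x_n\to\infty$ (uniformly in $x'$ on compact sets), integration in $x_n$ from infinity yields the claimed equality.

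For $n=2$, the first identity is immediate from the base formula: with $m=1$ it gives $\mathcal{F}_{x'}^{-1}(e^{-x_2|\xi_1|})=\sqrt{2/\pi}\,x_2/|x|^2$, while $\pd_2 E(x)=-x_2/(2\pi|x|^2)$, and the constants line up to produce $-\sqrt{2/\pi}\,x_2/|x|^2=2\sqrt{2\pi}\,\pd_2 E(x)$. For the second identity, one cannot write $\mathcal{F}_{x'}^{-1}(e^{-x_2|\xi_1|}/|\xi_1|)$ directly because of the singularity at $\xi_1=0$, but the factor $i\xi_1/|\xi_1|=i\operatorname{sgn}(\xi_1)$ is bounded, so I would compute the inverse transform by splitting $\int_{-\infty}^\infty$ into $\int_0^\infty$ and $\int_{-\infty}^0$ and evaluating each as an elementary exponential integral. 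The result telescopes to $\tfrac{i}{\sqrt{2\pi}}\bigl(\tfrac{1}{x_2-ix_1}-\tfrac{1}{x_2+ix_1}\bigr)=-\sqrt{2/\pi}\,x_1/|x|^2$, which equals $2\sqrt{2\pi}\,\pd_1 E(x)$.

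The main nuisance is not conceptual but bookkeeping: several factors of $(2\pi)^{(n-1)/2}$, $\Gamma(n/2)$, and $|B_1|$ must be tracked carefully, and in the $n=2$ case one must be slightly cautious about the non-integrable behavior of $1/|\xi_1|$ near zero, which is why the lemma states its $n=2$ version only after multiplying by $|\xi_1|$ or $i\xi_1$. Decay at $x_n\to\infty$ (used to fix the integration constant when differentiating in $x_n$) is clear for $n\ge 3$; for $n=2$ this step is bypassed by computing the transform explicitly.
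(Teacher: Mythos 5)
Your proposal is correct, but the route is genuinely different from the paper's. The paper starts from $\mathcal{F}_x(E)=(2\pi)^{-n/2}|\xi|^{-2}$ and recovers $\mathcal{F}_{x'}(E)$ by computing the one-dimensional inverse transform in $\xi_n$ with the residue theorem (the pole at $\xi_n=i|\xi'|$ in the upper half-plane produces the factor $e^{-x_n|\xi'|}/(2|\xi'|)$); for $n=2$ it then falls back on a direct elementary computation. You instead start from the classical Poisson-kernel identity $\mathcal{F}_{x'}^{-1}(e^{-x_n|\xi'|})=(2\pi)^{(n-1)/2}\frac{\Gamma(n/2)}{\pi^{n/2}}\frac{x_n}{|x|^n}$, differentiate the claimed identity in $x_n$ to eliminate the nonintegrable factor $1/|\xi'|$, verify the derivatives agree (after the identity $\Gamma(n/2)/\pi^{n/2}=2/(n|B_1|)$), and fix the constant by decay as $x_n\to\infty$ — a step you rightly note needs absolute convergence of the $\xi'$-integral, which is available exactly when $n\ge 3$ since $1/|\xi'|$ is locally integrable on $\R^{n-1}$. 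Your $n=2$ computations are correct and essentially the same arithmetic as the paper's. What the paper's residue approach buys is uniformity: the same contour computation reappears verbatim in Lemma~\ref{lem23} for the heat kernel, where a Poisson-kernel shortcut is unavailable. What your approach buys is avoidance of complex analysis entirely, at the mild cost of the integrate-from-infinity step and a bit more bookkeeping with the Gamma-function and volume constants. Both are valid.
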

\begin{proof}
It is well known that the Fourier transform of the fundamental solution of $-\Delta$ is 
\[\mathcal{F}_x\Big(E(x)\Big)=\frac{1}{(2\pi)^{\frac{n}{2}}|\xi|^2},\quad n\geq3.
\]
Hence,
\begin{align}
\mathcal{F}_{x'}\Big(E(x)\Big)&=\mathcal{F}_{x_n}^{-1}\Big(\frac{1}{(2\pi)^{\frac{n}{2}}|\xi|^2}\Big)\notag\\
&=\frac{1}{\sqrt{2\pi}}\int_{\mathbb{R}}\frac{1}{(2\pi)^{\frac{n}{2}}|\xi|^2}e^{i\xi_nx_n}\dxi_n\notag\\
&=\frac{1}{(2\pi)^{\frac{n+1}{2}}}\int_{\mathbb{R}}\frac{1}{(\xi_n+i|\xi'|)(\xi_n-i|\xi'|)}e^{i\xi_nx_n}\dxi_n \label{2.1}.
\end{align}
We can use Residue Theorem to calculate the integral in \eqref{2.1}. Since $x_n>0$, $e^{i\xi_nx_n}$ is bounded when $\xi_n$ is in the upper half of the complex plane. There is only one singular point $i|\xi'|$ in the upper half of the complex plane, so
\begin{equation*}
\mathcal{F}_{x'}\Big(E(x)\Big)=2\pi i\frac{1}{(2\pi)^{\frac{n+1}{2}}} \cdot\frac{1}{i|\xi'|+i|\xi'|}e^{i(i|\xi'|)x_n}=\frac{1}{2(2\pi)^{\frac{n-1}{2}}} \cdot\frac{e^{-x_n|\xi'|}}{|\xi'|}.
\end{equation*}

For $n=2$, we calculate them directly:
\begin{align*}
&\ \ \ \  \mathcal{F}_{x_1}^{-1}\left(-e^{-x_2|\xi_1|}\right)=-(2\pi)^{-\frac{1}{2}}\int_{\R}e^{ix_{1}\cdot z}e^{-x_{2}|z|}\dz\\
&=-(2\pi)^{-\frac{1}{2}}\int_{0}^{\infty}e^{-x_{2}z} \left(e^{ix_{1}z}+e^{-ix_{1}z}\right)\dz=-(2\pi)^{-\frac{1}{2}}\frac{2x_{2}}{x_{1}^2+x_{2}^2}=2(2\pi)^{\frac{1}{2}}\partial_2E(x),
\end{align*}
\begin{align*}
&\ \ \ \  \mathcal{F}_{x_1}^{-1}\left(i\xi_1\frac{e^{-x_2|\xi_1|}}{|\xi_1|}\right)=(2\pi)^{-\frac{1}{2}}\int_{\R}e^{ix_{1}\cdot z}\frac{iz}{|z|}e^{-x_{2}|z|}\dz\\
&=(2\pi)^{-\frac{1}{2}}i\int_{0}^{\infty}e^{-x_{2}z} \left(e^{ix_{1}z}-e^{-ix_{1}z}\right)\dz=-(2\pi)^{-\frac{1}{2}}\frac{2x_{1}}{x_{1}^2+x_{2}^2}=2(2\pi)^{\frac{1}{2}}\partial_1E(x).\qedhere
\end{align*}
\end{proof}

\begin{rem}\label{rem22}
When $n=2$, the Fourier transform $\mathcal{F}_{x_1}\big(2(2\pi)^{\frac{1}{2}}E(x_1,x_2)\big)$ is not the function $\frac{e^{-x_2|\xi_1|}}{|\xi_1|}$, which is not locally integrable, but the distribution
\begin{equation}\label{Fourier-of-log}
T_{x_2}(\xi_1)=\frac{e^{-x_2|\xi_1|}-1}{|\xi_1|}  +  \left(\frac{1}{|\xi_1|} \right)_1 + 2\gamma\de(\xi_1) =\left[ \left(\frac{1}{|\xi_1|} \right)_1 + 2\gamma\de(\xi_1)\right] \cdot e^{-x_2|\xi_1|},
\end{equation}
where $\gamma$ is the Euler–Mascheroni constant, and the distribution $ \left(\frac{1}{|\xi_1|} \right)_1$ is defined by
\begin{align}\label{def-absx-inverse}
\langle \left(\frac{1}{|\xi_1|} \right)_1,f\rangle=\int_{|\xi_1|<1} \frac{f(\xi_1)-f(0)}{|\xi_1|} \,d\xi_1+\int_{|\xi_1|\geq 1}  \frac{f(\xi_1)}{|\xi_1|}\,d\xi_1.
\end{align}
See \cite[page 258]{Schwartz1966}.
There is a good discussion by Mark Viola
in Math Stack Exchange, see
\href{https://math.stackexchange.com/questions/1186803/fourier-transform-of-logx2a2}{this link}.
\hfill $\square$
\end{rem}

By differentiating the equation in Lemma \ref{lem21} with respect to $x_n$, we have
\begin{cor}
For $n\ge2$ and $x_{n}>0$, we have
\[
\mathcal{F}_{x'}^{-1}\left(-e^{-x_n|\xi'|}\right)=2(2\pi)^{\frac{n-1}{2}} \partial_nE(x),
\]
\[
\mathcal{F}_{x'}^{-1}\left(|\xi'|e^{-x_n|\xi'|}\right)=2(2\pi)^{\frac{n-1}{2}} \partial^2_nE(x),
\]
\[
\mathcal{F}_{x'}^{-1}\left(-|\xi'|^2e^{-x_n|\xi'|}\right)=2(2\pi)^{\frac{n-1}{2}} \partial^3_nE(x).
\]
\end{cor}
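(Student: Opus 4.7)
The plan is to obtain the three identities directly by differentiating the identities of Lemma \ref{lem21} with respect to $x_n$, as the parenthetical remark in the excerpt already suggests. The interchange of $\partial_{x_n}$ with $\mathcal{F}_{x'}^{-1}$ is the only thing to check, and it is justified by the observation that for every fixed $x_n>0$ the function $e^{-x_n|\xi'|}$ decays exponentially in $\xi'$, so that $|\xi'|^k e^{-x_n|\xi'|}$ is Schwartz in $\xi'$ and may be differentiated freely in $x_n$ under the integral by dominated convergence.

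For $n\ge 3$ I would begin from
\[
\mathcal{F}_{x'}^{-1}\!\left(\frac{e^{-x_n|\xi'|}}{|\xi'|}\right) = 2(2\pi)^{\frac{n-1}{2}} E(x)
\]
and apply $\partial_{x_n}$ successively. Each differentiation multiplies the Fourier-side symbol by $-|\xi'|$: one derivative cancels the $1/|\xi'|$ and yields the first identity; a second derivative produces the factor $|\xi'|$ and gives the second; a third derivative produces $-|\xi'|^2$ and gives the third. On the physical side the corresponding right-hand sides are $2(2\pi)^{(n-1)/2}$ times $\partial_n E$, $\partial_n^2 E$, and $\partial_n^3 E$ respectively.

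For $n=2$ the first identity of the corollary is literally the first identity of Lemma \ref{lem21}, so no work is needed there. For the other two $n=2$ cases I would differentiate that identity once and twice in $x_2$; the justification is the same exponential-decay argument applied to $|\xi_1|^k e^{-x_2|\xi_1|}$. I do not anticipate any real obstacle. The only subtlety worth a brief comment is that for $n=2$ the would-be starting kernel $1/|\xi_1|$ is only a distribution (see Remark \ref{rem22}), which is precisely the reason the $n=2$ case starts from the already-differentiated identity of Lemma \ref{lem21} involving the ordinary function $e^{-x_2|\xi_1|}$ rather than from $e^{-x_2|\xi_1|}/|\xi_1|$.
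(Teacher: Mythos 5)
Your proposal is correct and is precisely the paper's argument: the corollary is stated immediately after the remark that it follows by differentiating Lemma \ref{lem21} in $x_n$, and your careful separation of the $n=2$ case (starting from the already-differentiated identity because $1/|\xi_1|$ is only a distribution, cf.\ Remark \ref{rem22}) matches the paper's setup. The justification by exponential decay of $e^{-x_n|\xi'|}$ for fixed $x_n>0$ is the standard and intended one.
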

\begin{lem}\label{lem23}
For $n\geq 1$ and $x_n>0$, we have
\begin{equation*}
\mathcal{F}^{-1}_{x',t}\left(\frac{e^{-x_n \sqrt{|\xi'|^2+is}}}{\sqrt{|\xi'|^2+is}}\right)=2(2\pi)^{\frac{n}{2}}\Gamma(x,t).
\end{equation*}
Note that in our definition, $\mathrm{Re}(\sqrt{|\xi'|^2+is})>0$ no matter $s>0$ or $s<0$.

\end{lem}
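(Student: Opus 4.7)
The plan is to verify the identity by computing $\mathcal{F}_{x',t}\Gamma$ directly and matching it with $\frac{1}{2(2\pi)^{n/2}}$ times the right-hand side of the Fourier-side expression. This is cleaner than trying to invert the expression $\frac{e^{-x_n\sqrt{|\xi'|^2+is}}}{\sqrt{|\xi'|^2+is}}$ by contour methods.

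First I would split the heat kernel as $\Gamma(x,t)=\frac{1}{(4\pi t)^{n/2}}e^{-|x'|^2/(4t)}\,e^{-x_n^2/(4t)}$ for $t>0$, do the $(n-1)$-dimensional Gaussian integral in $x'$, and obtain
\[
\mathcal{F}_{x'}\Gamma(\xi',x_n,t)=\frac{1}{(2\pi)^{n/2}\sqrt{2t}}\,e^{-|\xi'|^2 t-x_n^2/(4t)}\mathcal{H}(t).
\]
Then I would apply $\mathcal{F}_t$, which after collecting constants reduces the problem to the classical integral
\[
I(A,B)=\int_0^{\infty}t^{-1/2}e^{-At-B/t}\dt=\sqrt{\pi/A}\,\, e^{-2\sqrt{AB}},
\]
with $A=|\xi'|^2+is$ and $B=x_n^2/4$, giving $I=\sqrt{\pi/(|\xi'|^2+is)}\,e^{-x_n\sqrt{|\xi'|^2+is}}$. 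Tracking the prefactors $(2\pi)^{-1/2}$ from $\mathcal{F}_t$, $(2\pi)^{-n/2}$ from $\mathcal{F}_{x'}$, and $1/\sqrt{2}$ from the $t^{-1/2}$ normalization collapses to $1/(2(2\pi)^{n/2})$, yielding the claim after one reciprocal application of Fourier inversion.

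The main obstacle is not computational but analytic: the formula $I(A,B)=\sqrt{\pi/A}\,e^{-2\sqrt{AB}}$ is elementary when $A>0$, but here $A=|\xi'|^2+is$ is complex, and both the convergence of the $t$-integral and the identification of the right branch of the square root need care. To handle this, I would first establish $I$ for real positive $A$ via the substitution $u=\sqrt{A}\sqrt{t}-\sqrt{B/t}$ (reducing to a Gaussian), then analytically continue in $A$ into the right half-plane $\mathrm{Re}\,A>0$. Absolute convergence of $\int_0^\infty t^{-1/2}e^{-At-B/t}\dt$ on this half-plane is automatic since $\mathrm{Re}\,A>0$, and both sides of the identity are holomorphic there, so the extension is unique. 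The branch of $\sqrt{|\xi'|^2+is}$ is then pinned down by the convention $\mathrm{Re}\sqrt{|\xi'|^2+is}>0$ stated in the lemma, which is exactly what makes the resulting exponential $e^{-x_n\sqrt{|\xi'|^2+is}}$ decay for $x_n>0$.

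Once the integral identity is secured, the remaining step is bookkeeping: put everything together to obtain $\mathcal{F}_{x',t}\Gamma(\xi',x_n,s)=\frac{1}{2(2\pi)^{n/2}}\,\frac{e^{-x_n\sqrt{|\xi'|^2+is}}}{\sqrt{|\xi'|^2+is}}$, and apply $\mathcal{F}^{-1}_{x',t}$ to both sides. Since $\Gamma(\cdot,\cdot)$ is a tempered distribution (indeed an $L^1_{\loc}$ function with Gaussian decay in $x'$ for fixed $t>0$), Fourier inversion is justified in the distributional sense and gives the stated equality.
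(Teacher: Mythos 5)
Your proof is correct but follows a genuinely different route from the paper's. The paper starts from the known full space--time Fourier transform $\mathcal{F}_{x,t}\Gamma(\xi,s)=\frac{1}{(2\pi)^{(n+1)/2}(|\xi|^2+is)}$ and obtains $\mathcal{F}_{x',t}\Gamma$ by inverting only the $\xi_n$-variable via the Residue Theorem, closing the contour in the upper half-plane (where $e^{i\xi_n x_n}$ decays for $x_n>0$) and picking up the single pole at $\xi_n=i\sqrt{|\xi'|^2+is}$. You instead compute $\mathcal{F}_{x',t}\Gamma$ from scratch: a Gaussian integral in $x'$, followed by the Laplace-type identity $\int_0^\infty t^{-1/2}e^{-At-B/t}\,dt=\sqrt{\pi/A}\,e^{-2\sqrt{AB}}$, extended from real $A>0$ to $\mathrm{Re}\,A>0$ by uniqueness of analytic continuation. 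Both routes are sound, and your bookkeeping is correct (the constants do collapse to $\frac{1}{2(2\pi)^{n/2}}$). The paper's argument is shorter once the Fourier transform of $\Gamma$ is taken as known, and the branch condition $\mathrm{Re}\sqrt{|\xi'|^2+is}>0$ is selected automatically by which pole lies in the upper half-plane; your argument avoids contour integration entirely and pins the branch via holomorphy instead, at the cost of justifying the continuation explicitly. One small point of care in your last paragraph: for fixed $x_n>0$, $\Gamma(x',x_n,t)$ is bounded and locally integrable in $(x',t)$, but its integral over $x'$ decays only like $t^{-1/2}$, so it is not globally $L^1$; the inversion therefore genuinely needs the tempered-distribution framework you invoke, and the phrase ``Gaussian decay'' (which refers to the $x'$-variable only) could mislead a reader into thinking absolute integrability in $(x',t)$ is available.
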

\begin{proof}
It is well known that the Fourier transform of heat kernel is 
\[\mathcal{F}_{x,t}\Big(\Gamma(x,t)\Big)=\frac{1}{(2\pi)^{\frac{n+1}{2}}(|\xi|^2+is)}.
\]
Hence,
\begin{align}
\mathcal{F}_{x',t}\Big(\Gamma(x,t)\Big)&=\mathcal{F}_{x_n}^{-1}\Big(\frac{1}{(2\pi)^{\frac{n+1}{2}}(|\xi|^2+is)}\Big)\notag\\
&=\frac{1}{\sqrt{2\pi}}\int_{\mathbb{R}}\frac{1}{(2\pi)^{\frac{n+1}{2}}(|\xi|^2+is)}e^{i\xi_nx_n}\dxi_n\notag\\
&=\frac{1}{(2\pi)^{\frac{n+2}{2}}}\int_{\mathbb{R}}\frac{1}{(\xi_n+i\sqrt{|\xi'|^2+is})(\xi_n-i\sqrt{|\xi'|^2+is})}e^{i\xi_nx_n}\dxi_n \label{2.2}.
\end{align}
We can use Residue Theorem to calculate the integral in \eqref{2.2}. Since $x_n>0$, $e^{i\xi_nx_n}$ is bounded when $\xi_n$ is in the upper half of the complex plane. There is only one singular point $i\sqrt{|\xi'|^2+is}$ in the upper half of the complex plane, so
\begin{align*}
\mathcal{F}_{x',t}\Big(\Gamma(x,t)\Big)&=2\pi i\frac{1}{(2\pi)^{\frac{n+2}{2}}} \cdot\frac{1}{i\sqrt{|\xi'|^2+is}+i\sqrt{|\xi'|^2+is}}e^{i(i\sqrt{|\xi'|^2+is})x_n}\\
&=\frac{1}{2(2\pi)^{\frac{n}{2}}} \cdot\frac{e^{-x_n\sqrt{|\xi'|^2+is}}}{\sqrt{|\xi'|^2+is}}. \qedhere
\end{align*}
\end{proof}

By differentiating the equation in Lemma \ref{lem23} with respect to $x_n$, we have
\begin{cor}\label{lem24}
For $n\ge2$ and $x_{n}>0$, we have
\[
\mathcal{F}_{x',t}^{-1}\left(-e^{-x_n\sqrt{|\xi'|^2+is}}\right)=2(2\pi)^{\frac{n}{2}} \partial_n\Gamma(x,t),
\]
\[
\mathcal{F}_{x',t}^{-1}\left(\sqrt{|\xi'|^2+is}e^{-x_n\sqrt{|\xi'|^2+is}}\right)=2(2\pi)^{\frac{n}{2}} \partial^2_n\Gamma(x,t),
\]
\[
\mathcal{F}_{x',t}^{-1}\left(-(|\xi'|^2+is)e^{-x_n\sqrt{|\xi'|^2+is}}\right)=2(2\pi)^{\frac{n}{2}} \partial^3_n\Gamma(x,t).
\]
\end{cor}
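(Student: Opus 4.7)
The plan is to obtain each of the three identities directly from Lemma~\ref{lem23} by differentiating both sides with respect to $x_n$. On the symbol side, a short computation gives
\[
\partial_{x_n}\!\left(\frac{e^{-x_n\sqrt{|\xi'|^2+is}}}{\sqrt{|\xi'|^2+is}}\right) = -e^{-x_n\sqrt{|\xi'|^2+is}},
\]
\[
\partial_{x_n}^2\!\left(\frac{e^{-x_n\sqrt{|\xi'|^2+is}}}{\sqrt{|\xi'|^2+is}}\right) = \sqrt{|\xi'|^2+is}\;e^{-x_n\sqrt{|\xi'|^2+is}},
\]
\[
\partial_{x_n}^3\!\left(\frac{e^{-x_n\sqrt{|\xi'|^2+is}}}{\sqrt{|\xi'|^2+is}}\right) = -(|\xi'|^2+is)\,e^{-x_n\sqrt{|\xi'|^2+is}},
\]
which are exactly the three symbols in the statement. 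On the right side, $\Gamma(x,t)$ is smooth for $x_n>0$, so differentiating in $x_n$ produces $2(2\pi)^{n/2}\partial_n^k\Gamma(x,t)$ for $k=1,2,3$. Thus the identities follow once differentiation is legitimately moved through $\mathcal{F}_{x',t}^{-1}$.

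The only thing to check is the interchange of $\partial_{x_n}$ with the inverse Fourier transform. Because $x_n>0$ and, by the branch convention of Lemma~\ref{lem23}, $\mathrm{Re}\sqrt{|\xi'|^2+is}\gtrsim |\xi'|+|s|^{1/2}$, the factor $e^{-x_n\sqrt{|\xi'|^2+is}}$ has exponential decay in both $|\xi'|$ and $|s|^{1/2}$. This dominates any polynomial prefactor such as $|\xi'|^2+|s|$, so for every fixed $x_n>0$ each of the three symbols lies in the Schwartz class (and in particular in $L^1$) in the $(\xi',s)$ variables, with derivatives in $x_n$ dominated uniformly on compact subsets $\{x_n\ge\delta>0\}$. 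Dominated convergence then justifies moving $\partial_{x_n}$ inside $\mathcal{F}_{x',t}^{-1}$, and the three identities follow.

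The step that requires the most care is this mild verification of the exponential decay bound on $\mathrm{Re}\sqrt{|\xi'|^2+is}$, but it is purely routine once one fixes the principal branch as in Lemma~\ref{lem23}. As an alternative route, one could also redo the residue calculation of Lemma~\ref{lem23} with the numerators $1$, $|\xi|^2+is$, and $(|\xi|^2+is)\sqrt{|\xi'|^2+is}$ corresponding to the three symbols; this would reproduce $\partial_n^k\Gamma$ directly without appealing to differentiation under the integral. I expect no real obstacle beyond these bookkeeping issues.
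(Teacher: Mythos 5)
Your proposal is correct and follows exactly the paper's approach: the paper derives the corollary in one sentence by differentiating the identity of Lemma~\ref{lem23} in $x_n$. Your additional verification that differentiation can be interchanged with $\mathcal{F}_{x',t}^{-1}$, using the exponential decay coming from $\mathrm{Re}\sqrt{|\xi'|^2+is}>0$, is a reasonable fleshing-out of what the paper leaves implicit.
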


The following lemma will be used in the pointwise estimate of the Poisson kernel. For $w,z\in\R$, we denote
\begin{equation}\label{Awz}
A(x^{\prime},w,z,t)=\int_{\R^{n-1}}E(x^{\prime}-y^{\prime},w)\Gamma(y^{\prime},z,t)\dy^{\prime}.
\end{equation} 
For $n=3$, it is defined in \cite{Solonnikov1964} for $w=0$ or $z=0$, whose derivative estimates are given in \cite[(62, 63)]{Solonnikov1964} for $n=3$ and \cite[(2.11), (2.12)]{Kang2022} for general $n\ge2$.
Now we give similar estimates for our generalized function $A$.

\begin{lem}\label{lemAA}
For $n\geq2$ and integers $m,l,j,k\geq0$ with $l+j+n\geq 3$,
\begin{equation}\label{AA}
|\partial_t^m \partial_{x^{\prime}}^{l}\partial_w^{j}\partial_z^{k} A(x^{\prime},w,z,t)|\lesssim \frac{1}{\left(|x^{\prime}|^2+w^2+z^2+t\right)^{\frac{l+j+n-2}{2}}\left(z^2+t\right)^{\frac{k+1}{2}+m}}e^{-\frac{z^2}{10t}}.
\end{equation}
\end{lem}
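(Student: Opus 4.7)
The plan is to differentiate under the integral, apply pointwise bounds to $\partial_{x'}^l\partial_w^j E$ and $\partial_t^m\partial_z^k\Gamma$, and then estimate the resulting convolution in $y'$. The derivative exchange is justified by the smoothness of $E$ off the origin together with the Schwartz decay of $\Gamma$ in $y'$, giving
\[
\partial_t^m\partial_{x'}^l \partial_w^j \partial_z^k A(x',w,z,t)
= \int_{\R^{n-1}} \partial_{x'}^l \partial_w^j E(x'-y', w)\, \partial_t^m \partial_z^k \Gamma(y', z, t)\dy'.
\]
I would use the homogeneity estimate $|\partial_{x'}^l \partial_w^j E(x', w)| \lesssim (|x'|^2+w^2)^{-(l+j+n-2)/2}$, valid under the assumption $l+j+n\geq 3$ that excludes the logarithmic case $n=2$, $l=j=0$, together with the Gaussian bound $|\partial_t^m \partial_z^k \Gamma(y', z, t)| \lesssim t^{-(n+2m+k)/2} e^{-(|y'|^2+z^2)/(8t)}$.

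A key algebraic step produces the factor $(z^2+t)^{-(k+1)/2-m}$ on the right-hand side from the looser factor $t^{-(k+1)/2-m}$ that comes out of the heat kernel derivatives. This is the elementary inequality
\[
t^{-a} e^{-z^2/(8t)} \lesssim (z^2+t)^{-a} e^{-z^2/(10t)},\qquad a\geq 0,
\]
proved by splitting into $z^2\leq t$ (trivial) and $z^2>t$ (where $X^a\lesssim e^{X/40}$ at $X=z^2/t$). Combined with the analogous observation $(1+z^2/(|x'|^2+w^2+t))^s e^{-z^2/(40t)}\lesssim 1$, I split $e^{-z^2/(8t)}=e^{-z^2/(10t)}e^{-z^2/(40t)}$ and use the excess $e^{-z^2/(40t)}$ to convert $t^{-m-(k+1)/2}(|x'|^2+w^2+t)^{-s}$ into $(z^2+t)^{-m-(k+1)/2}(|x'|^2+w^2+z^2+t)^{-s}$. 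After this bookkeeping, the problem reduces to the purely spatial estimate
\[
J(x',w,t):=\int_{\R^{n-1}} \frac{e^{-|y'|^2/(8t)}}{(|x'-y'|^2+w^2)^{(l+j+n-2)/2}}\dy' \lesssim \frac{t^{(n-1)/2}}{(|x'|^2+w^2+t)^{(l+j+n-2)/2}}.
\]

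I would prove this spatial bound by a case analysis based on whether $|x'|^2+w^2$ is larger or smaller than $t$. In the far-field regime $|x'|^2+w^2\gtrsim t$, I split $y'$ integration into the ball $\{|y'|\leq \tfrac12(|x'|^2+w^2)^{1/2}\}$, where the singular factor is essentially $(|x'|^2+w^2)^{-s}$ and the Gaussian integrates to $t^{(n-1)/2}$; and its complement, where $e^{-|y'|^2/(8t)}\leq e^{-c(|x'|^2+w^2)/t}$ provides super-polynomial decay that beats any algebraic singularity. In the near-field regime $|x'|^2+w^2\lesssim t$, the parabolic rescaling $y'=\sqrt{t}v$ reduces $J$ to a universal convolution integral of the form $\int(|v|^2+(w/\sqrt{t})^2)^{-s}e^{-|v|^2/8}\,dv$, which matches the target after undoing the rescaling.

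The main technical obstacle is the case $l+j\geq 2$ with $w$ small (in particular $w=0$), since the singular factor $(|y'-x'|^2)^{-(l+j+n-2)/2}$ is then not locally integrable in $\R^{n-1}$ and the pointwise bound on $E^{(l,j)}$ overshoots. To handle this, before applying the pointwise estimate I would integrate by parts, using $\partial_{x'}E(x'-y',w)=-\partial_{y'}E(x'-y',w)$, to transfer excess $\partial_{x'}$ derivatives off $E$ onto the heat kernel. Each such transfer lowers the singular exponent on $E$ by one and contributes an extra $t^{-1/2}$ factor through the Gaussian derivative; these factors are absorbed into the $(z^2+t)^{-\cdot}$ powers via the Gaussian algebra of the second paragraph. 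Iterating until $l+j\leq 1$ renders the remaining singularity locally integrable, and the case analysis closes.
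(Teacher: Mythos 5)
Your direct convolution approach runs into a genuine gap at the step where you reduce to the spatial estimate
\begin{equation*}
J(x',w,t)=\int_{\R^{n-1}}\frac{e^{-|y'|^2/(8t)}}{\left(|x'-y'|^2+w^2\right)^{(l+j+n-2)/2}}\dy'\lesssim\frac{t^{(n-1)/2}}{\left(|x'|^2+w^2+t\right)^{(l+j+n-2)/2}}.
\end{equation*}
This bound is simply false whenever $l+j\geq 1$ and $w$ is small (in particular $w=0$): near $y'=x'$ the integrand behaves like $|x'-y'|^{-(l+j+n-2)}$, which fails to be locally integrable in $\R^{n-1}$ exactly when $l+j+n-2\geq n-1$, i.e., $l+j\geq 1$ (the case $l+j=1$ already gives a logarithmic divergence). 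The true estimate only holds because of cancellations that your homogeneous pointwise bound on $\partial_{x'}^l\partial_w^j E$ discards: for instance $\partial_w E(x',w)=c\,w\,(|x'|^2+w^2)^{-n/2}$ vanishes at $w=0$ and carries an extra factor of $w$, and the tangential derivatives $\partial_{x'}^l E(\cdot,0)$ have mean-zero singular-integral structure. Your proposed fix, integrating by parts via $\partial_{x'}E(x'-y',w)=-\partial_{y'}E(x'-y',w)$, only removes $\partial_{x'}$ derivatives and does nothing for $\partial_w^j E$; moreover even after iterating you would have to reach $l+j=0$, not $l+j\leq 1$, to make the singularity integrable, and that target is unreachable as long as any $\partial_w$ remains.

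The paper avoids this entirely by exploiting the tensor structure of the heat kernel: since $\Gamma(y',z,t)=\Gamma_{n-1}(y',t)\,\Gamma_1(z,t)$, the $z$-dependence factors out of the $y'$-integral and one has the exact identity $A(x',w,z,t)=A(x',w,0,t)\,e^{-z^2/(4t)}$. The estimate on $A(x',w,0,t)$ and its derivatives is then quoted from \cite[(2.11)]{Kang2022}, and the $z$- and $t$-derivatives falling on $e^{-z^2/(4t)}$ are absorbed via the simple inequality $\frac{1}{a^2+t}e^{-cz^2/t}\lesssim\frac{1}{a^2+z^2+t}$, which is the same Gaussian algebra you identified in your second paragraph. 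If you want a self-contained direct proof, you would at least need to establish the $A(x',w,0,t)$ bound from scratch, which again requires keeping track of the cancellations in $\partial_{x'}^l\partial_w^j E$; the pointwise-bound-and-convolve shortcut as written cannot be closed.
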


This lemma follows from $A(x^{\prime},w,z,t)=A(x^{\prime},w,0,t) e^{-\frac{z^2}{4t}}$, the estimate of $A(x^{\prime},w,0,t)$ in \cite[(2.11)]{Kang2022}, and the inequalities
\begin{equation}
\frac1{a^2+t} e^{-c z^2/t}  \lec \frac1{a^2+t}\cdot \frac {t}{z^2+t} \lec \frac 1{a^2+z^2+t}.
\end{equation}

The next two lemmas will be useful to generate the blow-up in the proof of Theorem \ref{thm_main2}. 

\begin{lem}\label{lem210}
For $|x'|> 0$, we have
\begin{equation}\label{2.7}
\mathrm{p.v.}\int_{\R^{n-1}} \frac{y_1}{|y'|^{n}} e^{-\frac{|x'-y'|^2}{4t}}\dy'= (4\pi t)^{\frac{n-1}{2}} \frac{x_1}{|x'|^{n}}+Err(x',t)\frac{t^{\frac{n}{2}}}{|x'|^{n}},
\end{equation}
where $|Err(x',t)|\lesssim 1$. The first term dominates the second when $\sqrt t \ll |x_1|$.
\end{lem}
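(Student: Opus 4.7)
\noindent\emph{Plan.} My plan is to rescale and then estimate the resulting dimensionless integral by approximation at the Gaussian's peak. Substituting $y'=\sqrt{t}\,z'$ and setting $\xi':=x'/\sqrt{t}$, the left-hand side of \eqref{2.7} becomes $F(\xi')$, where
\[
F(\xi') \;:=\; \mathrm{p.v.}\!\!\int_{\R^{n-1}}\frac{z_1}{|z'|^n}\,e^{-|\xi'-z'|^2/4}\dz'.
\]
A direct comparison of powers of $t$ reduces \eqref{2.7} to the dimensionless bound
\[
|\xi'|^n F(\xi') \;=\; (4\pi)^{(n-1)/2}\,\xi_1 + Err(\xi'),\qquad |Err(\xi')|\lesssim 1\text{ uniformly on }\R^{n-1}\setminus\{0\}.
\]

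To remove the principal value I first integrate by parts, using $z_1/|z'|^n = -\frac{1}{n-2}\,\partial_{z_1}|z'|^{-(n-2)}$ when $n\ge 3$ (note that $|z'|^{-(n-2)}$ is locally integrable in $\R^{n-1}$ since $n-2<n-1$), and $1/z_1 = \partial_{z_1}\log|z_1|$ when $n=2$. For $n\ge 3$ this yields
\[
F(\xi') \;=\; \frac{1}{2(n-2)}\int_{\R^{n-1}} |z'|^{-(n-2)}\,(\xi_1-z_1)\,e^{-|\xi'-z'|^2/4}\dz',
\]
an absolutely convergent Lebesgue integral (with the analogous formula involving $\log|z_1|$ for $n=2$). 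For $|\xi'|\le 1$, continuous dependence on $\xi'$ yields $F\in L^\infty_{\loc}$, hence $|Err(\xi')|\le |\xi'|^n\,\|F\|_{L^\infty(|\xi'|\le 1)}+(4\pi)^{(n-1)/2}|\xi_1|\le C$ in this regime.

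For $|\xi'|\ge 1$ the Gaussian concentrates near $z'=\xi'$, safely away from the origin. I split the post-IBP integral into $\int_A+\int_B$ with $A=\{|z'-\xi'|<|\xi'|/2\}$ and $B=\R^{n-1}\setminus A$. On $B$ the bound $e^{-|z'-\xi'|^2/4}\le e^{-|\xi'|^2/32}\,e^{-|z'-\xi'|^2/8}$ produces an exponentially small contribution (against the now-$L^1$ integrand $|z'|^{-(n-2)}|\xi_1-z_1|\,e^{-|z'-\xi'|^2/8}$). On $A$ we have $|z'|\ge|\xi'|/2$, so the non-Gaussian factor is smooth, and a second-order Taylor expansion of $|z'|^{-(n-2)}$ around $z'=\xi'$ decomposes the integrand: the zeroth-order term vanishes against the odd factor $(\xi_1-z_1)$ by the symmetry of $A$ and the Gaussian about $\xi'$; the first-order term, using $\partial_{z_1}|z'|^{-(n-2)}|_{z'=\xi'}=-(n-2)\xi_1/|\xi'|^n$ together with $\int_{\R^{n-1}}(z_1-\xi_1)^2 e^{-|z'-\xi'|^2/4}\dz' = 2(4\pi)^{(n-1)/2}$ (up to an $e^{-|\xi'|^2/16}$ truncation error), produces exactly $(4\pi)^{(n-1)/2}\xi_1/|\xi'|^n$ after combining with the prefactor $\frac{1}{2(n-2)}$; and the second-order remainder is bounded by $\sup_A|\nabla^2|z'|^{-(n-2)}|\cdot\int|z'-\xi'|^3 e^{-|z'-\xi'|^2/4}\dz'\lesssim|\xi'|^{-n}$, using $|\nabla^2|z'|^{-(n-2)}|\lesssim|z'|^{-n}\lesssim|\xi'|^{-n}$ on $A$. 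Collecting, $F(\xi') = (4\pi)^{(n-1)/2}\xi_1/|\xi'|^n + O(|\xi'|^{-n})$, so $|Err(\xi')|\lesssim 1$ in this regime as well.

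The main technical obstacle is cleanly handling the principal value at the kernel's singularity; the integration by parts resolves it uniformly for $n\ge 2$ and converts the claim into a routine estimate on the Gaussian convolution of a locally integrable function, after which the rest is standard Taylor/Gaussian bookkeeping. The $n=2$ case is handled in exactly the same way, with $\log|z_1|$ playing the role of $|z'|^{-(n-2)}$ and the corresponding Hessian bound $|\partial_{z_1}^2\log|z_1||=1/|z_1|^2\lesssim 1/|\xi'|^n$ on $A$.
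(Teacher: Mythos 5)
Your proof is correct but takes a genuinely different route from the paper's. The paper partitions $\R^{n-1}$ into three regions ($D_1$ near $x'$, $D_2$ near the origin, $D_3$ the rest), uses the cancellation $\mathrm{p.v.}\int_{D_2} y_1|y'|^{-n}\,dy'=0$ to neutralize the singularity, exploits the Gaussian tail on $D_3$, and extracts the main term by further decomposing the integral over $D_1$ after translating to $x'$. You instead first nondimensionalize via $y'=\sqrt{t}\,z'$, then remove the principal value once and for all by integrating by parts against the locally integrable primitive $|z'|^{-(n-2)}$ (resp.\ $\log|z_1|$ for $n=2$), converting the claim into a bound on a Gaussian convolution of an $L^1_{\loc}$ function; the dichotomy $|\xi'|\le 1$ vs.\ $|\xi'|\ge 1$ then handles the two scales, with the leading term emerging from the second-moment Gaussian integral applied to the linear Taylor coefficient of $|z'|^{-(n-2)}$ at $z'=\xi'$. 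Your route trades the paper's elementary region-by-region bookkeeping for a single structural step (IBP to kill the p.v.) plus a Taylor expansion; the payoff is that the constant $(4\pi)^{(n-1)/2}$ and the error both fall out of a single Gaussian moment computation, and the rescaling makes the dimensional analysis transparent from the outset. Both proofs produce the same conclusion, and your argument handles $n=2$ and $n\ge 3$ within the same framework, as does the paper's. One minor remark: to make the $|\xi'|\ge 1$ case fully rigorous you should note (as you can) that the contribution from $B$ is bounded by $e^{-c|\xi'|^2}$ times a quantity that grows at most polynomially in $|\xi'|$, so that the product with $|\xi'|^n$ is still $O(1)$; your sketch gestures at this but doesn't spell it out.
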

\begin{proof}
The proof is similar to \cite[Lemma 3.2]{Chang2023}. We divide $\R^{n-1}$ to three disjoint sets $D_1$, $D_2$ and $D_3$, which are defined by 
\[D_1=\{y'\in \R^{n-1}: |x'-y'|\leq \frac{|x'|}{10}\}, \ \ \ D_2=\{y'\in \R^{n-1}: |y'|\leq \frac{|x'|}{10}\},\]
and $D_3=\R^{n-1}\setminus (D_1\cup D_2)$. We then split the integral into three terms as follows:
\[\text{p.v.}\int_{\R^{n-1}} \frac{y_1}{|y'|^{n}} e^{-\frac{|x'-y'|^2}{4t}}\dy'=\int_{D_1}\cdots+\text{p.v.}\int_{D_2}\cdots+\int_{D_3}\cdots\equiv G_1+G_2+G_3.\]
Since $\text{p.v.}\int_{D_2}\frac{y_1}{|y'|^n}\dy'=0$, we have 
\[G_2=\int_{D_2}\frac{y_1}{|y'|^n}\left(e^{-\frac{|x'-y'|^2}{4t}}-e^{-\frac{|x'|^2}{4t}}\right)\dy'.\] 
Thus, using $|\nabla e^{-\frac{|x'|^2}{4t}}|\lesssim \frac{|x'|}{t}e^{-\frac{|x'|^2}{4t}}$, we have 
\begin{align*}
|G_2|&\lesssim \frac{|x'|}{t}e^{-\frac{|x'|^2}{4t}}\int_{D_2}\frac{1}{|y'|^{n-2}}\dy'\lesssim \frac{|x'|^2}{t}e^{-\frac{|x'|^2}{4t}}\lesssim \frac{t^{\frac{n}{2}}}{|x'|^{n}}.
\end{align*}
Since $\int_{|y'|>a}e^{-|y'|^2}\dy'\leq e^{-\frac{1}{2}a^2}\int_{|y'|>a}e^{-\frac{1}{2}|y'|^2}\dy'\lesssim e^{-\frac{1}{2}a^2}$, $a>0$, we have
\begin{equation*}
|G_3|\lesssim \frac{1}{|x'|^{n-1}}\int_{|x'-y'|\geq \frac{|x'|}{10}}e^{-\frac{|x'-y'|^2}{4t}}\dy'\lesssim \frac{1}{|x'|^{n-1}}t^{\frac{n-1}{2}}e^{-\frac{|x'|^2}{800t}}\lesssim \frac{t^{\frac{n}{2}}}{|x'|^{n}}.
\end{equation*}
We decompose $G_1$ in the following way, 
\begin{align*}
G_1&=\int_{|y'|\leq \frac{|x'|}{10}}\frac{x_1-y_1}{|x'-y'|^{n}}e^{-\frac{|y'|^2}{4t}}\dy'
=\int_{|y'|\leq \frac{|x'|}{10}}\left(\frac{x_1-y_1}{|x'-y'|^{n}}-\frac{x_1}{|x'|^{n}}\right)e^{-\frac{|y'|^2}{4t}}\dy'\\
& \ \ \ +\int_{\R^{n-1}}\frac{x_1}{|x'|^{n}}e^{-\frac{|y'|^2}{4t}}\dy'-\int_{|y'|\geq \frac{|x'|}{10}}\frac{x_1}{|x'|^{n}}e^{-\frac{|y'|^2}{4t}}\dy'= G_{11}+G_{12}+G_{13}.
\end{align*}
It's not hard to get $G_{12}=(4\pi t)^{\frac{n-1}{2}}\frac{x_1}{|x'|^{n}}$. Similar to $G_3$, we have 
\[|G_{13}|\lesssim \frac{1}{|x'|^{n-1}}t^{\frac{n-1}{2}}e^{-\frac{|x'|^2}{800t}}\lesssim \frac{t^{\frac{n}{2}}}{|x'|^{n}}.\]
Since $|\nabla \frac{x_1}{|x'|^{n}}|\lesssim \frac{1}{|x'|^{n}}$, we have
\[|G_{11}|\lesssim \frac{1}{|x'|^{n}} \int_{|y'|\leq \frac{|x'|}{10}}|y'|e^{-\frac{|y'|^2}{4t}}\dy'
\lesssim \frac{1}{|x'|^{n}}t^{\frac{n}{2}}\int_{\R^{n-1}}|z'|e^{-|z'|^2}\dz'\lesssim \frac{t^{\frac{n}{2}}}{|x'|^{n}}.\]
Combining the above estimates, we arrive at \eqref{2.7}.
\end{proof}

The following lemma is a generalized version of \cite[(4.14)]{Chang2023} and plays a crucial role in the proof of Theorem \ref{thm_main2}. Our lemma establishes both directions of the inequalities, while \cite[(4.14)]{Chang2023} only establishes the first inequality of \eqref{ee}, though only the first inequality is used later. Also our proof is more elementary and direct, without usage of the trace theorem of anisotropic Sobolev spaces.
Recall that for function $u$ in homogeneous Besov space $\dot{B}^s_{q,r}$ (see \cite[Definition 2.15]{Bahouri2011}), its norm is defined by
\[\|u\|_{\dot{B}^s_{q,r}}=\bigg(\textstyle \sum_{j\in \mathbb{Z}}2^{rjs}\|\Delta_j u\|^r_{L^q}\bigg)^{\frac{1}{r}},
\]
where $\Delta_j$ is the Littlewood-Paley operator.

\begin{lem}\label{Besov est}
Let $f(x,t)=\int_{\R}\Gamma_1(x,t-s)g(s)\ds$, $x>0$, $t\in \R$, where $\Gamma_1$ is the 1D heat kernel. We have that for $1\leq q<\infty$,
\begin{equation}\label{ee}
\norm{g}_{ \dot{B}^{\frac 12-\frac 1{2q}}_{q,q}(\R)} \lesssim 
\norm{\partial_x^2 f}_{L^q(\R_+ \times \R)} \lesssim \norm{g}_{ \dot{B}^{\frac 12-\frac 1{2q}}_{q,q}(\R)}.
\end{equation}
\end{lem}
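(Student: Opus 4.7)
The plan is to reduce the claim to a $\Psi$-version of the Calderón characterization of $\dot B^{s}_{q,q}$. Observe that $\partial_x^2 f(x,t) = (K_x *_t g)(t)$, where $K_x(t) := \partial_x^2 \Gamma_1(x,t) = \partial_t \Gamma_1(x,t)$ for $x>0$ (the two agree by the heat equation, with no distributional contribution at $t=0$ since $\Gamma_1(x,0^+)=0$). Explicitly,
\[
K_x(t) = \frac{\mathcal{H}(t)}{\sqrt{4\pi t}}\Big(\frac{x^2}{4t^2}-\frac{1}{2t}\Big) e^{-x^2/(4t)} = x^{-3}\Psi(t/x^2),
\]
where $\Psi := K_1$ is smooth on $\R$ (extended by $0$ for $t\leq 0$), decays polynomially at $+\infty$, has $\int \Psi = 0$ (it is the $t$-derivative of a function vanishing at $0$ and $\infty$), and has Fourier transform $\hat\Psi(\tau) = c\sqrt{i\tau}\,e^{-\sqrt{i\tau}}$ nonvanishing for $\tau\ne 0$. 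Writing $\Psi_y(t) := y^{-1}\Psi(t/y)$ for the $L^1$-preserving dilation, $K_x = x^{-1}\Psi_{x^2}$, so the substitution $y = x^2$ gives
\[
\|\partial_x^2 f\|_{L^q(\R_+\times\R)}^q = \tfrac{1}{2}\int_0^\infty y^{-sq-1}\|\Psi_y *g\|_{L^q_t}^q\, dy, \qquad s = \tfrac{1}{2}-\tfrac{1}{2q},
\]
since $(q+1)/2 = sq + 1$.

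For the upper bound $\|\partial_x^2 f\|_{L^q}\lesssim \|g\|_{\dot B^s_{q,q}}$, I would exploit the mean-zero property of $K_x$ to write $K_x * g(t) = \int K_x(\sigma)[g(t-\sigma)-g(t)]\,d\sigma$ and then apply Hölder in $\sigma$ using $\|K_x\|_{L^1}\lesssim x^{-1}$:
\[
|K_x * g(t)|^q \lesssim x^{-(q-1)}\int |K_x(\sigma)|\,|g(t-\sigma)-g(t)|^q\, d\sigma.
\]
Integrating in $t$ and $x$ and switching orders, the explicit scaling gives $\int_0^\infty x^{-(q-1)}|K_x(\sigma)|\, dx = c_\Psi\,\sigma^{-(q+1)/2}$ for $\sigma>0$ (and $0$ for $\sigma\leq 0$, since $\Psi$ vanishes there); the remaining double integral is precisely (a constant multiple of) the Gagliardo-Slobodeckij seminorm of $\dot B^s_{q,q}(\R)$, which is finite since $s = 1/2 - 1/(2q) \in (0,1/2)$ for $q>1$.

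For the lower bound $\|g\|_{\dot B^s_{q,q}}\lesssim \|\partial_x^2 f\|_{L^q}$ I would apply the Calderón reproducing formula. Since $\hat\Psi$ vanishes only at $\tau = 0$, one constructs an auxiliary smooth $\Phi$ with $\int\Phi = 0$ and $\int_0^\infty \hat\Phi(y\tau)\,\hat\Psi(y\tau)\,\frac{dy}{y}\equiv 1$, yielding the reconstruction $g = \int_0^\infty \Phi_y * (\Psi_y * g)\,\frac{dy}{y}$. A Littlewood-Paley decomposition of $g$ in $t$ then reduces matters to the dyadic pieces $\Delta_j g$ of frequency $|\tau|\sim 2^j$: the multiplier $\hat\Psi(y\tau)$ concentrates the $y$-integration near $2^{-j}$ with unit magnitude, producing the dyadic equivalence $\int_0^\infty y^{-sq-1}\|\Psi_y *\Delta_j g\|^q \,dy \sim 2^{jsq}\|\Delta_j g\|_{L^q}^q$, which sums to $\|g\|_{\dot B^s_{q,q}}^q$. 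The main obstacle is to close this lower inequality without invoking the anisotropic Sobolev trace theorem used in \cite{Chang2023}; a cleaner alternative is duality, where the adjoint $L^*$ of the map $g\mapsto \partial_x^2 f$ has a kernel with the same mean-zero/scaling structure, so the upper-bound argument applied with the conjugate exponent $q'$ gives $\|L^*h\|_{\dot B^{-s}_{q',q'}}\lesssim \|h\|_{L^{q'}}$, from which the lower inequality follows via the duality $(\dot B^s_{q,q})^* = \dot B^{-s}_{q',q'}$.
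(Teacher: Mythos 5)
Your reduction of $\|\partial_x^2 f\|_{L^q(\R_+\times\R)}^q$ to the scaled form $\tfrac12\int_0^\infty y^{-sq-1}\|\Psi_y*g\|_{L^q_t}^q\,dy$ with $\Psi_y=y^{-1}\Psi(\cdot/y)$, $s=\tfrac12-\tfrac1{2q}$, is correct and is a nice way to expose the scaling. However both halves of the argument as written have genuine gaps.

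For the upper bound, the constant $c_\Psi$ in your scaling identity is finite only for $q<2$. From $\Psi(u)=\partial_u\Gamma_1(1,u)=\tfrac{1}{\sqrt{4\pi u}}\bigl(\tfrac{1}{4u^2}-\tfrac{1}{2u}\bigr)e^{-1/(4u)}$ one has $|\Psi(u)|\sim c\,u^{-3/2}$ as $u\to+\infty$, so
\[
c_\Psi=\tfrac12\int_0^\infty u^{(q-1)/2}|\Psi(u)|\,du
\]
has an integrand behaving like $u^{(q-4)/2}$ at infinity, which is integrable only when $q<2$. The problem is that the pointwise H\"older/Jensen step replaces $\Psi$ by $|\Psi|$ and therefore discards exactly the cancellation from $\int\Psi=0$ that the mean-zero trick was meant to exploit. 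A kernel with only $t^{-3/2}$ tail decay and a single vanishing moment does \emph{not} dominate the Gagliardo--Slobodeckij integrand pointwise after the crude H\"older; you would at least need to split the convolution into $|\sigma|\lesssim y$ and $|\sigma|\gg y$ and keep the cancellation alive, or retreat to a Littlewood--Paley/square-function argument. (The paper's proof is the latter: it establishes $\|\Delta_j\partial_x^2 f\|_{L^q}\lesssim 2^{j/2}e^{-cx2^{j/2}}\|\Delta_j g\|_{L^q}$ and sums with a H\"older trick in $j$ and $x$.) Also, for $q=1$ the Gagliardo seminorm with $s=0$ does not characterize $\dot B^0_{1,1}$, so the route would miss the endpoint even if $c_\Psi$ were finite.

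For the lower bound, the duality alternative does not close. Dualizing $L:\dot B^s_{q,q}\to L^q$ bounded yields that $L^*:L^{q'}\to \dot B^{-s}_{q',q'}$ is bounded --- but this is itself just the dual of the upper bound, not the inequality $\|g\|_{\dot B^s_{q,q}}\lesssim\|Lg\|_{L^q}$. To obtain the lower bound from duality you would need $L^*$ to map $L^{q'}$ \emph{onto} $\dot B^{-s}_{q',q'}$ with a bounded right inverse, i.e.\ for every test functional $\phi$ to produce $h$ with $L^*h=\phi$ and $\|h\|_{L^{q'}}\lesssim\|\phi\|_{\dot B^{-s}_{q',q'}}$; that surjectivity is precisely a reproducing formula and is not free. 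Your Calder\'on reproducing-formula route is the sound one, and in Fourier disguise it is exactly what the paper does: the identity $\hat g(s)=4\int_0^\infty(2x\sqrt{is})\,e^{-x\sqrt{is}}\,\partial_x^2\hat f(x,s)\,dx$ is the Calder\'on reconstruction with a companion kernel $\Phi$ satisfying $\hat\Phi(\tau)\propto\sqrt{i\tau}\,e^{-\sqrt{i\tau}}$, after which the same Littlewood--Paley summation as in the upper bound closes the estimate. So you would want to carry out that reproducing-formula computation rather than appeal to duality.
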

\begin{proof}
Notice that
\begin{equation}\label{e1}
\mathcal{F}_{t} f(x,s)=\frac{1}{2\sqrt{is}}e^{-x\sqrt{is}}\cdot\mathcal{F}_t g(s).
\end{equation}
For fixed $x$, we have
\begin{equation}\label{e1b}
\|\partial_x^2 f\|_{L^{q}(\R)}\lesssim  \Sigma_{j\in \mathbb{Z}}\ \|\Delta_j \partial_x^2 f\|_{L^{q}(\R)}\lesssim \Sigma_{j\in \mathbb{Z}}\ 2^{j/2}e^{- c  x2^{j/2}} \|\Delta_j  g\|_{L^{q}(\R)}.
\end{equation}
Here $c>0$ is a constant. The first inequality is due to Littlewood-Paley decomposition and Minkowski's inequality. The proof of the second inequality is similar to \cite[Lemma 2.4]{Bahouri2011}. Specifically, by definition and \eqref{e1},
\begin{align*}
\|\Delta_j \partial_x^2 f\|_{L^{q}(\R)}=\|\Big(\hat{\phi}(2^{-j}s)\frac{\sqrt{is}}{2}e^{-x\sqrt{is}}\hat{g}(s)\Big)^{\vee}\|_{L^{q}(\R)},
\end{align*}
where $\hat{\phi}(s)$ is supported in some annulus $0<c_1<|s|<c_2<\infty$ with $\sum_{j \in \mathbb{Z}}\hat{\phi}(2^{-j}s)=1$. Let $\hat{\psi}(s)$ be supported in some annulus with $\hat{\psi}(s)=1$ on the support of $\hat{\phi}(s)$. By Young's inequality and scaling technique, we can write the above equation as
\begin{align*}
\|\Delta_j \partial_x^2 f\|_{L^{q}(\R)}&=\|\Big(\hat{\psi}(2^{-j}s)\hat{\phi}(2^{-j}s)\frac{\sqrt{is}}{2}e^{-x\sqrt{is}}\hat{g}(s)\Big)^{\vee}\|_{L^{q}(\R)}\\
&\leq \|\Big(\hat{\psi}(2^{-j}s)\frac{\sqrt{is}}{2}e^{-x\sqrt{is}}\Big)^{\vee}\|_{L^{1}(\R)}\cdot \|\Big(\hat{\phi}(2^{-j}s)\hat{g}(s)\Big)^{\vee}\|_{L^{q}(\R)}\\
&\leq2^{j/2}\, \|\Big(\hat{\psi}(s)\frac{\sqrt{is}}{2}e^{-x2^{j/2}\sqrt{is}}\Big)^{\vee}\|_{L^{1}(\R)}\cdot \|\Delta_j  g\|_{L^{q}(\R)}.
\end{align*}
Hence, what remains to show is 
\begin{align}\label{Young's eq cut off}
\|\Big(\hat{\psi}(s)\frac{\sqrt{is}}{2}e^{-x\sqrt{is}}\Big)^{\vee}\|_{L^{1}(\R)}\lesssim e^{- c  x}.
\end{align}
We have 
\begin{align*}
\Big(\hat{\psi}(s)\frac{\sqrt{is}}{2}e^{-x\sqrt{is}}\Big)^{\vee}&=\frac{1}{\sqrt{2\pi}}\int_{-\infty}^{\infty}\hat{\psi}(s)\frac{\sqrt{is}}{2}e^{-x\sqrt{is}}e^{ist}\ds\\
&=\frac{1}{\sqrt{2\pi}(1+t^2)}\int_{-\infty}^{\infty}(1-\pd_s^2)(e^{ist})\hat{\psi}(s)\frac{\sqrt{is}}{2}e^{-x\sqrt{is}}\ds\\
&=\frac{1}{\sqrt{2\pi}(1+t^2)}\int_{-\infty}^{\infty}(1-\pd_s^2)\Big(\hat{\psi}(s)\frac{\sqrt{is}}{2}e^{-x\sqrt{is}}\Big)e^{ist}\ds.
\end{align*}
Note that $\hbox{Re}(\sqrt{is})=\frac{1}{\sqrt 2}\sqrt{|s|}$ no matter $s>0$ or $s<0$. By using the fact
\begin{align*}
\Big|(1-\pd_s^2)\Big(\hat{\psi}(s)\frac{\sqrt{is}}{2}e^{-x\sqrt{is}}\Big)\Big|\lesssim e^{-cx},
\end{align*}
we arrive at \eqref{Young's eq cut off} and hence \eqref{e1b}. Therefore,
\begin{align*}
\int_{0}^{+\infty}\|\partial_x^2 f\|_{L^{q}(\R)}^q\dx&\lesssim \int_{0}^{+\infty}\left(\Sigma_{j\in \mathbb{Z}}\ 2^{j/2}e^{-cx 2^{j/2}} \|\Delta_j  g\|_{L^{q}(\R)}\right)^q \dx\\
&\lesssim \Sigma_{j\in \mathbb{Z}}\int_{0}^{+\infty} x^{-1+1/q} \left(2^{j/2}e^{-cx 2^{j/2}} \right)^{q-1+1/q} \dx\cdot\|\Delta_j  g\|_{L^{q}(\R)}^q\\
&\lesssim \Sigma_{j\in \mathbb{Z}}\ 2^{\frac{q-1}{2}j}\ \|\Delta_j  g\|_{L^{q}(\R)}^q 
\approx \|g\|_{\dot{B}^{\frac 12-\frac 1{2q}}_{q,q}}^q.
\end{align*}
Note that the second inequality is by H\"{o}lder's inequality $(\sum a_jb_j)^q\leq (\sum a_j^q)(\sum b_j^{q/(q-1)})^{q-1}$, where 
\[a_j=\left(2^{j/2}e^{-cx 2^{j/2}} \right)^{(q-1+1/q)/q}\|\Delta_j  g\|_{L^{q}(\R)},
\quad b_j=\left(2^{j/2}e^{-cx 2^{j/2}} \right)^{(q-1)/q^2},
\]
and $\sum b_j^{q/(q-1)} \lesssim x^{-1/q}$ since $\sup_{x>0} \sum_{j\in \mathbb{Z}}(x2^{j/2}e^{-cx 2^{j/2}})^{1/q}\lesssim 1$, see \cite[Lemma 2.35]{Bahouri2011}. Hence, the second inequality in \eqref{ee} is valid. 

To prove the first inequality in \eqref{ee}, we use the following identity from \eqref{e1},
\begin{equation}\label{e2}
\mathcal{F}_t g(s)=4\int_{0}^{+\infty} (2x\sqrt{is})e^{-x\sqrt{is}}\cdot\partial_x^2\mathcal{F}_t f(x,s)\dx.
\end{equation}
Therefore, by Minkowski and Young's inequalities and \eqref{Young's eq cut off} (and the inequality above it), we have
\begin{align*}
\|\Delta_j g\|_{L^{q}(\R)}&=\|\Big(\hat{\phi}(2^{-j}s)\hat{g}(s)\Big)^{\vee}\|_{L^{q}(\R)}\\
&=\|4\int_0^{\infty}\Big(\hat{\phi}(2^{-j}s)(2x\sqrt{is})e^{-x\sqrt{is}}\cdot(\partial_x^2\hat{f})\Big)^{\vee}\dx\|_{L^{q}(\R)}\\
&\lesssim\int_0^{\infty}x\norm{\big(\hat{\phi}(2^{-j}s)\sqrt{is}e^{-x\sqrt{is}}\big)^\vee }_{L^{1}(\R)}\cdot\|\partial_x^2f\|_{L^{q}(\R)}\dx \\
&\lesssim \int_{0}^{+\infty} x2^{j/2}\, e^{-cx2^{j/2}}\|\partial_x^2f\|_{L^{q}(\R)}\dx.
\end{align*}
Thus,
\begin{align*}
\Sigma_{j\in \mathbb{Z}}\ 2^{\frac{q-1}{2}j}\ \|\Delta_j  g\|_{L^{q}(\R)}^q &\lesssim \Sigma_{j\in \mathbb{Z}}\ 2^{\frac{q-1}{2}j}\ \left(\int_{0}^{+\infty} x2^{j/2}\,e^{-cx2^{j/2}}\|\partial_x^2f\|_{L^{q}(\R)}\dx\right)^q\\
&\lesssim \int_{0}^{+\infty}\left(\Sigma_{j\in \mathbb{Z}}\ x2^{j/2}\,e^{-cx2^{j/2}}\right)\cdot   \|\partial_x^2f\|_{L^{q}(\R)}^q\dx\\
&\lesssim \int_{0}^{+\infty}   \|\partial_x^2f\|_{L^{q}(\R)}^q\dx.
\end{align*}
For the second inequality we have used H\"{o}lder inequality 
\[(\int a(x)b(x)\dx)^q\leq (\int a(x)^q\dx)(\int b(x)^{q/(q-1)}\dx)^{q-1}, \] 
where 
\[a=\big(x2^{j/2}\,e^{-cx2^{j/2}}\big)^{1/q}\|\partial_x^2f\|_{L^{q}(\R)},\quad
b=\big(x2^{j/2}\,e^{-cx2^{j/2}}\big)^{1-1/q}.
\]
For the third inequality we used $\sup_{x>0} \sum_{j\in \mathbb{Z}}x2^{j/2}e^{-cx 2^{j/2}}\lesssim 1$ again.
Hence, the first inequality in \eqref{ee} is proved. 
\end{proof}

\section{Derivation of Poisson kernel}\label{Sec 3}
In this section we  derive the formula of Poisson kernel of system \eqref{Stokes_Eq}--\eqref{Navier-BC}. We assume $\bm{a}(x',t)\in C_c^{\infty}(\R^{n-1}\times\R)$, $\bm{u}(x,t)$ and $ p(x,t)$ ($\nabla p$ if $n=2$) are smooth functions satisfying \eqref{Stokes_Eq}--\eqref{Navier-BC} and vanishing sufficiently fast near infinity. We will use Fourier transform to calculate the Poisson kernel, which allows us to express $\bm{u}$ and $p$ in terms of $\bm{a}$. 

When $n\geq 3$, it is reasonable to assume $\bm{u}(x,t)$, $ p(x,t)$ vanish sufficiently fast near infinity, which makes the Fourier transforms of $\bm{u}$ and $p$ to be functions instead of distributions. However, we cannot expect pressure to vanish at infinity when $n=2$, since $\Delta p=0$ and the fundamental solution $E(x)=-\frac{1}{2 \pi} \log |x|$ for $n=2$ has no spatial decay (see also the first term in the RHS of \eqref{gn0}). Hence, the $n=2$ case is a bit subtle, and we need to treat it separately. Notice that $\pd_{1}\bm{u}$, $\pd_{1}p$ satisfy the Stokes equations \eqref{Stokes_Eq} and Navier BC \eqref{Navier-BC} with boundary value $\pd_{1}\bm{a}$. We can then  express $\pd_{1}\bm{u}$ and $\pd_{1}p$ in terms of $\pd_{1}\bm{a}$ for $n=2$. Hence, we can get the expression of $\bm{u}$ and $p$ in terms of $\bm{a}$ by the fact that $\bm{u}$ and $\nabla p$ vanish at infinity. The formula of pressure $p$ is subject to change up to a function $f(t)$. In the end we will get the same form of the Poisson kernel for both $n\geq 3$ and $n=2$.

We show the derivation of the Poisson kernel here for $n\geq3$ and $n=2$ is analogous using the above observation. We imitate Solonnikov’s treatment of the velocity for Stokes system in \cite{Solonnikov1977}, and decompose $\bm{u}$ and $p$ into two parts
\begin{equation}\label{expresion of u}
\bm{u}=\bm{v}+\nabla\varphi,\quad p=-\partial_{t}\varphi.
\end{equation}
Then $\bm{v}$ is solenoidal and solves the heat equation 
\begin{equation*}
\partial_{t}\bm{v}-\Delta \bm{v}=0,\quad    \nabla\cdot\bm{v}=0,
\end{equation*}
while $\varphi$ is harmonic
\[\Delta \varphi=0.
\]
Taking Fourier transform $\mathcal{F}_{x',t}$ on the above heat equation and Laplace equation, we have
\begin{align*}
\left(is+|\xi^{\prime}|^2\right)\tilde{\bm{v}}-\partial_{n}^2\tilde{\bm{v}}&=0,\\
|\xi^{\prime}|^2 \tilde{\varphi}-\partial_{n}^2\tilde{\varphi}&=0,
\end{align*}
where $\tilde{\bm{v}},\tilde{\varphi}$ are defined in \eqref{Fourier}.  Solving the above ODEs in $x_n>0$ and removing the exponentially growing part, we obtain
\begin{align*}
\tilde{\bm{v}}(\xi',x_n,s)&=\bm{\phi}(\xi',s)\cdot e^{-x_{n}\sqrt{is+|\xi^{\prime}|^2}},\\
\tilde{\varphi}(\xi',x_n,s)&=\psi(\xi',s)\cdot e^{-x_{n}|\xi^{\prime}|},
\end{align*}
for some function $\bm{\phi}=\left(\phi_1,\cdots,\phi_{n}\right)$ and $\psi$.
(Note $\bm{\phi}$ and $\varphi$ are different and are notations of \cite{Solonnikov1977}.)
By \eqref{expresion of u},
\begin{align}\label{eq3.2}
\tilde{\bm{u}}&=\bm{\phi}(\xi',s)\cdot e^{-x_{n}\sqrt{is+|\xi^{\prime}|^2}}+\left(i\xi_1,\cdots,i\xi_{n-1},-|\xi^{\prime}|\right)\psi(\xi',s)\cdot e^{-x_{n}|\xi^{\prime}|},\\
\tilde{p}&=-is\psi(\xi',s)\cdot e^{-x_{n}|\xi^{\prime}|}.\label{FT of pressure}
\end{align}
Next, taking Fourier transform of $\nabla\cdot\bm{v}=0$ and the Navier boundary condition \eqref{Navier-BC}, we get that 
\begin{equation*}
\sum_{h=1}^{n-1}i\xi_h\, \phi_h(\xi',s)-\sqrt{is+|\xi^{\prime}|^2}\,\phi_n(\xi',s)=0 ,
\end{equation*}
and for $1\leq k\leq n-1$,
\begin{align*}
-\left(\sqrt{is+|\xi^{\prime}|^2}+\alpha\right)\phi_k(\xi',s)-i\xi_{k}\left(|\xi^{\prime}|+\alpha\right)\psi(\xi',s)&=\tilde{a}_k(\xi',s), \\
\phi_n(\xi',s)-|\xi'|\psi(\xi',s)&=\tilde{a}_n(\xi',s).
\end{align*}
We solve the above system and get
\begin{align}
\psi(\xi',s)&=\frac{\sum_{h=1}^{n-1}i\xi_{h}\tilde{a}_{h}+\left(is+|\xi^{\prime}|^2+\alpha\sqrt{is+|\xi^{\prime}|^2}\right) \tilde{a}_{n}}{-is|\xi^{\prime}|+\alpha|\xi^{\prime}|\left(|\xi^{\prime}|-\sqrt{is+|\xi^{\prime}|^2}\right)},\label{psi}\\
\phi_k(\xi',s)&=-\frac{\tilde{a}_{k}}{\sqrt{is+|\xi^{\prime}|^2}+\alpha}-\frac{i\xi_{k}\left(|\xi^{\prime}|+\alpha\right)}{\sqrt{is+|\xi^{\prime}|^2}+\alpha}\psi(\xi',s),\notag\\
\phi_n(\xi',s)&=\tilde{a}_{n}+|\xi^{\prime}|\psi(\xi',s).\notag
\end{align}
Substituting the above expression back to \eqref{eq3.2}, we get that for $1\leq k\leq n-1$,
\begin{align}\label{FT of u}
\begin{split}
\tilde{u}_{k}(\xi',x_n,s)&=-\frac{\tilde{a}_{k}}{\sqrt{is+|\xi^{\prime}|^2}+\alpha} e^{-x_{n}\sqrt{is+|\xi^{\prime}|^2}}\\
&\ \ \ -i\xi_{k}\psi(\xi',s)\left(\frac{|\xi^{\prime}|+\alpha}{\sqrt{is+|\xi^{\prime}|^2}+\alpha}e^{-x_{n}\sqrt{is+|\xi^{\prime}|^2}}-e^{-x_{n}|\xi^\prime|}\right),\\
\tilde{u}_{n}(\xi',x_n,s)&=\tilde{a}_{n}e^{-x_{n}\sqrt{is+|\xi^{\prime}|^2}}+|\xi^\prime|\psi(\xi',s)\left(e^{-x_{n}\sqrt{is+|\xi^{\prime}|^2}}-e^{-x_{n}|\xi^{\prime}|}\right).
\end{split}
\end{align}
Taking inverse Fourier transform $\mathcal{F}_{x',t}^{-1}$ of equations \eqref{FT of pressure} and \eqref{FT of u}, we get $u_{k}=P_{kj}*a_{j}$ and $p=g_{j}*a_{j}$ for $1\leq k,j \leq n$. Observe that $\psi(\xi',s)$ can be split into two parts, the part with $\alpha=0$ and the remaining one
\begin{equation}\label{split psi}
\psi=\psi^0+\psi^\alpha,
\end{equation}
where
\begin{align}\label{psi0}
\psi^0=\frac{\sum_{h=1}^{n-1}i\xi_{h}\tilde{a}_{h}+\left(is+|\xi^{\prime}|^2\right) \tilde{a}_{n}}{-is|\xi'|}, \ \ \ 
\psi^\alpha=\alpha\frac{-\sum_{h=1}^{n-1}i\xi_{h}\tilde{a}_{h}+\sqrt{is+|\xi^{\prime}|^2}|\xi^{\prime}| \tilde{a}_{n}}{-is|\xi'|\left(|\xi^\prime|+\sqrt{is+|\xi^\prime|^2}+\alpha\right)}.
\end{align}
Hence, it seems
natural to split the velocity $\bm{u}$ and pressure $p$ into two parts, where the first part is the same expression with $\alpha=0$, and the second part is the difference. 
Namely,
\begin{align}\label{split of u}
\bm{u}=\bm{u}^0+\bm{u}^{\alpha},\quad p=p^0+p^{\alpha},
\end{align}
where for $1\leq k \leq n-1$,
\begin{align}\label{FT of u0}
\begin{split}
\tilde{u}^0_{k}(\xi',x_n,s)&=-\tilde{a}_{k}\frac{e^{-x_{n}\sqrt{is+|\xi^{\prime}|^2}}}{\sqrt{is+|\xi^{\prime}|^2}}  -i\xi_{k}|\xi'|\psi^0(\xi',s)\cdot\left(\frac{e^{-x_{n}\sqrt{is+|\xi^{\prime}|^2}}}{\sqrt{is+|\xi^{\prime}|^2}}-\frac{e^{-x_{n}|\xi^\prime|}}{|\xi'|}\right),\\
\tilde{u}^0_{n}(\xi',x_n,s)&=\tilde{a}_{n}e^{-x_{n}\sqrt{is+|\xi^{\prime}|^2}}+|\xi'|\psi^0(\xi',s)\cdot\left(e^{-x_{n}\sqrt{is+|\xi^{\prime}|^2}}-e^{-x_{n}|\xi^{\prime}|}\right),
\end{split}
\end{align}
and
\begin{equation}\label{FT of p0}
\tilde{p}^0(\xi',x_n,s)=-is\psi^0(\xi',s)e^{-x_{n}|\xi^{\prime}|}.
\end{equation}
In the following Lemma \ref{lem_p0}, we give the expression of the Poisson kernel of $\alpha=0$ case in physical variables $(x',x_n,t)$ by taking inverse Fourier transform of equations \eqref{FT of u0} and \eqref{FT of p0}.
Derivation of $\tilde{p}^{\alpha}$, $\tilde{\bm{u}}^{\alpha}$ will be  given before Lemma \ref{lem_pa}.

\begin{lem}\label{lem_p0}
Taking inverse Fourier transform of equations \eqref{FT of u0} and \eqref{FT of p0}, we get
\[u^0_{k}=P^0_{kj}*a_{j}, \ \ \ p^0=g^0_{j}*a_{j},\] 
for $1\leq k,j \leq n$. The followings are the expressions of $P^0_{kj}$ and $g^0_{j}$.
For $1\leq k \leq n$ and $1\leq j\leq n-1$,
\begin{equation}\label{3.10}
P_{kj}^{0}(x,t)=-2\delta_{kj}\Gamma(x,t)-2\mathcal{H}(t)\partial_{k}\partial_{j}\int_{t}^{+\infty}\Gamma(x,s)\ds,
\end{equation}
and for $1\leq k \leq n$ and $j=n$,
\begin{equation}\label{3.11}
P_{kn}^{0}(x,t)=-2\delta_{kn}\partial_{n}\Gamma(x,t)-2\mathcal{H}(t)\partial_{k}\partial_{n}^2\int_{t}^{+\infty}\Gamma(x,s)\ds-2\delta(t)\partial_{k}E(x),
\end{equation}
where $\mathcal{H}$ is the Heaviside function.
For $1\leq j \leq n-1$,
\begin{equation*}
g_{j}^{0}(x,t)=2\delta(t)\partial_{j}E(x),
\end{equation*}
and
\begin{equation}\label{gn0}
g_{n}^{0}(x,t)=2\delta^{\prime}(t)E(x)+2\delta(t)\partial_{n}^2E(x).
\end{equation}
\end{lem}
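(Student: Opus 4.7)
The plan is to read off $\tilde P^0_{kj}$ and $\tilde g^0_j$ as the Fourier multipliers of $\tilde a_j$ in \eqref{FT of u0}--\eqref{FT of p0}, scaled by the factor $(2\pi)^{-n/2}$ coming from the convolution rule on $\R^{n-1}\times\R$, and then invert each symbol in closed form using Lemma \ref{lem21}, Lemma \ref{lem23}, Corollary \ref{lem24} and the $\pd_n$-differentiated spatial corollary just after Lemma \ref{lem21}. Substituting \eqref{psi0} into \eqref{FT of u0}--\eqref{FT of p0} and regrouping, the resulting multipliers split into two classes: those that are $\xi'$-polynomials times $e^{-x_n|\xi'|}$ or $e^{-x_n\sqrt{|\xi'|^2+is}}$ (with or without a $1/|\xi'|$ or $1/\sqrt{|\xi'|^2+is}$ denominator), and those that additionally carry the factor $(-is)^{-1}$ inherited from $\psi^0$.

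The first class is handled directly by the cited lemmas together with the correspondences $i\xi_k\leftrightarrow\pd_k$ and $is\leftrightarrow\pd_t$, and by noting that a symbol independent of $s$ inverts to $\sqrt{2\pi}\,\delta(t)$ times its $\mathcal{F}_{x'}^{-1}$. This already produces the pressure kernel in \eqref{gn0} and its $1\le j\le n-1$ analogue, together with the diagonal heat-kernel piece $-2\delta_{kj}\Gamma(x,t)$ in $P^0_{kj}$ and the $-2\delta(t)\pd_k E(x)$ term in $P^0_{kn}$.

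The second class is the crux. All such terms appear multiplying the bracketed difference $F(\xi',x_n,s)=\tfrac{e^{-x_n\sqrt{|\xi'|^2+is}}}{\sqrt{|\xi'|^2+is}}-\tfrac{e^{-x_n|\xi'|}}{|\xi'|}$ (or a normal derivative thereof). To invert them I introduce
\[
G(x,t)=\mathcal{H}(t)\int_{t}^{+\infty}\Gamma(x,s)\ds,
\]
which satisfies $\pd_t G=-\Gamma(x,t)+\delta(t)E(x)$ distributionally via $\int_0^{+\infty}\Gamma(x,s)\ds=E(x)$ (valid for $n\ge 3$; for $n=2$ only the differentiated form \eqref{ET} is ever needed). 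Taking $\mathcal{F}_{x',t}$ and combining with Lemmas \ref{lem21} and \ref{lem23} yields the bridge
\[
\mathcal{F}_{x',t} G=\frac{1}{2(2\pi)^{n/2}}\cdot\frac{1}{-is}\,F,
\]
so that any symbol $\frac{Q(\xi')}{-is}F$ inverts to $2(2\pi)^{n/2}\,Q(-i\nabla')G$ for a polynomial $Q$. When $Q$ contains a $|\xi'|^2$ factor, I would use the heat equation in the form $\Delta'\Gamma=\pd_t\Gamma-\pd_n^2\Gamma$ to obtain, on $t>0$,
\[
\Delta' G=-\Gamma(x,t)-\mathcal{H}(t)\pd_n^2\int_{t}^{+\infty}\Gamma(x,s)\ds,
\]
which exchanges the unwanted horizontal second derivatives for the $\pd_n^2$-derivatives appearing in \eqref{3.11} and kills the spurious $\Gamma$ pieces so only the announced terms survive. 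Collecting all contributions then reproduces \eqref{3.10}--\eqref{gn0}.

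The delicate point is the $n=2$ case, where $E(x)=-\frac{1}{2\pi}\log|x|$ is not integrable and admits only a distributional Fourier transform (Remark \ref{rem22}), and the bare integral $\int_0^{+\infty}\Gamma\ds$ diverges. As flagged before \eqref{expresion of u}, one first applies the above derivation to $\pd_1\bm u$ and $\pd_1 p$, which solve \eqref{Stokes_Eq}--\eqref{Navier-BC} with data $\pd_1\bm a$ and are well-behaved at infinity, and then integrates back using that $\bm u$ and $\nabla p$ decay. Since the final expressions \eqref{3.10}--\eqref{gn0} feature $E$ only under at least one derivative, or paired with $\delta'(t)$ in the pressure kernel, they remain meaningful and continue to hold for $n=2$.
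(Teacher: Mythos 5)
Your proposal is correct and reaches the same formulas by essentially the same method as the paper: read off the multipliers of $\tilde a_j$ in \eqref{FT of u0}--\eqref{FT of p0}, invert them using Lemma~\ref{lem21}, Lemma~\ref{lem23} and Corollaries \ref{lem24} and the one following Lemma~\ref{lem21}, and handle the $(-is)^{-1}$ factor from $\psi^0$ by recognizing it as a time integration. The only organizational difference is where the algebra is done. The paper rewrites the symbol on the Fourier side,
\[
\frac{is+|\xi'|^2}{is}\Bigl(\tfrac{e^{-x_n b}}{b}-\tfrac{e^{-x_n|\xi'|}}{|\xi'|}\Bigr)=\frac{1}{is}\bigl(b\,e^{-x_n b}-|\xi'|e^{-x_n|\xi'|}\bigr)-\frac{e^{-x_n|\xi'|}}{|\xi'|},
\]
then uses $\mathcal{F}_t^{-1}\bigl(\pi\delta(s)+\tfrac1{is}\bigr)=\sqrt{2\pi}\,\mathcal{H}(t)$ to invert piece by piece and recombines via $\int_0^t\Gamma\,ds-\mathcal{H}(t)E=-\mathcal{H}(t)\int_t^\infty\Gamma\,ds$. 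You instead introduce $G=\mathcal{H}(t)\int_t^{\infty}\Gamma\,ds$ once, identify its transform as $\tfrac{1}{-2is(2\pi)^{n/2}}F$, and carry the full symbol $is+|\xi'|^2$ into physical space as $\partial_t-\Delta'$, using the heat-equation identities $\partial_tG=-\Gamma+\delta(t)E$ and $\Delta'G=-\Gamma-\mathcal{H}(t)\partial_n^2\int_t^\infty\Gamma\,ds$ so the spurious $\Gamma$'s cancel and the horizontal Laplacian is traded for $\partial_n^2$; I checked that this gives exactly the terms in \eqref{3.10}--\eqref{3.11} and the pressure kernels. Both routes require the same care for $n=2$, namely that $E$ and the undifferentiated $\int_t^\infty\Gamma\,ds$ are not individually well-defined, so one argues via $\partial_1\bm u,\partial_1 p$ as the paper indicates before \eqref{expresion of u}; you correctly flag this, and your observation that the final formulas carry $E$ only under derivatives or a $\delta'(t)$ suffices to make them meaningful. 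So this is the paper's proof with a modest repackaging, not a genuinely different argument, and there are no gaps.
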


\begin{proof}
The main tools are Lemma \ref{lem21} to Lemma \ref{lem24}.
From \eqref{FT of u0} and \eqref{psi0}, for $1\leq k,j\leq n-1$, we have
\begin{align*}
P_{kj}^{0}(x,t)=&\,(2\pi)^{-n/2}\delta_{kj}\,\mathcal{F}_{x',t}^{-1}\left(-\frac{e^{-x_{n}\sqrt{is+|\xi^{\prime}|^2}}}{\sqrt{is+|\xi^{\prime}|^2}} \right)\\
&+(2\pi)^{-n/2}\partial_{k}\partial_{j}\mathcal{F}_{x',t}^{-1}\left(\frac{1}{is}\left(\frac{e^{-x_{n}\sqrt{is+|\xi^{\prime}|^2}}}{\sqrt{is+|\xi^{\prime}|^2}}-\frac{e^{-x_{n}|\xi^\prime|}}{|\xi^{\prime}|}\right)\right) \\
=&\,-2\delta_{kj}\,\Gamma(x,t)+2\partial_{k}\partial_{j}\left(\int_{0}^{t}\Gamma(x,s)\ds-\mathcal{H}(t)E(x)\right).
\end{align*}
Note that in the second step above we use the following equality 
\[\frac{1}{is}\left(\frac{e^{-x_{n}\sqrt{is+|\xi^{\prime}|^2}}}{\sqrt{is+|\xi^{\prime}|^2}}-\frac{e^{-x_{n}|\xi^\prime|}}{|\xi^{\prime}|}\right)=(\frac{1}{is}+\pi\delta(s))\left(\frac{e^{-x_{n}\sqrt{is+|\xi^{\prime}|^2}}}{\sqrt{is+|\xi^{\prime}|^2}}-\frac{e^{-x_{n}|\xi^\prime|}}{|\xi^{\prime}|}\right),\]
and then apply $\mathcal{F}^{-1}_{t}\left(\pi\delta(s)+\frac{1}{is}\right)=\sqrt{2\pi}\mathcal{H}(t)$. This trick will be used repeatedly in the later calculation. 

For $1\leq k\leq n-1$ and $j=n$, we have 
\begin{align}
P_{kn}^{0}(x,t)=&\,(2\pi)^{-n/2}\partial_{k}\mathcal{F}_{x',t}^{-1}\left(\frac{is+|\xi^{\prime}|^2}{is}\left(\frac{e^{-x_{n}\sqrt{is+|\xi^{\prime}|^2}}}{\sqrt{is+|\xi^{\prime}|^2}}-\frac{e^{-x_{n}|\xi^\prime|}}{|\xi^{\prime}|}\right)\right)\notag\\
=&\,(2\pi)^{-n/2}\partial_{k}\mathcal{F}_{x',t}^{-1}\left(\frac{1}{is}\left(\sqrt{is+|\xi^{\prime}|^2}e^{-x_{n}\sqrt{is+|\xi^{\prime}|^2}}-|\xi'|e^{-x_{n}|\xi^\prime|}\right)-\frac{e^{-x_{n}|\xi^\prime|}}{|\xi'|}\right) \notag\\
=&\,2\partial_{k}\partial_{n}^2\left(\int_{0}^{t}\Gamma(x,s)\ds-\mathcal{H}(t) E(x)\right)-2\delta(t)\partial_{k}E(x).\label{CZ2}
\end{align}

For $k=n$ and $1\leq j \leq n-1$, we have
\begin{align*}
P_{nj}^{0}(x,t)=&\,-(2\pi)^{-n/2}\partial_{j}\mathcal{F}_{x',t}^{-1} \left(\frac{1}{is}\left(e^{-x_{n}\sqrt{is+|\xi^{\prime}|^2}}-e^{-x_{n}|\xi^\prime|}\right)\right)\\
=&\,2\partial_{n}\partial_{j}\left(\int_{0}^{t}\Gamma(x,s)\ds-\mathcal{H}(t)E(x)\right),
\end{align*}
and for $k=j=n$, 
\begin{align*}
P_{nn}^{0}(x,t)&=(2\pi)^{-n/2}\,\mathcal{F}_{x',t}^{-1}\left( e^{-x_{n}\sqrt{is+|\xi^{\prime}|^2}}-\frac{is+|\xi^{\prime}|^2}{is}\left(e^{-x_{n}\sqrt{is+|\xi^{\prime}|^2}}-e^{-x_{n}|\xi^\prime|}\right)\right)\\
&=-2\partial_{n}\Gamma(x,t) +2\partial_{n}^3\left(\int_{0}^{t}\Gamma(x,s)\ds-\mathcal{H}(t) E(x)\right)-2\delta(t)\partial_{n}E(x).
\end{align*}
By \eqref{ET}, we obtain \eqref{3.10} and \eqref{3.11}. 

Analogously, from \eqref{FT of p0} and \eqref{psi0}, for $1\leq j \leq n-1$, we have
\begin{align*}
g_{j}^{0}(x,t)=(2\pi)^{-n/2}\partial_{j}\mathcal{F}_{x',t}^{-1}\left(\frac{1}{|\xi^{\prime}|}e^{-x_{n}|\xi^{\prime}|}\right)=2\delta(t)\partial_{j}E(x),
\end{align*}
and for $j=n$,
\begin{align*}
g_{n}^{0}(x,t)=(2\pi)^{-n/2}\mathcal{F}_{x',t}^{-1}\left(
\frac{is+|\xi^{\prime}|^2}{|\xi^{\prime}|}e^{-x_{n}|\xi^{\prime}|}\right)=2\delta^{\prime}(t)E(x)+2\delta(t)\partial_{n}^2E(x).
\end{align*}
The above shows the $g_j^0$ formulas in Lemma \ref{lem_p0}.
\end{proof}

Now we derive the expressions of $\tilde{\bm{u}}^{\alpha}$ and $\tilde{p}^{\alpha}$ defined in \eqref{split of u}. Denote $b=\sqrt{is+|\xi'|^2}$ for simplicity. 
Using the facts
\[
-\frac{1}{b+\alpha}=-\frac{1}{b}+\frac{\alpha}{b(b+\alpha)},\quad
\frac{|\xi'|+\alpha}{b+\alpha}=\frac{|\xi'|}{b}+\frac{\alpha(b-|\xi'|)}{b(b+\alpha)},
\]
and the splitting of $\psi(\xi',s)$ in \eqref{split psi}, we have
\begin{align}\label{FT of pa}
\tilde{p}^{\alpha}(\xi',x_n,s)
&=-is\psi^\alpha(\xi',s)e^{-x_{n}|\xi^{\prime}|}\notag\\
&=\frac{-\alpha\sum_{h=1}^{n-1}i\xi_{h}\tilde{a}_{h} +\alpha b|\xi'|\tilde{a}_{n}}{|\xi'|+b+\alpha} \cdot\frac{e^{-x_n|\xi'|}}{|\xi'|},
\end{align}
and for $1\leq k \leq n-1$,
\begin{align}\label{FT of uka}
\tilde{u}^{\alpha}_k(\xi',x_n,s)&=\frac{\alpha\tilde{a}_k}{b(b+\alpha)}e^{-x_n b}-i\xi_k \psi^0(\xi',s)\frac{\alpha(b-|\xi'|)}{b(b+\alpha)} e^{-x_n b}\notag\\
&\quad-i\xi_{k}\psi^\alpha(\xi',s)\left(\frac{|\xi'|+\alpha}{b+\alpha}e^{-x_{n}b}-e^{-x_{n}|\xi^\prime|}\right)\notag \\
&=\frac{\alpha\tilde{a}_k}{b(b+\alpha)}e^{-x_n b} +\alpha i\xi_k\left(\sum_{h=1}^{n-1}i\xi_{h}\tilde{a}_{h}\right)\cdot\Bigg(\frac{-1}{(|\xi'|+b+\alpha)is}\bigg(\frac{e^{-x_n b}}{b}-\frac{e^{-x_n|\xi'|}}{|\xi'|}\bigg) \notag\\
&\ \ \ +\frac{ e^{-x_n b}}{|\xi'|(|\xi'|+b+\alpha)b(b+\alpha)}\Bigg)\notag\\
&\ \ \ +\alpha i\xi_k\tilde{a}_{n}\Bigg(\frac{ b|\xi'|}{(|\xi'|+b+\alpha)is}\bigg(\frac{e^{-x_n b}}{b}-\frac{e^{-x_n|\xi'|}}{|\xi'|}\bigg) +\frac{ e^{-x_n b}}{|\xi'|(|\xi'|+b+\alpha)}\Bigg),
\end{align}
and
\begin{align}\label{FT of una}
\tilde{u}_{n}^\alpha(\xi',x_n,s)&=|\xi^\prime|\psi^\alpha(\xi',s)\left(e^{-x_{n}b}-e^{-x_{n}|\xi^{\prime}|}\right)\notag\\
&=\frac{\alpha\sum_{h=1}^{n-1}i\xi_{h}\tilde{a}_{h} -\alpha b|\xi'|\tilde{a}_{n}}{(|\xi'|+b+\alpha)is} \left(e^{-x_{n}b}-e^{-x_{n}|\xi'|}\right).
\end{align}

The following Lemma will be useful in later calculation.
\begin{lem}\label{lem_pa3}
\begin{align}
&\frac{1}{is(|\xi^\prime|+b+\alpha)}\left(\frac{e^{-x_n b}}{b}-\frac{e^{-x_n|\xi'|}}{|\xi'|}\right)\notag\\
&=-\frac{1}{2|\xi^{\prime}|b}\int_{0}^{+\infty}e^{-\alpha z}\int_{0}^{+\infty}e^{-(y_{n}+z)|\xi^{\prime}|}\left(e^{-|x_{n}+z-y_n|b}+e^{-(x_{n}+z+y_n)b}\right)\dy_{n}\dz \label{3.17}\\
&\quad+\frac{1}{2|\xi^{\prime}|b}\int_{0}^{+\infty}e^{-\alpha z}\int_{0}^{+\infty}e^{-(x_n+y_{n}+z)|\xi^{\prime}|}\left(e^{-|z-y_n|b}-e^{-(z+y_n)b}\right)\dy_{n}\dz,\notag
\end{align}
and
\begin{equation}\label{3.18}
\frac{1}{is(|\xi^\prime|+b+\alpha)}\left(e^{-x_n b}-e^{-x_n|\xi'|}\right)=-\frac{1}{is(|\xi^\prime|+b+\alpha)}\partial_{n}\left(\frac{e^{-x_n b}}{b}-\frac{e^{-x_n|\xi'|}}{|\xi'|}\right).
\end{equation}
\end{lem}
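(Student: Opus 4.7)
The plan is to treat the two identities separately and reduce each to a routine integral computation.

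Identity \eqref{3.18} is essentially immediate: since $\partial_n(e^{-x_n b}/b)=-e^{-x_n b}$ and $\partial_n(e^{-x_n|\xi'|}/|\xi'|)=-e^{-x_n|\xi'|}$, and the prefactor $\frac{1}{is(|\xi'|+b+\alpha)}$ does not depend on $x_n$, differentiating $\frac{e^{-x_n b}}{b}-\frac{e^{-x_n|\xi'|}}{|\xi'|}$ in $x_n$ and accounting for the minus sign on the RHS of \eqref{3.18} recovers the LHS. I would simply record this observation and move on.

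For \eqref{3.17}, my strategy is to evaluate the two iterated integrals on the RHS in closed form by performing the inner $y_n$ integration first, then the outer $z$ integration. The key auxiliary identities are the elementary Laplace-type evaluations
\begin{align*}
\int_0^\infty e^{-yA}\bigl(e^{-|C-y|B}+e^{-(C+y)B}\bigr)\dy &= \frac{2(A\,e^{-CB}-B\,e^{-CA})}{A^2-B^2},\\
\int_0^\infty e^{-yA}\bigl(e^{-|C-y|B}-e^{-(C+y)B}\bigr)\dy &= \frac{2B\,(e^{-CB}-e^{-CA})}{A^2-B^2},
\end{align*}
valid for $C>0$ and $\mathrm{Re}(A),\mathrm{Re}(B)>0$, which I would derive by splitting the $y$-integration at $y=C$ and summing elementary exponential integrals over a common denominator. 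Applying the first with $(A,B,C)=(|\xi'|,b,x_n+z)$ and the second with $(A,B,C)=(|\xi'|,b,z)$, together with the crucial algebraic identity $|\xi'|^2-b^2=-is$, turns each inner integral into a multiple of $is^{-1}$ involving only $e^{-(x_n+z)b}$, $e^{-(x_n+z)|\xi'|}$ and $e^{-zb}$, $e^{-z|\xi'|}$ respectively. After absorbing the outer exponential factors $e^{-\alpha z}e^{-z|\xi'|}$ (resp.\ $e^{-\alpha z}e^{-x_n|\xi'|}e^{-z|\xi'|}$), the remaining $z$-integrals are all of the single type $\int_0^\infty e^{-(\alpha+\lambda)z}\dz=1/(\alpha+\lambda)$ with $\lambda\in\{2|\xi'|,\,|\xi'|+b\}$, producing denominators $\alpha+2|\xi'|$ and $\alpha+|\xi'|+b$.

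The main obstacle, and the step I would handle most carefully, is the bookkeeping of signs and coefficients in the final assembly: after the $z$-integrations, the terms carrying the spurious denominator $\alpha+2|\xi'|$ appear from both halves of the RHS and must cancel \emph{exactly}, leaving behind only the denominator $\alpha+|\xi'|+b$. This cancellation is precisely what the particular ``$+$'' / ``$-$'' pairing inside the two double integrals of \eqref{3.17} is designed to produce, and it is the point where the LHS structure emerges. Once that cancellation is verified, the surviving pieces reorganize as $\frac{1}{is(\alpha+|\xi'|+b)}\bigl(\frac{e^{-x_n b}}{b}-\frac{e^{-x_n|\xi'|}}{|\xi'|}\bigr)$, establishing \eqref{3.17}. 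No tools beyond Fubini's theorem and elementary exponential integration are required.
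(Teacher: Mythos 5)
Your proof is correct. The elementary Laplace-type evaluations you set out are exactly the paper's auxiliary identities \eqref{3.21}--\eqref{3.22} (verified by splitting the $y_n$-integral at $y_n=C$), and the algebraic fact $|\xi'|^2-b^2=-is$ plays the same pivotal role in both arguments; the only real difference is direction. The paper proves \eqref{3.17} LHS$\to$RHS by writing $1/(|\xi'|+b+\alpha)=\int_0^\infty e^{-(|\xi'|+b+\alpha)z}\dz$, rewriting $\frac1{is}e^{-w|\xi'|}e^{-zb}$ via \eqref{3.19}--\eqref{3.20}, and letting the two spurious $\frac{b}{is|\xi'|}e^{-(w+z)|\xi'|}$ and $\frac1{is}e^{-(w+z)|\xi'|}$ terms cancel \emph{inside the integrand}, before the $z$-integration; you go RHS$\to$LHS by evaluating the $y_n$-integrals in closed form, performing the $z$-integration, and observing that the two terms carrying the spurious denominator $\alpha+2|\xi'|$ cancel. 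These are the same cancellation, seen either before or after integrating in $z$, so the proofs are equivalent; your observation that \eqref{3.18} is immediate from $\partial_n(e^{-x_nb}/b)=-e^{-x_nb}$ is likewise fine and is taken for granted in the paper.
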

\begin{proof}
Notice that, for $w,z>0$,
\begin{align}
&\quad\int_0^{+\infty}e^{-(y_{n}+w)|\xi^{\prime}|}e^{-|z-y_n|b}\dy_n\notag\\
&=\int_0^{z}e^{-(y_{n}+w)|\xi^{\prime}|}e^{-(z-y_n)b}\dy_n+\int_{z}^{+\infty}e^{-(y_{n}+w)|\xi^{\prime}|}e^{-(y_n-z)b}\dy_n\notag\\
&=\frac{1}{b-|\xi'|}\left(e^{zb-z|\xi'|}-1 \right)e^{-w|\xi'|}e^{-zb}+\frac{1}{|\xi'|+b}e^{-z(|\xi'|+b)}e^{-w|\xi'|}e^{zb}\notag\\
&=-\frac{1}{b-|\xi'|}e^{-w|\xi'|}e^{-zb}+\left(\frac{1}{b-|\xi^{\prime}|}+\frac{1}{b+|\xi^{\prime}|}\right)e^{-(w+z)|\xi^{\prime}|},\label{3.21}
\end{align}
and
\begin{equation}\label{3.22}
\int_{0}^{+\infty}e^{-(y_{n}+w)|\xi^{\prime}|}e^{-(y_n+z)b}\dy_n=\frac{1}{b+|\xi'|}e^{-w|\xi'|}e^{-zb}.
\end{equation}
Combining \eqref{3.21} and \eqref{3.22}, we obtain
for $w,z>0$
\begin{align}
\frac{1}{is}e^{-w|\xi^{\prime}|}e^{-zb}&=-\frac{1}{2|\xi^{\prime}|}\int_{0}^{+\infty}e^{-(y_{n}+w)|\xi^{\prime}|}\left(e^{-|z-y_n|b}+e^{-(y_n+z)b}\right)\dy_{n}\notag\\
&\quad+\frac{b}{is|\xi^{\prime}|}e^{-(w+z)|\xi^{\prime}|},\label{3.19}
\end{align}
and
\begin{align}
\frac{1}{is}e^{-w|\xi^{\prime}|}e^{-zb}&=-\frac{1}{2b}\int_{0}^{+\infty}e^{-(y_{n}+w)|\xi^{\prime}|}\left(e^{-|z-y_n|b}-e^{-(y_n+z)b}\right)\dy_{n}\notag\\
&\quad+\frac{1}{is}e^{-(w+z)|\xi^{\prime}|}.\label{3.20}
\end{align}

Since
\begin{align*}
&\quad\frac{1}{is(|\xi^\prime|+b+\alpha)}\left(\frac{e^{-x_n b}}{b}-\frac{e^{-x_n|\xi'|}}{|\xi'|}\right)\\
&=\int_{0}^{+\infty}e^{-\alpha z}\frac{1}{is}\left(\frac{1}{b}e^{-z|\xi^{\prime}|}e^{-\left(x_{n}+z\right)b}-\frac{1}{|\xi^{\prime}|}e^{-(x_n+z)|\xi^{\prime}|}e^{-zb}\right)\dz,
\end{align*}
and by \eqref{3.19} and \eqref{3.20},
\begin{align*}
&\quad\frac{1}{is}\left(\frac{1}{b}e^{-z|\xi^{\prime}|}e^{-\left(x_{n}+z\right)b}-\frac{1}{|\xi^{\prime}|}e^{-(x_n+z)|\xi^{\prime}|}e^{-zb}\right)\\
&=-\frac{1}{2|\xi^{\prime}|b}\int_{0}^{+\infty}e^{-(y_{n}+z)|\xi^{\prime}|}\left(e^{-|x_{n}+z-y_n|b}+e^{-(x_{n}+z+y_n)b}\right)\dy_{n}\\
&\quad+\frac{1}{2|\xi^{\prime}|b}\int_{0}^{+\infty}e^{-(x_n+y_{n}+z)|\xi^{\prime}|}\left(e^{-|z-y_n|b}-e^{-(z+y_n)b}\right)\dy_{n},
\end{align*}
we arrive at \eqref{3.17}.
\end{proof}

\begin{lem}\label{lem_pa}
Taking inverse Fourier transform of equations \eqref{FT of pa}, \eqref{FT of uka}, \eqref{FT of una}, we get
\[u^{\alpha}_{k}=\alpha P^{\alpha}_{kj}*a_{j}, \ \ \  p^{\alpha}=\alpha g^{\alpha}_{j}*a_{j},\] 
for $1\leq k,j \leq n$. 
The following are the expressions of $P^{\alpha}_{kj}$ and $g^{\alpha}_{j}$.
For $1\leq k \leq n$ and $1\leq j\leq n-1$,
\begin{equation}\label{pkj1}
P_{kj}^{\alpha}=2 \delta_{kj} I_1+4 \delta_{k<n} \partial_k\partial_j I_2+2\partial_{k}\partial_{j} I_3-2\partial_{k}\partial_{j} I_4,
\end{equation}
with
\begin{equation}\label{def of I}
\begin{split}
I_1&=\int_{0}^{+\infty}e^{-\alpha z}\,\Gamma(x^{\prime},x_{n}+z,t)\dz,\\
I_2&=\int_{0}^{+\infty}\int_{0}^{+\infty}e^{-\alpha (w+z)}\int_{\R^{n-1}}E(x^{\prime}-y^{\prime},z)\Gamma(y^{\prime},x_{n}+w+z,t)\dy^{\prime}\dw\dz,\\
I_3&=\int_{0}^{+\infty}e^{-\alpha z}\int_{\R_+^{n}}E(x^{\prime}-y^{\prime},y_n+z)\Gamma(y^{\prime},x_{n}+z-y_n,t)\dy\dz\\
&\quad+\int_{0}^{+\infty}e^{-\alpha z}\int_{\R_+^{n}}E(x^{\prime}-y^{\prime},y_n+z)\Gamma(y^{\prime},x_{n}+z+y_n,t)\dy\dz,\\
I_4&=\int_{0}^{+\infty}e^{-\alpha z}\int_{\R_+^{n}}E(x^{\prime}-y^{\prime},x_{n}+y_n+z)\Gamma(y^{\prime},z-y_n,t)\dy\dz\\
&\quad-\int_{0}^{+\infty}e^{-\alpha z}\int_{\R_+^{n}}E(x^{\prime}-y^{\prime},x_{n}+y_n+z)\Gamma(y^{\prime},z+y_n,t)\dy\dz.
\end{split}
\end{equation}
For $1\leq k \leq n$ and $j=n$,
\begin{equation}\label{pkj2}
P_{kn}^{\alpha}= -4 \delta_{k<n}\partial_k\partial_n J_1-2 \partial_k\partial_n J_2+2\partial_k\partial_n J_3,
\end{equation}
with
\begin{equation}\label{def of J}
\begin{split}
J_1&=\int_{0}^{+\infty}e^{-\alpha z}\int_{\R^{n-1}}E(x^{\prime}-y^{\prime},z)\Gamma(y^{\prime},x_{n}+z,t)\dy^{\prime}\dz,\\
J_2&=\int_{0}^{+\infty}e^{-\alpha z}\int_{\R_+^{n}}\partial_{n}E(x^{\prime}-y^{\prime},y_n+z)\Gamma(y^{\prime},x_{n}+z-y_n,t)\dy\dz\\
&\quad+\int_{0}^{+\infty}e^{-\alpha z}\int_{\R_+^{n}}\partial_{n}E(x^{\prime}-y^{\prime},y_n+z)\Gamma(y^{\prime},x_{n}+z+y_n,t)\dy\dz,\\
J_3&=\int_{0}^{+\infty}e^{-\alpha z}\int_{\R_+^{n}}E(x^{\prime}-y^{\prime},x_{n}+y_n+z)\partial_{n}\Gamma(y^{\prime},z-y_n,t)\dy\dz\\
&\quad-\int_{0}^{+\infty}e^{-\alpha z}\int_{\R_+^{n}}E(x^{\prime}-y^{\prime},x_{n}+y_n+z)\partial_{n}\Gamma(y^{\prime},z+y_n,t)\dy\dz.
\end{split}
\end{equation}
For $1\leq j \leq n-1$,
\begin{equation*}
g_{j}^{\alpha}(x,t)=4\partial_j\int_{0}^{+\infty}e^{-\alpha z}\left(\int_{\R^{n-1}}E(x^{\prime}-y^{\prime},x_{n}+z)\partial_{n}\Gamma(y^{\prime},z,t)\dy^{\prime}\right)\dz.
\end{equation*}
For $j=n$,
\begin{equation*}
g_{n}^{\alpha}(x,t)=-4\partial_{n}\int_{0}^{+\infty}e^{-\alpha z}\left(\int_{\R^{n-1}} E(x^{\prime}-y^{\prime},x_{n}+z)\,\partial_{n}^2 \Gamma(y^{\prime},z,t)\dy^{\prime}\right)\dz.
\end{equation*} 
\end{lem}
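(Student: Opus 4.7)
The plan is to take inverse Fourier transforms of the formulas \eqref{FT of pa}, \eqref{FT of uka}, \eqref{FT of una} term by term, using the inverse Fourier transform identities (Lemmas \ref{lem21}--\ref{lem24}) of Section \ref{Sec 2} together with the decomposition of $\frac{1}{is(|\xi'|+b+\alpha)}$ given by Lemma \ref{lem_pa3}. The starting point is to represent the Fourier multiplier
\[
\frac{1}{|\xi'|+b+\alpha}=\int_{0}^{+\infty}e^{-\alpha z}\,e^{-z|\xi'|}\,e^{-zb}\dz,
\]
which introduces the $\int_0^{+\infty}e^{-\alpha z}\cdots\dz$ structure appearing in every integral $I_1,\dots,I_4$, $J_1,\dots,J_3$, $g_j^\alpha$. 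After this substitution, each integrand becomes a product $e^{-w|\xi'|}e^{-vb}$ (possibly with polynomial factors in $|\xi'|$, $b$, or $i\xi_k$) which can be inverted termwise by writing the product as a convolution in $x'$ of $\mathcal{F}_{x'}^{-1}[e^{-w|\xi'|}/|\xi'|]$ (proportional to $E(x',w)$) and $\mathcal{F}_{x',t}^{-1}[e^{-vb}/b]$ (proportional to $\Gamma(x',v,t)$), then differentiated as needed.

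For the pressure $p^\alpha$, this direct procedure is sufficient: each of the two terms in \eqref{FT of pa} is $e^{-x_n|\xi'|}/|\xi'|$ multiplied by $i\xi_h/(|\xi'|+b+\alpha)$ or $b/(|\xi'|+b+\alpha)$, so writing the denominator as the above $z$-integral and inverting produces the $E(x'-y',x_n+z)\,\partial_n\Gamma(y',z,t)$ and $E(x'-y',x_n+z)\,\partial_n^2\Gamma(y',z,t)$ convolutions that define $g_j^\alpha$ and $g_n^\alpha$.

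For the velocity components, the more delicate terms are those carrying a factor $1/is$, namely those coming from $\psi^0$ in \eqref{FT of uka} and from $\psi^\alpha$ in \eqref{FT of una}. For these, I would first apply \eqref{3.17} (for $u_k^\alpha$ with $1\le k\le n-1$) and \eqref{3.18} (for $u_n^\alpha$) of Lemma \ref{lem_pa3} to trade the singular $1/is$ factor for an extra $y_n$-integral of differences and sums of $e^{-|z\pm y_n|b}$ and $e^{-(z\pm y_n)b}$ factors. After this rewriting, the integrand is again a product of a $|\xi'|$-exponential and a $b$-exponential, so Lemmas \ref{lem21} and \ref{lem23} (and their corollaries for derivatives in $x_n$) give, after undoing $i\xi_k\leftrightarrow\partial_k$, the four convolution integrals $I_1,I_2,I_3,I_4$ for $P^\alpha_{kj}$ with $1\le j\le n-1$. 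The $e^{-x_nb}$ term in \eqref{FT of uka} not involving $1/is$ gives the $I_1$ contribution (using the fact that the simple exponential $e^{-x_nb}/b$ inverts to $2(2\pi)^{n/2}\Gamma$). The same argument applied to \eqref{FT of una} and to the $j=n$ columns of \eqref{FT of uka} produces $J_1,J_2,J_3$ via \eqref{3.18}, with the extra $\partial_n$ landing either on $E$ or on $\Gamma$ according to which factor is differentiated in the $b$-variable.

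The main obstacle is purely bookkeeping: there are many terms, and one must keep careful track of signs, of which variable ($x_n+z$, $x_n+w+z$, $x_n+z\pm y_n$, or $x_n+y_n+z$) each $E$ and $\Gamma$ depends on, and of the extra $|\xi'|^{-1}$ or $b^{-1}$ factors that determine whether an $E$ or a $\Gamma$ (rather than a derivative thereof) appears. To minimize errors I would handle the four cases (i) $1\le k<n$, $1\le j<n$, (ii) $1\le k<n$, $j=n$, (iii) $k=n$, $1\le j<n$, (iv) $k=j=n$ separately, in each case substituting the $z$-integral for $(|\xi'|+b+\alpha)^{-1}$, invoking \eqref{3.17}--\eqref{3.18} when $1/is$ is present, and then reading off the convolutions term by term via Lemmas \ref{lem21}--\ref{lem24}. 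The identifications $(I_1,\dots,I_4)$ and $(J_1,J_2,J_3)$ in the statement are precisely what this scheme produces.
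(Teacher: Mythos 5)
Your proposal is correct and matches the paper's own proof in all essentials: the paper likewise uses the Laplace-transform representation $\frac{1}{w+\alpha}=\int_0^\infty e^{-(w+\alpha)z}\,dz$ (applied with $w=b$, $w=|\xi'|+b$ as appropriate), then Lemma~\ref{lem_pa3} (specifically \eqref{3.17}--\eqref{3.18}) to dispose of the $1/is$ factors, and finally Lemmas~\ref{lem21}--\ref{lem24} to read off $E$'s and $\Gamma$'s term by term. The only minor imprecision in your sketch is that the $I_1$ term comes from $\frac{1}{b(b+\alpha)}e^{-x_nb}$ and so also needs a $z$-integral (for $(b+\alpha)^{-1}$) before inverting, but since your master identity is stated for a general denominator this is covered.
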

\begin{proof}
The main tools are Lemma \ref{lem21} to Lemma \ref{lem24}
and Lemma \ref{lem_pa3}, and the following equation
\begin{equation}\label{alpha to int}
\frac{1}{w+\alpha}=\int_0^{+\infty}e^{-(w+\alpha)z}\dz, \quad (\mbox{Re}(w)>0).
\end{equation}

We first obtain the expression of $g_j^\alpha$ by inverting $\tilde g_j^\alpha$, the factor of $\tilde a_j$ in \eqref{FT of pa}. For $1\leq j \leq n-1$, we have
\begin{align*}
g_{j}^{\alpha}(x,t)&=-(2\pi)^{-n/2}\partial_{j}\mathcal{F}_{x',t}^{-1}\left(\frac{1}{|\xi^{\prime}|\left(|\xi^\prime|+b+\alpha\right)}e^{-x_{n}|\xi^{\prime}|}\right)\\
&=- (2\pi)^{-n/2}\partial_{j}\mathcal{F}_{x',t}^{-1}\left(\int_{0}^{+\infty}e^{-\alpha z}\cdot\frac{e^{-(x_n+z)|\xi'|}}{|\xi^{\prime}|}\cdot e^{-zb}\dz\right)\\
&=4\partial_j\left(\int_{0}^{+\infty}e^{-\alpha z}\int_{\R^{n-1}}E(x^{\prime}-y^{\prime},x_{n}+z)\partial_{n}\Gamma(y^{\prime},z,t)\dy^{\prime}\dz\right),
\end{align*}
and for $j=n$,
\begin{align*}
g_{n}^{\alpha}(x,t)&=(2\pi)^{-n/2}\mathcal{F}_{x',t}^{-1}\left(\frac{b}{|\xi^\prime|+b+\alpha}\,e^{-x_{n}|\xi^{\prime}|}\right)\\
&=(2\pi)^{-n/2}\mathcal{F}_{x',t}^{-1}\left(\int_0^{+\infty}e^{-\alpha z}\cdot e^{-(x_n+z)|\xi'|}\cdot b e^{-zb}\dz\right)\\
&=-4\int_{0}^{+\infty}e^{-\alpha z}\int_{\R^{n-1}}\partial_{n} E(x^{\prime}-y^{\prime},x_{n}+z)\,\partial_{n}^2 \Gamma(y^{\prime},z,t)\dy^{\prime}\dz.
\end{align*}

We next obtain the expression of $P_{kj}^\alpha$  by inverting $\tilde P_{kj}^\alpha$, the factor of $\tilde a_j$ in \eqref{FT of uka} and  \eqref{FT of una}. For $1\leq k\leq n $ and $1\leq j\leq n-1$, by \eqref{3.18} when $k=n$, we have
\begin{align*}
P_{kj}^{\alpha}(x,t)&=(2\pi)^{-n/2}\delta_{kj}\,\mathcal{F}_{x',t}^{-1}\left(\frac{1}{b\left(b+\alpha\right)} e^{-x_{n}b}\right)\\
&+ \delta_{k<n}(2\pi)^{-n/2}\partial_{k}\partial_{j}\mathcal{F}_{x',t}^{-1}\left(\frac{e^{-x_{n}b}}{|\xi^{\prime}|\left(b+\alpha\right)\left(|\xi^{\prime}|+b+\alpha\right)b}\right)\\
&-(2\pi)^{-n/2}\partial_{k}\partial_{j}\mathcal{F}_{x',t}^{-1}\left(\frac{1}{is\left(|\xi^{\prime}|+b+\alpha\right)}\left(\frac{e^{-x_{n}b}}{b}-\frac{e^{-x_{n}|\xi^{\prime}|}}{|\xi^{\prime}|}\right)\right).
\end{align*}
Using Lemmas \ref{lem21}, \ref{lem23} and equations \eqref{3.17} and \eqref{alpha to int}, we obtain \eqref{pkj1}.
For $1\leq k \leq n$  and $j=n$, by \eqref{3.18} when $k=n$, we have
\begin{align*}
P_{kn}^{\alpha}(x,t)&=\delta_{k<n}(2\pi)^{-n/2}\partial_{k}\mathcal{F}_{x',t}^{-1}\left(\frac{1}{|\xi^{\prime}|\left(|\xi^{\prime}|+b+\alpha\right)}e^{-x_{n}b}\right)\\
&+(2\pi)^{-n/2}\partial_{k}\mathcal{F}_{x',t}^{-1}\left(\frac{|\xi^{\prime}|b}{is\left(|\xi^{\prime}|+b+\alpha\right)}\left(\frac{e^{-x_{n}b}}{b}-\frac{e^{-x_{n}|\xi^{\prime}|}}{|\xi^{\prime}|}\right)\right).
\end{align*}
Using Lemma \ref{lem21}--Corollary \ref{lem24} and equations \eqref{3.17} and \eqref{alpha to int}, we obtain \eqref{pkj2}. 
\end{proof}

\begin{lem}\label{lem-boundary-convergence}
If $\bm{a}\in C_c^{1,0}(\R^{n-1}\times \R)$, and $\tilde{\bm{u}}(x,t)$ is defined by \eqref{FT of u}, then 
\begin{align}\label{cor_bc2}
\begin{split}
(\partial_n\tilde{u}_k-\alpha \tilde{u}_k)(x,t)\to \tilde{a}_k(x',t),\\
\tilde{u}_n(x,t)\to \tilde{a}_n(x',t),
\end{split}
\end{align}
in the $L^2$ norm as $x_n\to 0_+$.
\end{lem}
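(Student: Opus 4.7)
I would work entirely on the Fourier side. By Plancherel's theorem, $L^2(\R^{n-1}\times\R)$ convergence of the boundary quantities in \eqref{cor_bc2} is equivalent to $L^2(\R^{n-1}_{\xi'}\times\R_s)$ convergence of their Fourier transforms. Throughout, write $b=\sqrt{is+|\xi'|^2}$, which satisfies $\mathrm{Re}(b)\ge\max(|\xi'|,\sqrt{|s|/2})>0$.

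\emph{Step 1: pointwise attainment.} Setting $x_n=0$ directly in \eqref{FT of u} and using the identity $(|\xi'|+\alpha)b-|\xi'|(b+\alpha)=\alpha(b-|\xi'|)$, a short computation gives
\[
(\partial_n\tilde u_k-\alpha\tilde u_k)(\xi',0,s)=\tilde a_k(\xi',s)\quad(1\le k<n),\qquad \tilde u_n(\xi',0,s)=\tilde a_n(\xi',s).
\]
So both differences in \eqref{cor_bc2} vanish pointwise at $x_n=0$, and by continuity tend to zero pointwise as $x_n\to 0_+$.

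\emph{Step 2: $L^2$ convergence via DCT.} I rewrite each difference as a finite sum of terms $M(\xi',s)\,\tilde a_j(\xi',s)\,D(\xi',x_n,s)$, where $D\in\{e^{-x_n b}-1,\,e^{-x_n b}-e^{-x_n|\xi'|}\}$, and then collect terms sharing the same $\tilde a_j$. Using $e^{-x_n b}-1=(e^{-x_n|\xi'|}-1)+(e^{-x_n b}-e^{-x_n|\xi'|})$ and the algebraic identity
\[
1+\frac{b(b+\alpha)}{(|\xi'|-b)(|\xi'|+b+\alpha)}=\frac{|\xi'|(|\xi'|+\alpha)}{(|\xi'|-b)(|\xi'|+b+\alpha)},
\]
the $\tilde a_n$-contribution to $\tilde u_n-\tilde a_n$ becomes
\[
\tilde a_n\Big[(e^{-x_n|\xi'|}-1)+\frac{|\xi'|(|\xi'|+\alpha)}{(|\xi'|-b)(|\xi'|+b+\alpha)}(e^{-x_n b}-e^{-x_n|\xi'|})\Big].
\]
The crucial bound
\[
\Big|\frac{e^{-x_n b}-e^{-x_n|\xi'|}}{|\xi'|-b}\Big|\le x_n\,e^{-x_n|\xi'|}
\]
follows from $|\xi'|-b=-is/(|\xi'|+b)$, $|b-|\xi'||=|s|/|b+|\xi'||$, and $|e^{-x_n(b-|\xi'|)}-1|\le x_n|b-|\xi'||$. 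Combined with $(|\xi'|+\alpha)/||\xi'|+b+\alpha|\le 1$, $|e^{-x_n|\xi'|}-1|\le 2$, and the elementary estimate $x_n|\xi'|e^{-x_n|\xi'|}\le 1/e$, the bracket is bounded by a universal constant, giving majorant $C|\tilde a_n|\in L^2$. The $\tilde a_h$-contributions ($h<n$) are bounded by $x_n|\tilde a_h|\le|\tilde a_h|\in L^2$. Analogous manipulations handle $\partial_n\tilde u_k-\alpha\tilde u_k-\tilde a_k$. Dominated convergence then delivers $L^2$ convergence on the Fourier side, and Plancherel transports it back to the physical side.

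\emph{Main obstacle.} The naive estimate of $|\xi'|\psi$ itself in $L^2$ fails, so one cannot bound each piece of \eqref{FT of u} separately (for example, the bound $x_n|b||\tilde a_n|$ is not $L^2$ uniformly in $x_n$). The only way forward is the algebraic bookkeeping above: one must first collect terms sharing the same $\tilde a_j$ factor, apply the cancellations to eliminate the $1/s$ and $1/(|\xi'|-b)$ singularities, and only then estimate. Tracking these cancellations correctly, term by term, is the nontrivial part of the proof; once it is done, DCT and Plancherel conclude routinely.
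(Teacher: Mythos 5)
Your overall strategy is the paper's: work on the Fourier side, exploit $|L|\le x_n e^{-x_n|\xi'|}$ for $L=\frac{e^{-x_n b}-e^{-x_n|\xi'|}}{|\xi'|-b}$ (the same inequality appears in the paper's proof), and apply dominated convergence. Your treatment of the normal-velocity condition $\tilde u_n\to\tilde a_n$ is cleanly executed: the cancellation
\[
1+\frac{b(b+\alpha)}{(|\xi'|-b)(|\xi'|+b+\alpha)}=\frac{|\xi'|(|\xi'|+\alpha)}{(|\xi'|-b)(|\xi'|+b+\alpha)}
\]
is exactly what eliminates the $b^2$ growth, and the resulting majorant $C|\tilde a_n|$ only needs $\tilde{\bm a}\in L^2$.

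However, there is a real gap in the claim that ``analogous manipulations handle $\partial_n\tilde u_k-\alpha\tilde u_k-\tilde a_k$.'' It is not analogous. In the expression
\[
\partial_n\tilde u_k-\alpha\tilde u_k=\tilde a_k e^{-x_nb}+i\xi_k(|\xi'|+\alpha)\psi\bigl(e^{-x_nb}-e^{-x_n|\xi'|}\bigr),
\]
the ``free'' boundary datum is $\tilde a_k$, not $\tilde a_n$. So when you collect by $\tilde a_j$, there is no ``$1$'' (from $e^{-x_nb}-1$) to pair with the $\tilde a_n$-coefficient, and the identity above has nothing to cancel against. The $\tilde a_n$ part reads
\[
\frac{i\xi_k(|\xi'|+\alpha)\,b(b+\alpha)\,\tilde a_n}{|\xi'|(|\xi'|-b)(|\xi'|+b+\alpha)}\bigl(e^{-x_nb}-e^{-x_n|\xi'|}\bigr),
\]
and if you simply apply the $L$-bound here, the factor $b(b+\alpha)$ leaves an uncontrolled factor of $|b|\sim\sqrt{|s|}$, so there is no $L^2$ majorant uniform in $x_n$. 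One must first expand $b(b+\alpha)=(b-|\xi'|)(b+|\xi'|)+|\xi'|^2+\alpha b$ so that the $(b-|\xi'|)$ piece cancels the denominator $(|\xi'|-b)$ (the remaining factor is then bounded without $L$, using only $|e^{-x_nb}-e^{-x_n|\xi'|}|\le2$), while the $|\xi'|^2+\alpha b$ piece is estimated via the $L$-bound and $|b|\le\bigl||\xi'|+b+\alpha\bigr|$. This is precisely the paper's $R_1$ decomposition, and it is a different piece of algebra from the identity you used for $\tilde u_n$. It also produces the majorant $|\tilde{\bm a}|+|\xi'||\tilde{\bm a}|$, which is where the hypothesis $\bm a\in C_c^{1,0}$ (giving $|\xi'|\tilde{\bm a}\in L^2$) is actually needed; this point is absent from your writeup, which suggests the tangential case was not checked. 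The plan is salvageable, but the tangential boundary condition is where the proof's real work lies, and you would need to carry out this separate cancellation explicitly.
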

\begin{proof}
For $\bm{a}(x',t)\in C_c^{1,0}(\R^{n-1}\times \R)$, we have that $\tilde{\bm{a}}(\xi',s)\in L^2$ and $\tilde{\bm{a}}(\xi',s)|\xi'|\in L^2$. By \eqref{FT of u} we have for $1\leq k\leq n-1$,
\begin{align*}
(\pd_n\tilde{u}_k-\alpha \tilde{u}_k)(\xi',x_n,s)&=\tilde{a}_ke^{-x_n\sqrt{is+|\xi'|^2}}+i\xi_k(|\xi'|+\alpha)\left(e^{-x_{n}\sqrt{is+|\xi^{\prime}|^2}}-e^{-x_{n}|\xi^{\prime}|}\right)\psi,\\
\tilde{u}_{n}(\xi',x_n,s)&=\tilde{a}_{n}e^{-x_{n}\sqrt{is+|\xi^{\prime}|^2}}+|\xi^\prime|\left(e^{-x_{n}\sqrt{is+|\xi^{\prime}|^2}}-e^{-x_{n}|\xi^{\prime}|}\right)\psi,
\end{align*}
where $\psi$ is defined in \eqref{psi}.
By dominated convergence theorem, we have that $\tilde{a}_ke^{-x_n\sqrt{is+|\xi'|^2}}$ $\to\tilde{a}_k$ in $L^2$. Denote 
\[L=\frac{e^{-x_{n}\sqrt{is+|\xi^{\prime}|^2}}-e^{-x_{n}|\xi^{\prime}|}}{\sqrt{is+|\xi^{\prime}|^2}-|\xi'|}.\]
It's not hard to prove $|L|\leq x_ne^{-x_n|\xi'|}$, and hence $|\xi'|L$ is bounded. Notice that, using $is=b^2-|\xi'|^2$,
\begin{align*}
R_1:&=i\xi_k(|\xi'|+\alpha)\left(e^{-x_{n}\sqrt{is+|\xi^{\prime}|^2}}-e^{-x_{n}|\xi^{\prime}|}\right)\psi\\
&=\frac{-i\xi_k(|\xi'|+\alpha)\left(\sum_{h=1}^{n-1}i\xi_{h}\tilde{a}_{h}+(\alpha\sqrt{is+|\xi^{\prime}|^2}+|\xi'|^2) \tilde{a}_{n}\right)}{|\xi^{\prime}|\left(|\xi'|+\sqrt{is+|\xi'|^2}+\alpha\right)}L\\
&\quad-\frac{i\xi_k(|\xi'|+\alpha)\left(|\xi'|+\sqrt{is+|\xi'|^2}\right)\left(e^{-x_{n}\sqrt{is+|\xi^{\prime}|^2}}-e^{-x_{n}|\xi^{\prime}|}\right)\tilde{a}_n}{|\xi'|\left(|\xi'|+\sqrt{is+|\xi'|^2}+\alpha\right)}.
\end{align*}
We obtain that if $x_n\leq 1$,
\begin{equation}\label{R1.est}
|R_1|\lesssim |\tilde{\bm{a}}|+|\tilde{\bm{a}}||\xi'|.
\end{equation}
By dominated convergence theorem, we have that $R_1$ converge to $0$ in $L^2$. Hence, we have $\pd_n\tilde{u}_k-\alpha \tilde{u}_k\to \tilde{a}_k$ in $L^2$. Similarly, we can prove $\tilde{u}_{n}\to \tilde{a}_n$ in $L^2$. 
\end{proof}

\section{Estimate of Poisson kernel}\label{Sec 4}

In this section, we give upper bound estimates of the Poisson kernel derived in Lemma \ref{lem_p0} and Lemma \ref{lem_pa}, and extend its usage to non-smooth boundary value $\bm{a}$.
\begin{lem}\label{lem_estP}
Let $t>0$.
For $1\leq k\leq n$ and $1\leq j \leq n$,
\begin{align}\label{E P0}
|\pd_t^m\pd_x^lP^0_{kj}|\lesssim \frac{1}{(|x|^2+t)^{\frac{n+l+\delta_{jn}}{2}+m}}.
\end{align}
For $1\leq k\leq n$ and $1\leq j\leq n-1$,
\begin{equation} \label{E Pkj}
|\partial_t^m\partial_{x^{\prime}}^{l}\partial_{x_n}^{i} P_{kj}^{\alpha}|\lesssim \frac{1}{\left(|x|^2+t\right)^{\frac{l+n-1}{2}}t^m}\left(\delta_{i=0}+\frac{\delta_{i\geq1}}{\left(|x|^2+t\right)^{\frac{1}{2}}\left(x_n^2+t\right)^{\frac{i-1}{2}}}\right).
\end{equation}
For $1\leq k\leq n$ and $\sigma=\delta_{k<n}$, 
\begin{equation}\label{E Pkn}
|\partial_t^m\partial_{x^{\prime}}^{l}\partial_{x_n}^{i} P_{kn}^{\alpha}|\lesssim \frac{1}{\left(|x|^2+t\right)^{\frac{l+n-\sigma}{2}}\left(x_n^2+t\right)^{\frac{i+\sigma}{2}}t^{m}}.
\end{equation}
\end{lem}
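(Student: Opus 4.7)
My plan is to handle the three bounds \eqref{E P0}, \eqref{E Pkj}, \eqref{E Pkn} in turn, combining the explicit formulas derived in Lemmas \ref{lem_p0}, \ref{lem_pa} with standard heat-kernel estimates and with Lemma \ref{lemAA}. The decay rate $(|x|^2+t)^{-1/2}$ corresponds to the isotropic spatial factor in \eqref{AA}, while $(x_n^2+t)^{-1/2}$ corresponds to the anisotropic $(z^2+t)$-factor, so the core of the proof is correctly routing each derivative to the proper factor.

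First, for \eqref{E P0}, I would notice that for $t>0$ the Dirac contributions in \eqref{3.11} vanish, so each $P_{kj}^0$ reduces to a linear combination of $\Gamma$, $\partial_n\Gamma$ and $\partial_k\partial_j \int_t^\infty \Gamma \,ds$ (with one extra $\partial_n$ when $j=n$). Gaussian bounds yield $|\partial_t^m \partial_x^l \Gamma(x,t)|\lesssim (|x|^2+t)^{-(n+l)/2-m}$, and for $r\ge 1$ one has $|\partial_x^r \int_t^\infty \Gamma(x,s)\,ds|\lesssim (|x|^2+t)^{(2-n-r)/2}$, either directly from integrating Gaussian decay or, when $n=2$, by writing $\int_t^\infty=\int_0^\infty-\int_0^t$ and using \eqref{ET}. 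The bookkeeping delivers exactly the exponent $(n+l+\delta_{jn})/2+m$.

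Second, for \eqref{E Pkj} with $1\le j\le n-1$, I would use the decomposition \eqref{pkj1}. The shifted heat kernel $I_1$ is routine. For $I_2, I_3, I_4$ I would rewrite each integrand using the auxiliary function $A(x',w,z,t)$ of \eqref{Awz}; for instance
\[
I_3=\int_0^\infty e^{-\alpha z}\!\int_0^\infty \bigl[A(x',y_n+z,x_n+z-y_n,t)+A(x',y_n+z,x_n+z+y_n,t)\bigr]\,dy_n\,dz,
\]
so that $\partial_{x_n}$ corresponds to the $\partial_z$ of Lemma \ref{lemAA} and $\partial_{x'}$ corresponds to $\partial_{x'}$ there. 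Plugging $\partial_t^m \partial_{x'}^l \partial_{x_n}^i$ inside and applying \eqref{AA} reduces the task to integrating the resulting bound against $e^{-\alpha z}\le 1$ in the auxiliary variables. The Gaussian factor $e^{-z^2/(10t)}$ (evaluated at the shifted $z$-argument) confines the effective range of the auxiliary integrations to a $O(\sqrt t)$ window, producing the advertised anisotropic estimate. The dichotomy $\delta_{i=0}$ vs.\ $\delta_{i\ge 1}$ in \eqref{E Pkj} appears because the first normal derivative can be routed into the $(z^2+t)^{-(k+1)/2}$ factor of \eqref{AA} (gaining one $(x_n^2+t)^{-1/2}$), whereas horizontal derivatives only produce the isotropic $(|x|^2+t)^{-1/2}$.

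Finally, estimate \eqref{E Pkn} follows the same template applied to the decomposition \eqref{pkj2} and the kernels $J_1, J_2, J_3$ of \eqref{def of J}: each $J_i$ carries one additional normal derivative on $E$ or $\Gamma$ relative to the corresponding $I_i$, and this extra derivative converts into the additional $(x_n^2+t)^{-1/2}$ factor matching the exponent $(i+\sigma)/2$ in \eqref{E Pkn}, where $\sigma=\delta_{k<n}$ is the bonus that appears precisely when the prefactor $\delta_{k<n}$ is active in \eqref{pkj2}. The main obstacle is therefore the careful accounting in Step 2: one must track which derivatives land on the first slot of $A$ versus the third, respect the change of variables $w\mapsto y_n+z$, $z\mapsto x_n+z\pm y_n$ hidden inside the $I_j$ and $J_j$, and verify that after applying Lemma \ref{lemAA} and integrating out $e^{-\alpha z}$ the product of factors telescopes to the stated anisotropic bound uniformly in $\alpha$.
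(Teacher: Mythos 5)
Your treatment of \eqref{E P0} agrees with the paper, and your idea of expressing $I_2,I_3,I_4$ and $J_1,J_2,J_3$ through $A(x',w,z,t)$ and then calling Lemma~\ref{lemAA} is the right framework. However, there is a genuine gap in how you route the normal derivatives, and it is fatal for \eqref{E Pkj} and \eqref{E Pkn}.

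You propose to write, e.g.,
\[
I_3=\int_0^\infty e^{-\alpha z}\!\int_0^\infty \bigl[A(x',y_n+z,x_n+z-y_n,t)+A(x',y_n+z,x_n+z+y_n,t)\bigr]\,dy_n\,dz,
\]
and then let ``$\partial_{x_n}$ correspond to the $\partial_z$ of Lemma~\ref{lemAA}.'' But if you put $i$ normal derivatives on the third slot of $A$ (the $\Gamma$ argument), estimate \eqref{AA} gives you $(|x'|^2+w^2+z^2+t)^{-(l+n-2)/2}\cdot(z^2+t)^{-(i+1)/2-m}$, i.e.\ the normal derivatives contribute \emph{no} isotropic $|x|$-decay. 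After integrating out $y_n$ and $z$ you only obtain something of the order $(|x|^2+t)^{-(l+n-1)/2}\,t^{-i/2-m}$, which is strictly weaker than \eqref{E Pkj}: as $t\to0$ with $|x|,x_n>0$ fixed, your bound blows up like $t^{-i/2}$ while the claimed bound stays bounded. So this route simply does not close.

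What the paper does — and what you need — is to first integrate by parts in $y_n$, moving the normal derivatives from the $\Gamma$-slot onto the $E$-slot ($\partial_{x_n}=-\partial_{y_n}$ on the argument $x_n+z-y_n$). After that the derivatives sit in the $\partial_w$-position of Lemma~\ref{lemAA}, whose bound carries them into the \emph{isotropic} factor $(|x'|^2+w^2+z^2+t)^{-(l+i+n-2)/2}$; this is exactly the source of the extra $(|x|^2+t)^{-1/2}$ factor when $i\ge1$. This step is nontrivial (it also spawns boundary terms at $y_n=0$, the $\delta_{i\ge2}$ sums in $\partial_{x_n}^iI_3,\partial_{x_n}^iJ_2$, and it is what makes the constraint $l+j+n\ge3$ in Lemma~\ref{lemAA} checkable when $n=2$). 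Relatedly, your heuristic for the dichotomy is reversed: in \eqref{E Pkj}, passing from $i=0$ to $i=1$ gains a full $(|x|^2+t)^{-1/2}$ (the first normal derivative lands on $E$), and only the remaining $i-1$ derivatives produce the weaker $(x_n^2+t)^{-1/2}$ factors, not the other way around. Without this integration by parts the argument does not prove \eqref{E Pkj} or \eqref{E Pkn}.
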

\begin{rem}
When we evaluate the velocity $\bm{u}$ given by \eqref{split of u} and Lemmas \ref{lem_p0} and \ref{lem_pa},
$P_{kn}^\alpha$ with $1\leq k \leq n-1$ are more likely the source of singularity for high-order derivatives, because one factor of $(|x|^2+t)^{\frac{1}{2}}$ becomes $(x_n^2+t)^{\frac{1}{2}}$ in \eqref{E Pkn}. 
Moreover, comparing \eqref{E Pkn} with the estimate of the Golovkin tensor in  \cite{Solonnikov1964} or \cite[(2.20)]{Kang2023},
\[
|\partial_t^m\partial_{x^{\prime}}^{l}\partial_{x_n}^{i} K_{kn}|\lesssim \frac{1}{\left(|x|^2+t\right)^{\frac{l+n-\sigma}{2}}\left(x_n^2+t\right)^{\frac{i+\sigma}{2}}t^{m+\frac12}},\quad 1\leq k \leq n
,\]
the only difference between the estimates of $K_{kn}$ and $P_{kn}^{\alpha}$ is the exponent of $t$.
\end{rem}

\begin{proof}
For $P^0_{kj}(x,t)$ given by \eqref{3.10} and \eqref{3.11}, by
\begin{equation}\label{gam1}
|\partial_t^m\partial_x^l\Gamma(x,t)|\lesssim \frac{1}{(|x|^2+t)^{\frac{l+n}{2}+m}}e^{-\frac{|x|^2}{10t}},
\end{equation}
and for $2m+l+n\geq 3$,
\begin{equation}\label{33}
\Big|\partial_t^m\partial_x^l\int_t^{\infty}\Gamma(x,s)\ds\Big|\lesssim \frac{1}{(|x|^2+t)^{\frac{l+n-2}{2}+m}},
\end{equation}
we get \eqref{E P0}.

Next, for $P^\al_{kj}$, $j<n$, given in \eqref{pkj1}-\eqref{def of I}, we estimate $I_1$--$I_4$. We have
\begin{align*}
|\partial_t^m\partial_{x^{\prime}}^{l}\partial_{x_n}^{i} I_1|&\lesssim \int_{0}^{+\infty}\frac{1}{\left(|x|^2+z^2+t\right)^{\frac{l+i+n}{2}+m}}\dz\notag\\
&\lesssim \frac{1}{\left(|x|^2+t\right)^{\frac{l+i+n-1}{2}+m}}.
\end{align*}
Since 
$$I_2=\int_{0}^{+\infty}\!\!\int_{0}^{+\infty}e^{-\alpha (w+z)}A(x^{\prime},z,x_n+w+z)\dw\dz,$$ 
by Lemma \ref{lemAA} and
\begin{equation}\label{ze}
\int_{-\infty}^{+\infty}e^{-\frac{z^2}{10t}}\dz\lesssim\,t^{\frac{1}{2}},
\end{equation}
we have that for $l+n\geq 3$ 
\begin{align*}
|\partial_t^m\partial_{x^{\prime}}^{l}\partial_{x_n}^{i} I_2|&\lesssim \int_{0}^{+\infty}\!\!\int_{0}^{+\infty}\frac{1}{\left(|x|^2+t\right)^{\frac{l+n-2}{2}}\left(x_n^2+t\right)^{\frac{i+1}{2}+m}} e^{-\frac{w^2+z^2}{10t}}\dw\dz\notag\\
&\lesssim \frac{t^{\frac{1}{2}}}{\left(|x|^2+t\right)^{\frac{l+n-2}{2}}\left(x_n^2+t\right)^{\frac{i}{2}+m}}.
\end{align*}
By integration by parts in variable $y_n$ (to gain decay in $|x|$ when we apply Lemma \ref{lemAA}), we have
\begin{align*}
\partial_{x_n}^{i}I_3&=\int_{0}^{+\infty}e^{-\alpha z}\int_{\R_+^{n}}\partial_n^{i} E(x^{\prime}-y^{\prime},y_n+z)\Gamma(y^{\prime},x_{n}+z-y_n,t)\dy\dz\\
&\quad+(-1)^{i}\int_{0}^{+\infty}e^{-\alpha z}\int_{\R_+^{n}}\partial_n^{i} E(x^{\prime}-y^{\prime},y_n+z)\Gamma(y^{\prime},x_{n}+z+y_n,t)\dy\dz\\
&\quad+2\delta_{i\geq2}\Sigma_{0\leq\tau\leq\frac{i}{2}-1}\int_{0}^{+\infty}e^{-\alpha z}\int_{\R^{n-1}}\partial_n^{2\tau+1} E(x^{\prime}-y^{\prime},z)\partial_{n}^{i-2\tau-2}\Gamma(y^{\prime},x_{n}+z,t)\dy^{\prime}\dz.
\end{align*}          
Therefore, by Lemma \ref{lemAA} and \eqref{ze}, we have that for $l+i+n\geq 4$, 
\begin{align*}
|\partial_t^m\partial_{x^{\prime}}^{l}\partial_{x_n}^{i} I_3|&\lesssim \int_{0}^{+\infty}\int_{0}^{+\infty}\frac{e^{-\frac{(x_n+z-y_n)^2}{10t}}}{\left(|x|^2+z^2+t\right)^{\frac{l+i+n-2}{2}}t^{\frac{1}{2}+m}}\dy_{n}\dz\notag\\
&\quad+\int_{0}^{+\infty}\int_{0}^{+\infty}\frac{e^{-\frac{(x_n+z+y_n)^2}{10t}}}{\left(|x|^2+z^2+t\right)^{\frac{l+i+n-2}{2}}t^{\frac{1}{2}+m}}\dy_{n}\dz\notag\\
&\quad+\delta_{i\geq2}\int_{0}^{+\infty}\frac{e^{-\frac{z^2}{10t}}}{\left(|x|^2+t\right)^{\frac{l+n-1}{2}}\left(x_n^2+t\right)^{\frac{i-1}{2}}t^m}\dz\notag\\
&\lesssim \frac{1}{\left(|x|^2+t\right)^{\frac{l+i+n-3}{2}}t^m}+\frac{\delta_{i\geq2}}{\left(|x|^2+t\right)^{\frac{l+n-1}{2}}\left(x_n^2+t\right)^{\frac{i-2}{2}}t^m}.
\end{align*}
Note that in the first line above we have used  $|y_n+z|+|x_n+z-y_n|\geq x_n+2z$. For the two double integrals, we have first integrated in $y_n$ using \eqref{ze}, and then integrated in $z$ using $l+i+n-2>1$.

Similarly, we have that for $l+i+n\geq 4$,
\begin{align*}
|\partial_t^m\partial_{x^{\prime}}^{l}\partial_{x_n}^{i} I_4|&\lesssim \int_{0}^{+\infty}\int_{0}^{+\infty}\frac{e^{-\frac{(z-y_n)^2}{10t}}}{\left(|x|^2+z^2+t\right)^{\frac{l+i+n-2}{2}}t^{\frac{1}{2}+m}}\dy_{n}\dz\notag\\
&\quad+\int_{0}^{+\infty}\int_{0}^{+\infty}\frac{e^{-\frac{(z+y_n)^2}{10t}}}{\left(|x|^2+z^2+t\right)^{\frac{l+i+n-2}{2}}t^{\frac{1}{2}+m}}\dy_{n}\dz\notag\\
&\lesssim \frac{1}{\left(|x|^2+t\right)^{\frac{l+i+n-3}{2}}t^m}.
\end{align*}
Therefore, by \eqref{pkj1}, we arrive at \eqref{E Pkj}.

Now for $P^\al_{kn}$ given in \eqref{pkj2}-\eqref{def of J}, we estimate $J_1$--$J_3$. By Lemma \ref{lemAA} and \eqref{ze}, we have that for $l+n\geq 3$ 
\begin{align}
|\partial_t^m\partial_{x^{\prime}}^{l}\partial_{x_n}^{i} J_1|&\lesssim e^{-\frac{x_n^2}{10t}} \int_{0}^{+\infty}\frac{1}{\left(|x|^2+z^2+t\right)^{\frac{l+n-2}{2}}\left(x_n^2+t\right)^{\frac{1+i}{2}+m}} e^{-\frac{z^2}{10t}}\dz \notag\\
&\lesssim \frac{1}{\left(|x|^2+t\right)^{\frac{l+n-2}{2}}\left(x_n^2+t\right)^{\frac{i}{2}+m}}e^{-\frac{x_n^2}{10t}}.\label{J_1}
\end{align}
Analogous to the estimates of $I_3$, by integration by parts in variable $y_n$, we have 
\begin{align}
\nonumber 
\partial_{x_n}^{i}J_2&=\int_{0}^{+\infty}e^{-\alpha z}\int_{\R_+^{n}}\partial_n^{i+1} E(x^{\prime}-y^{\prime},y_n+z)\Gamma(y^{\prime},x_{n}+z-y_n,t)\dy\dz\\
\label{J2 derivative}
&\quad+(-1)^{i}\int_{0}^{+\infty}e^{-\alpha z}\int_{\R_+^{n}}\partial_n^{i+1} E(x^{\prime}-y^{\prime},y_n+z)\Gamma(y^{\prime},x_{n}+z+y_n,t)\dy\dz\\
&\quad+2\delta_{i\geq2}\Sigma_{0\leq\tau\leq\frac{i}{2}-1}\int_{0}^{+\infty}e^{-\alpha z}\int_{\R^{n-1}}\partial_n^{2\tau+2} E(x^{\prime}-y^{\prime},z)\partial_{n}^{i-2\tau-2}\Gamma(y^{\prime},x_{n}+z,t)\dy^{\prime}\dz.
\nonumber
\end{align}
By Lemma \ref{lemAA} and \eqref{ze}, we have that for $l+i+n\geq 3$,
\begin{align}
|\partial_t^m\partial_{x^{\prime}}^{l}\partial_{x_n}^{i} J_2|&\lesssim \int_{0}^{+\infty}\int_{0}^{+\infty}\frac{e^{-\frac{(x_n+z-y_n)^2}{10t}}}{\left(|x|^2+z^2+t\right)^{\frac{l+i+n-1}{2}}t^{\frac{1}{2}+m}}\dy_{n}\dz\notag\\
&\quad+\int_{0}^{+\infty}\int_{0}^{+\infty}\frac{e^{-\frac{(x_n+z+y_n)^2}{10t}}}{\left(|x|^2+z^2+t\right)^{\frac{l+i+n-1}{2}}t^{\frac{1}{2}+m}}\dy_{n}\dz\notag\\
&\quad+\delta_{i\geq2}\int_{0}^{+\infty}\frac{e^{-\frac{z^2}{10t}}}{\left(|x|^2+t\right)^{\frac{l+n}{2}}\left(x_n^2+z^2+t\right)^{\frac{i-1}{2}}t^m}\dz\notag\\
&\lesssim \frac{1}{\left(|x|^2+t\right)^{\frac{l+i+n-2}{2}}t^m}+\frac{\delta_{i\geq2}}{\left(|x|^2+t\right)^{\frac{l+n}{2}}\left(x_n^2+t\right)^{\frac{i-2}{2}}t^m}.\label{J_2}
\end{align}
By integration by parts in variable $y_n$, we have
\begin{align*}
J_3&=\int_{0}^{+\infty}e^{-\alpha z}\int_{\R_+^{n}}\partial_n E(x^{\prime}-y^{\prime},x_n+y_n+z)\Gamma(y^{\prime},z-y_n,t)\dy\dz\\
&\quad+\int_{0}^{+\infty}e^{-\alpha z}\int_{\R_+^{n}}\partial_n E(x^{\prime}-y^{\prime},x_{n}+y_n+z)\Gamma(y^{\prime},z+y_n,t)\dy\dz\\
&\quad+2\int_{0}^{+\infty}e^{-\alpha z}\int_{\R^{n-1}} E(x^{\prime}-y^{\prime},x_n+z)\Gamma(y^{\prime},z,t)\dy^{\prime}\dz.
\end{align*}
By Lemma \ref{lemAA} and \eqref{ze}, we have  that for $l+i+n\geq 3$,
\begin{align}
|\partial_t^m\partial_{x^{\prime}}^{l}\partial_{x_n}^{i} J_3|&\lesssim \int_{0}^{+\infty}\int_{0}^{+\infty}\frac{e^{-\frac{(z-y_n)^2}{10t}}}{\left(|x|^2+z^2+t\right)^{\frac{l+i+n-1}{2}}t^{\frac{1}{2}+m}}\dy_{n}\dz\notag\\
&\quad+\int_{0}^{+\infty}\int_{0}^{+\infty}\frac{e^{-\frac{(z+y_n)^2}{10t}}}{\left(|x|^2+z^2+t\right)^{\frac{l+i+n-1}{2}}t^{\frac{1}{2}+m}}\dy_{n}\dz\notag\\
&\quad+\int_{0}^{+\infty}\frac{e^{-\frac{z^2}{10t}}}{\left(|x|^2+t\right)^{\frac{l+i+n-2}{2}}t^{\frac{1}{2}+m}}\dz\notag\\
&\lesssim \frac{1}{\left(|x|^2+t\right)^{\frac{l+i+n-2}{2}}t^m}.\label{J_3}
\end{align}
Therefore, by \eqref{pkj2}, we have that for $1\leq k\leq n-1$,
\begin{equation}
|\partial_t^m\partial_{x^{\prime}}^{l}\partial_{x_n}^{i} P_{kn}^{\alpha}|\lesssim \frac{1}{\left(|x|^2+t\right)^{\frac{l+n-1}{2}}\left(x_n^2+t\right)^{\frac{i+1}{2}}t^m},
\end{equation}
and for $k=n$,
\begin{equation}
|\partial_t^m\partial_{x^{\prime}}^{l}\partial_{x_n}^{i} P_{nn}^{\alpha}|\lesssim \frac{1}{\left(|x|^2+t\right)^{\frac{l+n}{2}}\left(x_n^2+t\right)^{\frac{i}{2}}t^m}.
\end{equation}
These show \eqref{E Pkn}.
\end{proof}

\begin{lem}\label{lem_Lp P0kn}
Let $1\leq k\leq n$ and $1<q<\infty$. For all
$a(x',t) \in C^\infty_c(\R^{n-1}\times \R)$, we have
\begin{equation}\label{44}
\|(P^0_{kn}*a)(x',x_n,t)\|_{L^q_{x',t}}\leq C\|a\|_{L^q_{x',t}},
\end{equation}
where $C$ is independent of $x_n>0$. Hence the convolution $P^0_{kn}*a$ has a unique extension to all $a(x',t)\in L^q(\R^{n-1}\times \R)$ that satisfies \eqref{44}.
\end{lem}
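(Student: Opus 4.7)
My plan is to prove this by Fourier analysis in the variables $(x',t)$, treating $x_n>0$ as a parameter, and then extrapolating from $L^2$ to $L^q$ via parabolic Calder\'on--Zygmund theory.

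I would first split $P^0_{kn}$ according to \eqref{3.11} into an instantaneous-in-time piece $-2\delta(t)\partial_k E(x)$ and a regular piece
\[
K_{\mathrm{reg}}(x,t) = -2\delta_{kn}\partial_n\Gamma(x,t) - 2\mathcal{H}(t)\,\partial_k\partial_n^2\int_t^{\infty}\Gamma(x,s)\,ds,
\]
which is an ordinary function for $t>0$. The $\delta(t)$-piece acts on $a$ as the pure spatial convolution $-2\,\partial_k E(\cdot,x_n)\ast_{x'}a(\cdot,t)$, whose Fourier symbol in $x'$ is (a constant times) $i\xi_k|\xi'|^{-1}e^{-x_n|\xi'|}$ when $k<n$ and $e^{-x_n|\xi'|}$ when $k=n$. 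Both are bounded Mihlin multipliers on $\R^{n-1}$ with constants independent of $x_n>0$; hence $\|\partial_k E(\cdot,x_n)\ast_{x'}f\|_{L^q(\R^{n-1})}\le C\|f\|_{L^q(\R^{n-1})}$, and integrating the $q$-th power in $t$ delivers the desired bound for this piece.

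For $K_{\mathrm{reg}}$, I would take $\mathcal{F}_{x',t}$ and read the symbol $m_k(\xi',x_n,s)$ off the derivation in Section~\ref{Sec 3}: it is the coefficient of $\tilde a_n$ in \eqref{FT of u0} with $\alpha=0$. Setting $b=\sqrt{|\xi'|^2+is}$ and using $b^2 = is+|\xi'|^2$, the symbol reduces to combinations of the two building blocks
\[
\frac{be^{-x_nb}-|\xi'|e^{-x_n|\xi'|}}{is}\quad\text{and}\quad \frac{|\xi'|^2}{is}\bigl(e^{-x_nb}-e^{-x_n|\xi'|}\bigr),
\]
plus manifestly bounded terms. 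Exploiting the identity $b-|\xi'|=is/(b+|\xi'|)$, the lower bound $\mathrm{Re}(b)\ge c(|\xi'|+|s|^{1/2})$, and the elementary fact that $|y e^{-y}|$ is bounded for $\mathrm{Re}(y)\ge 0$, I would show $\|m_k\|_{L^{\infty}(\R^{n-1}\times\R)}\le C$ uniformly in $x_n>0$. Plancherel in $(x',t)$ then yields the $L^2$-bound for $K_{\mathrm{reg}}\ast_{x',t}$.

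To extend to general $1<q<\infty$ I would verify the parabolic Calder\'on--Zygmund kernel conditions for $K_{\mathrm{reg}}$. The size bound $|K_{\mathrm{reg}}(x',x_n,t)|\lesssim (|x'|^2+x_n^2+t)^{-(n+1)/2}$ is the case $j=n$, $l=m=0$ of \eqref{E P0}, and the corresponding H\"ormander-type estimates for $\nabla_{x'}K_{\mathrm{reg}}$ and $\partial_t K_{\mathrm{reg}}$ come from the same lemma with $l=1$ or $m=1$; with respect to the parabolic metric $(|x'|^2+|t|)^{1/2}$ these make $K_{\mathrm{reg}}$ a standard CZ kernel on $\R^{n-1}\times\R$ with constants independent of $x_n$, so the $L^q$-bound follows by extrapolation from the $L^2$ bound. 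The main technical hurdle is the uniform symbol estimate: one must use the cancellation in $b-|\xi'|=is/(b+|\xi'|)$ to absorb the singular factor $1/(is)$ as $s\to 0$, and simultaneously use the exponential factor $e^{-x_n|\xi'|}$ to control terms like $x_n|\xi'|$ that would otherwise blow up in $x_n$.
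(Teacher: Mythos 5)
Your proposal is correct and follows essentially the paper's route for $k<n$: split off the $\delta(t)\partial_k E$ piece as a spatial singular integral uniform in $x_n$, prove an $L^2$ bound for the remaining piece via a uniform Fourier-symbol estimate exploiting the cancellation $\sqrt{is+|\xi'|^2}-|\xi'|=is/(\sqrt{is+|\xi'|^2}+|\xi'|)$, and extrapolate to $1<q<\infty$ via (parabolic) Calder\'on--Zygmund theory using the kernel bounds of Lemma~\ref{lem_estP}. The paper's only simplification, which your argument does not exploit, is that for $k=n$ every piece of $P^0_{nn}$ has a kernel bounded in $L^1_{x',t}$ (respectively $\partial_n E(\cdot,x_n)\in L^1_{x'}$) uniformly in $x_n>0$, so Young's convolution inequality already yields \eqref{44} and the Calder\'on--Zygmund machinery is needed only for $1\le k\le n-1$.
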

\begin{proof}
Recall $P^0_{kn}(x,t)$ is given by \eqref{3.11}. It suffices to show \eqref{44} for $a \in C^\infty_c$.
It is easy to check that
\begin{equation}\label{555}
\|\pd_n\Gamma(x',x_n,t)\|_{L^1_{x',t}}+ \|\pd_n E(x',x_n)\|_{L^1_{x'}}\leq C,
\end{equation}
where the constant $C$ is independent of $x_n$. As for $f(x,t)=\mathcal{H}(t)\pd^3_n\int_t^{\infty}\Gamma(x,s)ds$, it's not hard to see $f(x,t)=\frac{1}{x_n^{n+1}}f(\frac{x'}{x_n},1,\frac{t}{x_n^2})$. Thus, 
\begin{equation*}
\|f(x',x_n,t)\|_{L^1_{x',t}}=\|f(x',1,t)\|_{L^1_{x',t}}\leq C.
\end{equation*}
Hence, by Young's inequality, we obtain \eqref{44} for $k=n$. 

For $1\leq k\leq n-1$, by \cite[page 29, Theorem 1]{Stein70}, we can prove 
\begin{equation}\label{ECZ}
\|\pd_kE(x) *_{x'}a\|_{L^q_{x',t}}\leq C \|a\|_{L^q_{x',t}}.
\end{equation}  
As for $g(x,t)=\mathcal{H}(t)\pd_k\pd^2_n\int_t^{\infty}\Gamma(x,s)\ds$ with $1\leq k \leq n-1$, by \eqref{CZ2}, we have
\begin{align*}
\mathcal{F}_{x',t}(g)&=\frac{1}{2(2\pi)^{\frac{n}{2}}} \frac{i\xi_{k}}{is}\left(\sqrt{is+|\xi^{\prime}|^2}e^{-x_{n}\sqrt{is+|\xi^{\prime}|^2}}-|\xi'|e^{-x_{n}|\xi^\prime|}\right)\\
&=\frac{1}{2(2\pi)^{\frac{n}{2}}} \frac{i\xi_k}{\sqrt{is+|\xi'|^2}+|\xi'|}\cdot \frac{\sqrt{is+|\xi'|^2}e^{-x_n\sqrt{is+|\xi'|^2}}-|\xi'|e^{-x_n|\xi'|}}{\sqrt{is+|\xi'|^2}-|\xi'|}
\end{align*}
which is bounded uniformly in $x_n>0$. Thus, the inequality
\begin{equation}\label{gCZ}
\|g*a\|_{L^q_{x',t}}\leq C \|a\|_{L^q_{x',t}}
\end{equation}
holds for $q=2$. By the estimate \eqref{33} we can verify the conditions for \cite[page 19, Theorem 3]{Stein93} are fulfilled, which shows that \eqref{gCZ} holds for $q\leq 2$. Using duality argument we can get that \eqref{gCZ} holds for $q\geq 2$. Combining \eqref{gCZ} and \eqref{ECZ}, we arrive at \eqref{44}.
\end{proof}

\begin{lem}\label{Lq-estimate-kernel}
Suppose $\bm{a}(x',t)\in L^q_c(\R^{n-1}\times \R)$, $1<q<\infty$, and $\bm{u}(x,t)$ is constructed through the Poisson kernel by \eqref{u_expression}.
Denote $\Omega=\hbox{supp}(\bm{a})$. For any compact set $K\subset\overline{\R^n_+}\times\R$, we have 
\begin{align}\label{cor_est1}
\|\bm{u}\|_{L^q(K)}\leq C(K,\Omega)\|\bm{a}\|_{L^q(\Omega)}.
\end{align}
Moreover, $\bm{u}$ is spatially differentiable to any order when $x_n>0$.

\end{lem}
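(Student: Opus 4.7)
The plan is to decompose $P_{kj}=P_{kj}^0+\alpha P_{kj}^\alpha$ via Lemmas \ref{lem_p0} and \ref{lem_pa} and correspondingly split $\bm{u}=\bm{u}^0+\alpha\bm{u}^\alpha$. For each piece I aim to establish a uniform-in-$x_n$ bound $\|u_k(\cdot,x_n,\cdot)\|_{L^q_{x',t}(\R^{n-1}\times\R)}\le C\sum_j\|a_j\|_{L^q}$ for all $x_n>0$; integrating over the bounded $x_n$-range of $K$ then yields \eqref{cor_est1}, with the constant $C=C(K,\Omega)$ tracked through the compact supports. Since $\bm{a}$ has compact support $\Omega$, I work with $a_j\in L^q(\R^{n-1}\times\R)$ extended by zero.

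For $\bm{u}^0$, the $j=n$ contribution is handled directly by Lemma \ref{lem_Lp P0kn}. For $j<n$, formula \eqref{3.10} splits $P^0_{kj}$ into a heat-kernel piece $-2\delta_{kj}\Gamma$ and a Riesz-type piece $-2\mathcal{H}(t)\partial_k\partial_j\int_t^{+\infty}\Gamma\,\ds$. The heat-kernel piece is handled by Young's inequality, noting that $\|\Gamma(\cdot,x_n,\cdot)\|_{L^1(\R^{n-1}\times(-T,T))}$ is finite uniformly in $x_n>0$ on any bounded time window dictated by the positions of $K$ and $\Omega$. The Riesz-type piece is treated by the same template as in Lemma \ref{lem_Lp P0kn}: from \eqref{FT of u0} its Fourier symbol in $(x',t)$ is bounded uniformly in $x_n>0$ (giving $L^2$ boundedness by Plancherel), while the pointwise estimates \eqref{E P0} and \eqref{33} verify the Calder\'on--Zygmund size/regularity conditions in $(x',t)$, so Stein's theorem extends boundedness to all $1<q<\infty$ uniformly in $x_n$.

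For $\alpha\bm{u}^\alpha$, the pointwise estimates \eqref{E Pkj} and \eqref{E Pkn} of Lemma \ref{lem_estP} show that the kernels $P^\alpha_{kj}$ are much less singular than their $P^0$ counterparts. A direct substitution (e.g.\ $z'=\sqrt{x_n^2+\tau}\,w$ in the $(|z'|^2+x_n^2+\tau)^{-(n-\sigma)/2}(x_n^2+\tau)^{-\sigma/2}$ factors from \eqref{E Pkn}) shows that $\|P^\alpha_{kj}(\cdot,x_n,\cdot)\|_{L^1(B)}$ is bounded uniformly in $x_n>0$ on any bounded subset $B\subset\R^{n-1}\times\R$ containing the possible convolution shifts $\{(x'-y',t-s):(x,t)\in K,\,(y',s)\in\Omega\}$. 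Young's inequality then yields the uniform-in-$x_n$ $L^q$ bound.

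For the spatial differentiability when $x_n>0$: on any compact subset of $K\cap\{x_n\ge\ep\}$, the kernels $P^0_{kj}(x-y',x_n,t-s)$ and $P^\alpha_{kj}(x-y',x_n,t-s)$ are smooth in $x$, with derivatives of any order pointwise dominated by locally integrable functions in $(y',s)\in\Omega$ via Lemma \ref{lem_estP} and the explicit Gaussian/heat-kernel formulas in Lemmas \ref{lem_p0}, \ref{lem_pa}. Differentiation under the integral is thereby justified. The main technical obstacle I anticipate is the uniform-in-$x_n$ $L^q$ bound for the Riesz-type contributions in $P^0_{kj}$ with $j<n$, which follows the template of Lemma \ref{lem_Lp P0kn} but requires careful Fourier-analytic verification of the multiplier boundedness; the $\alpha P^\alpha$ part, by contrast, is handled elementarily via pointwise kernel bounds.
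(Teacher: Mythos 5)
Your overall plan — decompose $P_{kj}=P^0_{kj}+\alpha P^\alpha_{kj}$, handle $P^0_{kn}$ via Lemma~\ref{lem_Lp P0kn}, handle the remaining pieces by Young, and justify differentiation under the integral via the kernel bounds — matches the paper's proof in outline, and your treatment of $\alpha P^\alpha_{kj}$ and of the Gaussian part of $P^0_{kj}$ is correct. However, there is a genuine error in how you treat the Riesz-type piece $-2\mathcal{H}(t)\partial_k\partial_j\int_t^\infty\Gamma\,\ds$ of $P^0_{kj}$ for $j<n$.

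You assert that its Fourier symbol in $(x',t)$ is bounded uniformly in $x_n$, citing \eqref{FT of u0}, and use this to get $L^2$ boundedness by Plancherel before extrapolating via Calder\'on--Zygmund theory. The symbol is in fact unbounded. From the derivation in Lemma~\ref{lem_p0} it equals (up to a constant) $\frac{(i\xi_k)(i\xi_j)}{is}\bigl(\frac{e^{-x_n b}}{b}-\frac{e^{-x_n|\xi'|}}{|\xi'|}\bigr)$ with $b=\sqrt{is+|\xi'|^2}$. Taking $\xi_k=\xi_j=|\xi'|\to 0$ and $s=|\xi'|^2$, one has $b\approx|\xi'|\sqrt{1+i}$, and the symbol behaves like $\frac{1}{i}\cdot\frac{1}{|\xi'|}\bigl(\frac{1}{\sqrt{1+i}}-1\bigr)\to\infty$. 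The same blow-up appears for the full symbol of $P^0_{kj}$, $j<n$ (the extra $-\delta_{kj}\,e^{-x_n b}/b$ also diverges like $1/|\xi'|$). So the $L^2$-by-Plancherel step fails, and the CZ extrapolation in your argument has no $L^2$ anchor. (This is different from the situation in Lemma~\ref{lem_Lp P0kn}: there the extra $\partial_n^2$ produces factors of $b$ in the numerator that balance the $1/(is)$, and the symbol is genuinely bounded.)

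The fix is exactly the paper's observation, and it is simpler than what you attempted: Lemma~\ref{lem_estP} gives $|P^0_{kj}|\lesssim(|x|^2+t)^{-n/2}\le(|x'|^2+t)^{-n/2}$ for $j<n$, $t>0$, and this bound is uniform in $x_n\ge 0$ and is locally integrable in $(x',t)\in\R^{n-1}\times\R$ for every $n\ge2$. Since $\bm{a}$ is supported in the compact set $\Omega$ and one only evaluates $\bm{u}$ on the compact set $K$, only the values of the kernel on the compact difference set $\{(x',t)-(y',s):(x,t)\in K,\ (y',s)\in\Omega\}$ matter, and Young's inequality on that bounded region gives \eqref{cor_est1} directly. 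No multiplier theory is needed except for the genuinely singular $P^0_{kn}$ (which carries the $\delta(t)\partial_k E$ term), for which Lemma~\ref{lem_Lp P0kn} is the right tool.
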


\begin{proof}
Recall $u_k=P_{kj}*a_j$ with $P_{kj}=P^0_{kj}+\alpha P^\alpha_{kj}$. By Lemma \ref{lem_Lp P0kn}, we have that 
\[\|P^0_{kn}*a\|_{L^q(K)}\leq C\|a\|_{L^q(\Omega)}.\]
By Lemmas \ref{lem_p0} and \ref{lem_pa}, $P^0_{kj}$, $P^\alpha_{kj}$, $j<n$ and $P^\alpha_{kn}$ are functions on $\overline{\R^{n}_+}\times \R$ that vanish for $t\le 0$.
By Lemma \ref{lem_estP}, we have for $1\leq k\leq n$, $1\leq j\leq n-1$, $t>0$ and $\sigma=\delta_{k<n}$,
\[|P^0_{kj}|\lesssim \frac{1}{(|x'|^2+t)^{\frac{n}{2}}}, \ \ \ |P^\alpha_{kj}|\lesssim \frac{1}{(|x'|^2+t)^{\frac{n-1}{2}}},\ \ \ |P^\alpha_{kn}|\lesssim \frac{1}{(|x'|^2+t)^{\frac{n-\sigma}{2}}t^{\frac{\sigma}{2}}}.\]
Therefore, $P_{kj}^{0},\,P_{kj}^{\alpha},\,P_{kn}^{\alpha}\in L^{1}_{\loc}(\R^{n-1}\times\R)$, uniformly in $x_n \ge 0$.

By Young's convolution inequality, \eqref{cor_est1} is proved. Also by Lemma \ref{lem_estP}, $\bm{u}$ is spatially differentiable to any order when $x_n>0$, and we have
\begin{equation}\label{xnn}
	|\pd_{x}^l \bm{u}|\leq C (1+x_n^{-l-n-1}).\qedhere
\end{equation}
\end{proof}

\smallskip

\begin{proof}[Proof of Theorem \ref{thm_main}]
First suppose $\bm{a}\in C_c^{\infty}$. Similar to \eqref{xnn}, $\bm{u}$, $p$ are smooth functions for both variables $x$ and $t$. Similar to the proof of Lemma \ref{lem 5-2}, $\bm{u}$, $p$ ($\nabla p$ if $n=2$) vanish sufficient fast near infinity.
Since $\tilde{\bm{u}},\tilde{p}$ are given in \eqref{FT of u} and \eqref{FT of pressure}, it is not hard to verify they satisfy Stokes equations in Fourier space. Hence $\bm{u}$, $p$ satisfy Stokes equations \eqref{Stokes_Eq} in physical space. By Lemma \ref{lem-boundary-convergence} we know $\bm{u}$ satisfies Navier boundary condition \eqref{Navier-BC}. Hence, $\bm{u}$ is a weak solution satisfying \eqref{cor_weak form3}. Then using Lemma \ref{Lq-estimate-kernel} and a density argument, we get the general case for $\bm{a}\in L^q_c$. 
\end{proof}

\section{Blow-up of the second derivative in $L^q$}\label{Sec 5}
In this section, we will prove Theorem \ref{thm_main2}. In the following we assume $1<q<\infty$, since $L^q$ blow-up in $Q_{\frac{1}{2}}^+$ implies $L^\infty$ blow up in $Q_{\frac{1}{2}}^+$.  Using a similar idea as that in \cite{Chang2023}, we set the boundary value 
\begin{equation*}
\bm{a}(x',t)=g(x^{\prime},t)e_n, \ \ \ e_n=(0,...,0,1),
\end{equation*}
for scalar function $g(x^{\prime},t)=g^{\cS}(x')g^{\cT}(t)$, where $g^{\cS}(x')$ is a cutoff function
\begin{equation}\label{gs}
g^{\cS}(x^{\prime})\in C_c^{\infty}(B^{\prime}_1(x'_0)), \quad x'_0=(-4,0,\cdots,0),
\end{equation}
and
\begin{equation}\label{gt}
\mathrm{supp}\  g^{\cT}(t) \subset (\frac{3}{4},\frac{7}{8}),\quad g^{\cT}(t)\in L^{\infty}(\R)\setminus \dot{B}^{\frac{1}{2}-\frac{1}{2q}}_{q,q}(\R).
\end{equation}
There is an example of function $g^{\cT}$ satisfying \eqref{gt}
in \cite[Appendix C]{Chang2023}. We will take $g^{\cS}\geq 0$ for $n\geq 3$, and $g^{\cS}=\pd_1g^{\cS}_1$ with $g^{\cS}_1\geq 0$ for $n=2$. 

By Theorem \ref{thm_main}, a velocity field satisfying Stokes equation \eqref{Stokes_Eq} and Navier BC \eqref{Navier-BC} with the given boundary data $\bm{a}$ is
\begin{align}\label{exp of u}
\begin{split}
u_k(x,t)&=\int_0^t\int_{\R^{n-1}}\big(P^0_{kn}+\alpha P^{\alpha}_{kn}\big)(\xi^{\prime},x_n,s)g^{\cS}(x'-\xi')g^{\cT}(t-s)\dxi' \ds\\
&\equiv u_k^0+\alpha u_k^{\alpha},
\end{split}
\end{align}
for $1\leq k \leq n$, where $P_{kn}^{0}$ and $P_{kn}^{\alpha}$ are defined in \eqref{3.11} and \eqref{pkj2}. By \eqref{pkj2}, we set
\begin{equation}
u_k^\alpha=-4 \delta_{k<n}u_{k,1}^{\alpha} -2 u_{k,2}^{\alpha}+2 u_{k,3}^{\alpha},
\end{equation}
where for $1\leq \ell \leq3$,
\begin{equation}\label{ukl}
u_{k,\ell}^{\alpha}=\int_0^t\int_{\R^{n-1}}\partial_k\partial_n J_\ell(\xi^{\prime},x_n,s)a_n(x^{\prime}-\xi',t-s)\dxi' \ds.
\end{equation}
We give some identities which will be often used in this section. By the fact
\begin{equation}\label{EGamma}
\partial_n^2E=-\Delta_{x'}E, \quad \partial_n^2\Gamma=-\Delta_{x'}\Gamma+\partial_t\Gamma,
\end{equation}
and \eqref{J2 derivative} for $\partial_n^2 J_2$, we obtain that for $J_1,J_2, J_3$ defined in \eqref{def of J},
\begin{align}\label{differential transfer}
\partial_n^2J_1=-\Delta_{x'}J_1+\partial_tJ_1, \ \ \ 
\partial_n^2 J_2=-\Delta_{x'}J_2-2\Delta_{x'}J_1, \ \ \ 
\partial_n^2 J_3=-\Delta_{x'}J_3.
\end{align}
The first identity in \eqref{differential transfer} will not be used.

\begin{lem}\label{lem 5-1}
For $\bm{u}$ defined in \eqref{exp of u}, we have $|\bm{u}(x,t)|+|\nabla \bm{u}(x,t)|\lesssim 1 $, when $|x|\leq 10$ and $0<t\leq 2$.
\end{lem}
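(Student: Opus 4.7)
The strategy is to exploit the smoothness of $g = g^{\cS}(x')g^{\cT}(t)$ in $x'$ (with $g^{\cS}\in C_c^\infty$) and the boundedness in $t$, transferring tangential derivatives onto $g^{\cS}$ by integration by parts so that only mild kernels interact with the merely $L^\infty$-in-$t$ factor $g^{\cT}$. Write $u_k = u_k^0 + \alpha u_k^\alpha$ via \eqref{exp of u}. Since the convolution in $x'$ commutes with $\pd_{x'_\ell}$, we have $\pd_{x'_\ell} u_k = \int P_{kn}(\xi',x_n,s)\,\pd_\ell g^{\cS}(x'-\xi')\,g^{\cT}(t-s)\dxi'\ds$, where $\pd_\ell g^{\cS}\in C_c^\infty$ is again bounded and compactly supported. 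Thus bounding $\pd_{x'_\ell} u_k$ reduces to bounding $u_k$ with new boundary data, and it suffices to show $|u_k|\lesssim 1$ and $|\pd_{x_n} u_k|\lesssim 1$.

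For $u_k^0$, use the splitting \eqref{3.11} of $P^0_{kn}$. The Dirac-in-time piece contributes $\pd_k E *_{x'} g(\cdot, t)$: for $k = n$ this is (up to a constant) the harmonic Poisson extension of $g(\cdot, t)$, bounded by the maximum principle; for $k < n$ integration by parts in $\xi'_k$ reduces it to $E *_{x'} \pd_k g$, bounded since $E$ is locally $L^1$ and $\pd_k g$ is smooth and compactly supported. The heat piece $\delta_{kn}\pd_n\Gamma$ satisfies $\int_0^\infty\!\int_{\R^{n-1}} |\pd_n\Gamma(\xi',x_n,s)|\dxi'\ds\lesssim 1$ uniformly in $x_n\ge 0$ (substitute $s = x_n^2\sigma$), giving a bound by $\|g\|_\infty$. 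For the delicate piece $\mathcal{H}(t)\pd_k\pd_n^2\int_t^\infty \Gamma$, the parabolic identity $\pd_n^2\Gamma = \pd_t\Gamma - \Delta_{x'}\Gamma$ yields
\begin{equation*}
\pd_n^2 \int_s^{+\infty}\Gamma(\cdot,\tau)\dtau = \Gamma(\cdot,s) - \Delta_{x'}\int_s^{+\infty}\Gamma(\cdot,\tau)\dtau;
\end{equation*}
the first term reduces to the heat piece (possibly after an IBP onto $g^{\cS}$), and in the second term we transfer $\Delta_{x'}$ to $g^{\cS}$ by IBP in $\xi'$, leaving a kernel $\pd_k\int_s^\infty\Gamma\dtau$ whose convolution against the compactly supported $\Delta_{x'}g^{\cS}g^{\cT}$ is bounded by the estimate \eqref{33} together with Fubini in $(s,\tau)$.

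For $u_k^\alpha$, use \eqref{pkj2}--\eqref{def of J}: each $J_\ell$ carries a factor $e^{-\alpha z}$ in the auxiliary variable plus heat kernel decay, so by \eqref{J_1}--\eqref{J_3} and the compact support of $g$ in $x'$, the kernels $\pd_k\pd_n J_\ell$ are uniformly integrable in $(\xi',s)$ up to $x_n = 0$. For $\pd_{x_n} u_k$ we repeat the whole analysis with one more $x_n$-derivative on the kernel; every time an excess $\pd_{x_n}^2$ would fall on $\Gamma$ or $E$ we invoke $\pd_{x_n}^2\Gamma = \pd_t\Gamma - \Delta_{x'}\Gamma$ and $\pd_{x_n}^2 E = -\Delta_{x'} E$ (respectively \eqref{differential transfer} for the $u_k^\alpha$ pieces) to trade it for tangential derivatives absorbed into $g^{\cS}$ and a time derivative on $\Gamma$ controlled via Fubini in $(s,\tau)$.

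The main obstacle is that the direct pointwise $L^1_{\xi',s}$ norms of the second-order kernels $\pd_k\pd_n^2\int_t^\infty \Gamma$ and $\pd_n\pd_k E$ diverge as $x_n\to 0^+$, so kernel bounds alone do not yield the claim. The remedy, as above, is the systematic combination of integration by parts in $\xi'$ (using $g^{\cS}\in C_c^\infty$) and the parabolic identity $\pd_n^2 = \pd_t - \Delta_{x'}$ to trade every problematic normal derivative for tangential derivatives absorbed into the smooth data, yielding uniform bounds up to the boundary $x_n=0$.
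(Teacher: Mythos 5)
Your overall strategy — transfer tangential derivatives onto the smooth $g^{\cS}$ by integration by parts and handle excess normal derivatives via the parabolic and harmonic identities — matches the paper for $\bm{u}^0$, for $\nabla_{x'}\bm{u}^\alpha$, and for $\pd_n u_{k,\ell}^\alpha$ with $\ell=2,3$. However, there is a genuine gap in your treatment of $\pd_n u_{k,1}^\alpha$, which requires controlling $\pd_n^2 J_1$. You invoke \eqref{differential transfer}, but its first identity reads $\pd_n^2 J_1=-\Delta_{x'}J_1+\pd_t J_1$, and the paper explicitly remarks that this identity \emph{is not used} — for good reason. The time derivative $\pd_t J_1$ cannot be moved onto $g^{\cT}$, which is merely $L^\infty$ and has no time derivative; and your appeal to ``Fubini in $(s,\tau)$'' does not apply here because, unlike $P^0_{kn}$ (which contains the nested time integral $\int_t^\infty\Gamma\,d\tau$), the kernel $J_1$ has no inner time integration to interchange with the convolution. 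Moreover, by Lemma \ref{lemAA} with $m=1$, $\pd_t J_1$ has exactly the same local $L^1_{\xi',s}$ singularity as $x_n\to0^+$ (of order $t^{-1}$ after the $z$-integration) as $\pd_n^2J_1$ itself, so the parabolic identity gains nothing for $J_1$.

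The missing ingredient is the paper's decomposition \eqref{J_1 decompose}, which integrates by parts in the \emph{auxiliary variable} $z$ rather than invoking the heat equation: since $\pd_n\Gamma(y',x_n+z,t)=\pd_z\Gamma(y',x_n+z,t)$, one obtains $\pd_n J_1=-K_1+\alpha J_1-K_2$, where $K_1$ is a boundary term at $z=0$ and $K_2$ carries the derivative on $E$ instead of on $\Gamma$. Each of $\pd_n K_1$, $\pd_n J_1$, $\pd_n K_2$ is then locally $L^1$ uniformly up to $x_n=0$ by \eqref{K_1}, \eqref{J_1}, \eqref{K_2}. There is no substitute for this integration-by-parts-in-$z$ step in your sketch, and without it the bound on $\pd_n\bm{u}^\alpha$ fails. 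A secondary omission: the case $n=2$ needs separate care since Lemma \ref{lemAA} requires $l+j+n\ge3$; the paper handles this via the subtraction trick \eqref{6.7}, which your proposal does not address.
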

\begin{proof}
We first consider the case $n\geq 3$. 

First, we estimate $u_{k}^0$, $1\leq k \leq n$. Notice that by  \eqref{EGamma},
\begin{equation}\label{6.5}
\pd_n^2\int_t^{\infty}\Gamma(x,s)\ds=-\Gamma(x,t)-\Delta_{x'}\int_t^{\infty}\Gamma(x,s)\ds,
\end{equation} 
and by \eqref{33},
\begin{align}\label{61-1}
\int_0^2\int_{B'_{20}}\left( |\Gamma|+\left|\pd_x\int_t^{\infty}\Gamma(x,s)\ds\right|\right)\dx'\dt\lesssim 1.
\end{align}
By \eqref{555}, \eqref{6.5}--\eqref{61-1} and integration by parts of the horizontal derivatives $\partial_{x'}^{m}$, $m\geq 1$, it is easy to verify
\begin{equation}\label{5.12}
|\bm{u}^0|+|\nabla \bm{u}^0|\lesssim 1.
\end{equation}

Next we estimate $u^\alpha_k$, $1\leq k \leq n$. Using \eqref{differential transfer} and integration by parts of the horizontal derivatives $\partial_{x'}^{m}$, $m\geq 1$, we get 
\begin{equation}
	\label{eq511jj}
|\bm{u}^\alpha|+|\partial_{x'}\bm{u}^{\alpha}|\lec \int_0^2\int_{B'_{20}}\left( |J_1|+|\pd_n J_1|+|J_2|+|\pd_n J_2|+|J_3|+|\pd_n J_3|\right)\dxi'\ds.
\end{equation}
By estimates \eqref{J_1}, \eqref{J_2} and \eqref{J_3}, the RHS of the above inequality is bounded, i.e.,
\begin{equation}\label{5.13}
|\bm{u}^\alpha|+|\partial_{x'}\bm{u}^{\alpha}|\lec 1.
\end{equation}
Analogously for $1\leq k \leq n$, we have
\begin{equation}\label{5.10}
|\pd_n u^{\alpha}_{k,2}|+|\pd_nu^{\alpha}_{k,3}|\lesssim 1,
\end{equation}
where $u_{k,\ell}^{\alpha}$ is defined in \eqref{ukl}. Now we look at normal derivative of $ u_{k,1}^{\alpha}$, $1\leq k\leq n-1$,
\begin{equation*}
\pd_nu^{\alpha}_{k,1}=-\int_0^t\int_{\R^{n-1}}\partial_n^2 J_1(\xi^{\prime},x_n,s)\partial_k a_n(x^{\prime}-\xi',t-s)\dxi' \ds.
\end{equation*}
We first integrate by parts in variable $x_n$ to get
\begin{align}\label{J_1 decompose}
\partial_{n}J_1 =\int_{0}^{+\infty}e^{-\alpha z}\int_{\R^{n-1}}E(x^{\prime}-y^{\prime},z)\pd_z\Gamma(y^{\prime},x_{n}+z,t)\dy^{\prime}\dz= -K_1+\alpha J_1-K_2, 
\end{align}
where 
\[K_1=\int_{\R^{n-1}}E(y',0)\Gamma (x'-y',x_n,t)\dy',\]
\[K_2=\int_{0}^{+\infty}e^{-\alpha z}\int_{\R^{n-1}}\pd_n E(x^{\prime}-y^{\prime},z)\Gamma(y^{\prime},x_{n}+z,t)\dy^{\prime}\dz.\]
We can obtain that by \eqref{J_1},
\begin{equation}
|\partial_{n} J_1|\lec \frac{1}{|x|^{n-2}\cdot t^{\frac{1}{2}}},
\end{equation}
by Lemma \ref{lemAA}, 
\begin{equation}\label{K_1}
| \partial_{n} \partial_{x'}^m K_1|=\frac{1}{\sqrt{4\pi t}}|\partial_{x'}^m A(x',0,0,t)|\cdot|\partial_{n}e^{-\frac{x_n^2}{4t}}|\lesssim \frac{x_n}{|x|^{n+m-2}\cdot t^{\frac{3}{2}}}e^{-\frac{x_n^2}{4t}},\quad m\geq 0,
\end{equation}
and by Young's convolution inequality, \eqref{555} and \eqref{ze}
\begin{align}
\nonumber
\|\pd_{n} K_2\|_{L^1_{x'}(\R^{n-1})}&\lec \int_{0}^{+\infty}e^{-\alpha z}\|\pd_n E(x^{\prime},z)\|_{L^1_{x'}(\R^{n-1})}\|\pd_n\Gamma(y^{\prime},x_{n}+z,t)\|_{L^1_{y'}(\R^{n-1})}\dz\\
&\lec \int_{0}^{+\infty}1\cdot1\cdot t^{-1} e^{-\frac{z^2}{8t}}\dz\lec t^{-\frac{1}{2}}.
\label{K_2}
\end{align}
Therefore, we have
\begin{equation}\label{518kk}
\int_0^2\int_{B'_{20}}|\pd_nK_1|+|\pd_nJ_1|+|\pd_{n} K_2| \dxi'\ds\lec 1.
\end{equation}
Hence, we get that for $1\leq k \leq n-1$,
\begin{equation}\label{5.14}
|\pd_n u_{k,1}^{\alpha}(x,t)|\lesssim 1.
\end{equation}
By \eqref{5.10}  and \eqref{5.14}, we have
\begin{equation}\label{5.21}
|\pd_n\bm{u}^{\alpha}|\lesssim1.
\end{equation} 
Combining \eqref{5.12}, \eqref{5.13}, and \eqref{5.21}, we have proved Lemma \ref{lem 5-1} for $n \ge 3$.

For $n=2$, extra care is needed because we need $l\geq 1$ in Lemma \ref{lemAA}. We can use the following estimate
\begin{align}\label{6.7}
\begin{split}
& \ \ \ |\int_{B'_{20}}\partial_{1}J_1(\xi',x_n)g^{\cS}(x'-\xi')\dxi'|=|\int_{B'_{20}}\partial_{1}J(\xi',x_n)(g^{\cS}(x'-\xi')-g^{\cS}(x'))\dxi'|\\
&\lesssim\int_{B'_{20}}|\partial_{1}J(\xi',x_n)|\cdot |\xi'|\cdot\|Dg^{\cS}\|_{L^{\infty}}\dxi' \lesssim \int_{B'_{20}}\frac{1}{|\xi'|^{n-2}}\dxi'\lesssim1,
\end{split}
\end{align}
on the estimate of $J_{k}$ with $1\leq k\leq 3$ and $K_1$, and 
replacing \eqref{eq511jj} by
\[
|\bm{u}^\alpha|+|\partial_{x'}\bm{u}^{\alpha}|\lec \int_0^2\int_{B'_{20}}\sum_{k=1}^{3}\left( |\pd_1 J_k|+|\pd_1\pd_n J_k|\right)\dxi'\ds,
\]
and similarly for \eqref{518kk}.
The other parts are the same as $n\geq 3$.
\end{proof}
\begin{lem}\label{lem 5-2}
For $\bm{u}$ defined in \eqref{exp of u}, we have that for $|x|> 10$ and $0<t\leq 2$,
\begin{equation}\label{666}
|\bm{u}(x,t)|+|\nabla \bm{u}(x,t)|\lesssim \frac{1}{|x|^{n-1}}.
\end{equation}
\end{lem}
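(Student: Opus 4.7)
The proof parallels Lemma \ref{lem 5-1} but tracks $|x|$-decay by exploiting the compact support of $\bm{a}$. Since $g^{\cS}$ is supported in $B'_1(x'_0)$ with $x'_0=(-4,0,\ldots,0)$, the factor $g^{\cS}(x'-\xi')$ in \eqref{exp of u} forces $\xi'\in B'_1(x'-x'_0)$, so $|\xi'|\ge|x'|-5$. Combined with $|x|^2=|x'|^2+x_n^2$ and a short case split on whether $|x'|$ is large or $x_n$ is large, this yields, for all $|x|>10$,
\begin{equation}\label{plan.supp}
|\xi'|^2+x_n^2\gtrsim|x|^2\quad\text{uniformly for }\xi'\in B'_1(x'-x'_0).
\end{equation}

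Given \eqref{plan.supp}, the bounds on $|\bm{u}|$ and $|\pd_{x'}\bm{u}|$ follow directly from Lemma \ref{lem_estP}. The slowest-decaying contribution is $P^\alpha_{kn}$ with $k<n$, for which $|P^\alpha_{kn}|\lec(|x|^2+s)^{-(n-1)/2}(x_n^2+s)^{-1/2}$; using $\int_0^t(x_n^2+s)^{-1/2}\,ds\lec\sqrt{t}\lec 1$ and $g^{\cS},g^{\cT}\in L^{\infty}$ gives $|u_k|\lec|x|^{-(n-1)}$. The remaining pieces $u^0_k$ and $u^\alpha_n$ decay faster. For $|\pd_{x'}\bm{u}|$, I would let the horizontal derivative act on the kernel, picking up an extra $(|x|^2+s)^{-1/2}$ factor from \eqref{E Pkn}, so $|\pd_{x'}\bm{u}|\lec|x|^{-n}$.

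The delicate case is $\pd_n u^\alpha_k$ for $k<n$: the direct estimate $|\pd_n P^\alpha_{kn}|\lec(|x|^2+s)^{-(n-1)/2}(x_n^2+s)^{-1}$ produces $\int_0^t(x_n^2+s)^{-1}\,ds=\log(1+t/x_n^2)$, which is not uniformly bounded as $x_n\to 0^+$. I would replicate the decomposition from Lemma \ref{lem 5-1}: use \eqref{pkj2} to write $P^\alpha_{kn}$ in terms of $J_1, J_2, J_3$, apply \eqref{differential transfer} to recast $\pd_n^2 J_2, \pd_n^2 J_3$ as tangential derivatives, and use \eqref{J_1 decompose} to split $\pd_n J_1$ into $-K_1+\alpha J_1-K_2$. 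The crucial modification for Lemma \ref{lem 5-2} is to \emph{keep} the outer tangential derivative $\pd_k$ on the kernel rather than transferring it onto $g^{\cS}$; this produces $\pd_k\pd_n K_1$, $\pd_k\pd_n J_1$, $\pd_k\pd_n K_2$, $\pd_k\Delta_{x'}J_\ell$, each carrying one additional $(|\xi'|^2+s)^{-1/2}\lec|x|^{-1}$ factor compared with the expressions appearing in Lemma \ref{lem 5-1}. The uniform-in-$x_n$ time integrations (e.g.\ $\int_0^t\frac{x_n}{s^{3/2}}e^{-x_n^2/(4s)}\,ds\lec 1$) made possible by the Gaussian factors of $K_1$ and $J_1$ stay intact, and combining these ingredients delivers $|\pd_n u^\alpha_k|\lec|x|^{-(n-1)}$.

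For $n=2$, I would write $g^{\cS}=\pd_1 g^{\cS}_1$ and integrate by parts in $\xi_1$ once to activate Lemma \ref{lemAA}'s requirement $l+j+n\ge 3$, mirroring \eqref{6.7}. The principal obstacle is the $\pd_n u^\alpha_k$ analysis for $k<n$: one must simultaneously secure the $|x|^{-(n-1)}$ spatial decay and sidestep the logarithmic blow-up as $x_n\to 0^+$. Keeping the extra tangential derivative on the kernel (which supplies an $|x|^{-1}$ gain via \eqref{plan.supp}) and exploiting the $e^{-x_n^2/(cs)}$ factors inherent in $K_1$ and $J_1$ (which tame the $x_n\to 0$ singularity) is what reconciles these two competing demands.
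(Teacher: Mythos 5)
Your proposal is correct and mirrors the paper's own proof: the compact support of $g^{\cS}$ pushes the kernel's argument away from the origin so that the pointwise bounds of Lemma~\ref{lem_estP} and Lemma~\ref{lemAA} supply the $|x|^{-(n-1)}$ decay, while the delicate term $\pd_n u^\alpha_k$ ($k<n$) is controlled via the decomposition $\pd_n J_1 = -K_1+\alpha J_1-K_2$ from \eqref{J_1 decompose} together with the Gaussian factors of $K_1$ and $J_1$ that tame the $x_n\to 0$ singularity. The two minor detours you propose---applying \eqref{differential transfer} to $\pd_n^2 J_2$, $\pd_n^2 J_3$ rather than quoting \eqref{J_2}, \eqref{J_3} directly, and a separate $n=2$ integration by parts---are not in the paper's proof of this lemma and are in fact unnecessary here (since a tangential derivative always sits on the kernel, the constraint $l+j+n\ge 3$ of Lemma~\ref{lemAA} is automatic), but they do no harm.
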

\begin{proof}
By \eqref{E P0}, \eqref{E Pkn}, $\frac{1}{|x-\xi'|}\lesssim \frac{1}{|x|}$ when $\xi'\in B_5'(x'_0)$, and $\int_0^2 t^{-1/2}\dt \lec 1$, we have
\begin{equation}\label{5.111}
|\bm{u}|+|\nabla \bm{u}^0(x,t)|+|\partial_{x'}\bm{u}^{\alpha}|+|\partial_{n} u_n^{\alpha}|\lec \frac{1}{|x|^{n-1}}.
\end{equation} 
Now we come to the estimate of $\pd_n u_k^\alpha$ with $1\leq k\leq n-1$. By \eqref{J_2} and \eqref{J_3}, we have
\begin{equation}
|\pd_{n} u_{k,2}^{\alpha}|+ |\pd_{n} u_{k,3}^{\alpha}| \lec \frac{1}{|x|^{n-1}}.
\end{equation}
By Lemma \ref{lemAA}, we have
\begin{equation}\label{K_20}
|\pd_k\pd_nK_2|\lec \frac{1}{|x|^{n}\cdot t^{\frac{1}{2}}}.
\end{equation}
By the decomposition of $\pd_n J_1$ in \eqref{J_1 decompose} and \eqref{J_1}, \eqref{K_1}, \eqref{K_20}, we have that for $1\leq k \leq n-1$
\begin{equation}\label{5.11}
|\pd_n u_{k,1}^{\alpha}|\lesssim \frac{1}{|x|^{n-1}}.
\end{equation}
Thus, we have proved this lemma.
\end{proof}
Thus, by Lemma \ref{lem 5-1} and \ref{lem 5-2}, we get the finite global energy \eqref{global energy} of $\bm{u}$ for $n\geq 3$. For $n=2$, by moving the derivative $\pd_1$ in $g^{\cS}(x')=\pd_1g^{\cS}_1$ to  $P_{kn}$ in \eqref{exp of u}, we are able to get a stronger decay estimate, for $|x|>10$ and $0<t\leq 2$,
\begin{equation}\label{eq5.26}
|\bm{u}(x,t)|+|\nabla \bm{u}(x,t)|\lesssim \frac{1}{|x|^{n}}.
\end{equation}
Thus, it has finite energy \eqref{global energy}. We leave the details to the interested readers. Now, we show the blow-up of second derivative of the solution in $L^{q}(Q_{\frac{1}{2}}^+)$, $1<q<\infty$.
\begin{lem}\label{lem 5-3}
	For $\bm{u}$ defined in \eqref{exp of u} and $1<q<\infty$, we have 
	\begin{equation}
		\|\pd^2_nu_1\|_{L^{q}(Q_{\frac{1}{2}}^+)}= \infty.
	\end{equation}
\end{lem}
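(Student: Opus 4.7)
The plan is to isolate a singular ``main term'' $M(x,t) = 4\alpha\,\phi(x')\,\partial_n^2 F(x_n,t)$ in the decomposition of $\partial_n^2 u_1$, where $\phi$ is a non-vanishing horizontal function and $F$ is a $1$D heat convolution of $g^{\cT}$, and then apply Lemma~\ref{Besov est} together with the choice $g^{\cT}\notin \dot{B}^{1/2 - 1/(2q)}_{q,q}$ to force the blow-up, after checking that the remainder lies in $L^q(Q_{1/2}^+)$. The dominant contribution will come from $-4\alpha u_{1,1}^\alpha$, consistent with the remark after Lemma~\ref{lem_estP} that $P_{1n}^\alpha$ carries the strongest normal singularity among the kernels.

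Using \eqref{J_1 decompose} and the tensor factorization
\[
K_1(x,t) = G(x',t)\,\Gamma_1(x_n,t), \quad G(x',t) = \int_{\R^{n-1}} E(y',0)\,\Gamma_{n-1}(x'-y',t)\,\dy',
\]
we have $\partial_n^2 K_1 = G(x',t)\,\partial_n^2\Gamma_1(x_n,t)$. Writing $\partial_n^3 J_1 = -\partial_n^2 K_1 + \alpha\partial_n^2 J_1 - \partial_n^2 K_2$, and splitting $\partial_1 G(\xi',s) = \partial_1 E(\xi',0) + [\partial_1 G(\xi',s) - \partial_1 E(\xi',0)]$ (using $G(\xi',0)=E(\xi',0)$) in the $-\partial_n^2 K_1$ contribution to $\partial_n^2 u_{1,1}^\alpha$, we extract
\[
M(x,t) = 4\alpha\,\phi(x')\,\partial_n^2 F(x_n,t), \quad \phi(x') = -(\partial_1 E(\cdot,0) \ast g^{\cS})(x'), \quad F(x_n,t) = \int_{\R}\Gamma_1(x_n,t-s)g^{\cT}(s)\ds.
\]
For $n\ge 3$, $\partial_1 E(\xi',0) = -C_n\,\xi_1/|\xi'|^n$ with $C_n>0$; the choice of $x'_0$ in \eqref{gs} forces $\xi_1\ge 5/2$ and $|\xi'|\approx 4$ on the effective integration region whenever $x'\in B'_{1/2}$, so $g^{\cS}\ge 0$ yields $\phi(x')\le -c_0<0$. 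The case $n=2$ is analogous via $g^{\cS}=\partial_1 g^{\cS}_1$. Lemma~\ref{Besov est} together with $g^{\cT}\notin \dot{B}^{1/2-1/(2q)}_{q,q}$ then gives $\|\partial_n^2 F\|_{L^q(\R_+\times\R)}=\infty$, and a short estimate localizing the blow-up (using $F\equiv 0$ for $t\le 3/4$ and the fast decay of $|\partial_n^2\Gamma_1|$ for $x_n\ge 1/2$ or $t>1$) yields $\|M\|_{L^q(Q_{1/2}^+)}=\infty$.

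It then remains to show that the remainder $R:=\partial_n^2 u_1 - M$ lies in $L^q(Q_{1/2}^+)$; the triangle inequality will finish the proof. The remainder splits into: $(\mathrm{i})$ $\partial_n^2 u_1^0$, bounded in $L^\infty$ using \eqref{E P0} and the separation $|\xi'|\gtrsim 1$ between $\mathrm{supp}(g^{\cS}(x'-\cdot))$ and the origin; $(\mathrm{ii})$ the $u_{1,2}^\alpha$ and $u_{1,3}^\alpha$ contributions, handled by moving $\Delta_{x'}$ onto $g^{\cS}$ via \eqref{differential transfer} and applying Lemma~\ref{lem_estP}; $(\mathrm{iii})$ the smoothed correction $[\partial_1 G(\xi',s) - \partial_1 E(\xi',0)]\,\partial_n^2\Gamma_1(x_n,s)$ convolved with $g^{\cS}g^{\cT}$, where $G(\xi',s)-E(\xi',0)=O(s)$ (uniformly for $\xi'$ bounded away from $0$) absorbs the $s^{-3/2}$ singularity of $\partial_n^2\Gamma_1(x_n,s)$; and $(\mathrm{iv})$ the $\alpha\partial_n^2 J_1$ and $-\partial_n^2 K_2$ pieces, where \eqref{differential transfer} and integration by parts in $\xi'$ and $z$ (the boundary term vanishing since $\partial_n E(\cdot,0)=0$ by parity) reduce matters to kernels with an at-worst logarithmic $L^q_{x_n}$-integrable factor. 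The main obstacle will be the detailed bookkeeping in $(\mathrm{iii})$ and $(\mathrm{iv})$; the common simplification throughout is that $\mathrm{supp}(g^{\cS})$ is separated from $B'_{1/2}$, which removes all horizontal singularities and reduces the estimates to one-dimensional heat-kernel manipulations in $(x_n,s)$.
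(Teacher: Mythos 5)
Your proposal is correct and follows the same overall strategy as the paper: decompose $\partial_n^2 u_1$ via \eqref{J_1 decompose}, isolate a tensor-product main term $\psi(x')\theta(x_n,t)$ with $\theta$ a $1$D heat convolution of $g^{\cT}$, apply Lemma~\ref{Besov est} to blow up $\theta$ in $L^q$, and check the rest is controlled. The one genuine variation is how you isolate the main term from $\partial_n^2 K_1$: you use the product structure $K_1(x,t)=G(x',t)\,\Gamma_1(x_n,t)$ and the mollification estimate $\partial_1 G(\xi',s)-\partial_1 E(\xi',0)=O(s)$ (uniformly for $|\xi'|\gtrsim 1$, since the symmetric first-order Taylor term cancels), whereas the paper invokes Lemma~\ref{lem210}, a p.v.\ integral estimate inherited from Chang--Kang, which gives an $O(\sqrt{s})$ remainder. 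Your mollification route is a bit more direct and gives a stronger bound on the error, and it exploits the same geometric fact that the paper does throughout -- namely that $\hbox{supp}(g^{\cS})$ is separated from $B'_{1/2}$, which removes all horizontal singularities.

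Two small inaccuracies to correct, neither of which affects the conclusion. First, the sign of $\phi$: for $n\ge 3$ you have $\phi(x')=-(\partial_1 E(\cdot,0)*g^{\cS})(x')=C_n\int (x_1-\xi_1)|x'-\xi'|^{-n}g^{\cS}(\xi')\dxi'>0$ on $B'_{1/2}$ (since $x_1-\xi_1\in(5/2,11/2)$ and $g^{\cS}\ge 0$), so the correct statement is $\phi(x')\ge c_0>0$, not $\phi\le -c_0<0$; only $|\phi|\gtrsim 1$ is needed, so this is harmless. Second, in step~(iv), when integrating by parts in $z$ for the $\partial_n^2 K_2$ piece, the boundary term at $z=0$ does \emph{not} vanish: $\partial_n E(\cdot,z)$ converges to $-\tfrac12\delta$ as $z\to 0^+$ (not to $0$), so the integration by parts produces the term $\tfrac12\Gamma(x,t)$ (as in the paper's identity $\partial_n K_2=\tfrac12\Gamma+\alpha K_2-K_3$). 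This boundary term is nonetheless bounded on $Q_1^+$ after convolution with $a_n$ because $|x'-\xi'|\ge 3$ keeps $\Gamma(x'-\xi',x_n,t)$ away from its $t\to 0$ singularity, so your remainder estimate still goes through once the reasoning is repaired.
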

\begin{proof}
	For $(x,t)\in Q^+_1$, we can obtain that by \eqref{E P0}, 
	\begin{equation}\label{5.28}
		|\pd_n^2 \bm{u}^{0}|\lec1,
	\end{equation}
	and by estimate \eqref{J_2} and \eqref{J_3}, both with $i=3$, 
	\begin{equation}
		|\pd_n^2 u_{1,2}^{\alpha}|+|\pd_n^2 u_{1,3}^{\alpha}|\lec1.
	\end{equation}
	Next we work on $\pd^2_nu^{\alpha}_{1,1}$
	\[\pd^2_nu^{\alpha}_{1,1}=\int_0^t\int_{B'_1(x_0')}\partial_1\partial_n^3 J_1(x^{\prime}-\xi^{\prime},x_n,s)a_n(\xi',t-s)\dxi' \ds.\]
	We still use the decomposition \eqref{J_1 decompose} to get 
	\[\pd_n^3J_1=-\pd_n^2K_1+\alpha \pd_n^2J_1-\pd_n^2K_2.\]
	For $\pd_n^2J_1$, 
	by \eqref{5.14}, we know that for $(x,t)\in Q_1^+$,
	\begin{equation}\label{5.16}
		\Big|\int_0^t\int_{B'_1(x_0')}\pd_1\pd^2_nJ_1(x'-\xi',x_n,s)a_n(\xi',t-s)\dxi'\ds\Big|=|\pd_nu^{\alpha}_{1,1}|\lesssim1.
	\end{equation}
	Moreover, for $\pd_n^2K_2$, using the fact that $-2\pd_nE(x',x_n)\to \delta(x')$ as $x_n\to 0^+$, we have
	\begin{equation*}
		\lim_{z\rightarrow0^+}\int_{\R^{n-1}}\pd_n E(x^{\prime}-y^{\prime},z)\Gamma(y^{\prime},x_{n}+z,t)\dy^{\prime}=-\frac{1}{2}\Gamma(x,t).
	\end{equation*}
	Thus, by integration by parts of variable $z$, we have
	\begin{equation*}
		\pd_{n} K_2=\frac{1}{2}\Gamma(x,t)+\alpha K_2-K_3,
	\end{equation*}
	where
	\begin{equation*}
		K_3=\int_{0}^{+\infty}e^{-\alpha z}\int_{\R^{n-1}}\pd_n^2 E(x^{\prime}-y^{\prime},z)\Gamma(y^{\prime},x_{n}+z,t)\dy^{\prime}\dz.
	\end{equation*}
	By Lemma \ref{lemAA} and \eqref{ze}, we have
	\begin{equation}\label{K_3}
		|\pd_{1}\pd_n K_3|\lec \frac{1}{|x|^{n+1}\cdot t^{\frac{1}{2}}}.
	\end{equation}
	Therefore, by \eqref{gam1}, \eqref{K_20}, \eqref{K_3}, and $|x'-\xi'| \ge 3$ for $\xi'\in \operatorname{supp} g^{\cS}$, we are able to obtain
	\begin{equation}\label{5.17}
		\Big|\int_0^t\int_{B'_1(x_0')}\pd_1\pd^2_nK_2(x'-\xi',x_n,s)a_n(\xi',t-s)\dxi'\ds\Big|\lesssim 1.
	\end{equation}
	
	Next, we denote the leading blow-up term from $\pd_n^2K_1$:
	\begin{equation}\label{uB}
		H(x,t)=\int_0^t\int_{B'_1(x_0')}\pd_1\pd_n^2 K_1(x^{\prime}-\xi^{\prime},x_n,s)a_n(\xi',t-s)\dxi' \ds,
	\end{equation}
	where 
	\[\pd_1\pd^2_nK_1(x,t)=\int_{\R^{n-1}}E(y',0)\pd_1\pd_n^2\Gamma (x'-y',x_n,t)\dy'
		=\mathrm{p.v.}\!
	\int_{\R^{n-1}}\pd_1E(y',0)\pd_n^2\Gamma (x'-y',x_n,t)\dy',
	\]
	using $\lim_{\ep \to 0} \int _{y'\in\R^{n-1},|y'|=\ep} E(y',0)\pd_n^2\Gamma (x'-y',x_n,t)e_1 \cdot \frac {y'}{\ep}\,dS_{y'}=0$.
		By Lemma \ref{lem210}, we can decompose $H$ into 
	\[H=H_1+H_2,\]
	where
	\[H_1=\int_0^t\int_{B'_1(x_0')} \pd_n^2\Gamma_1(x_n,s)\pd_1E(x'-\xi',0)a_n(\xi',t-s)\dxi' \ds,\]
	$\Gamma_1$ is the 1D heat kernel,
	and
	\[H_2=-\frac{1}{n(4\pi)^{\frac{n}{2}}|B_1|}\int_0^t\int_{B'_1(x_0')} \partial_n^2\big(e^{-\frac{x_n^2}{4s}}\big)Err(x'-\xi',s)\frac{1}{|x'-\xi'|^n}a_n(\xi',t-s)\dxi' \ds.\]
	For $(x,t)\in Q_1^+ $,
	\begin{align*}
		|H_2|&\lesssim \Big|\int_0^t\partial_n^2\big(e^{-\frac{x_n^2}{4s}}\big)\ds\Big|\lesssim\int_0^t \frac{1}{s}e^{-\frac{x_n^2}{8s}}\ds\lesssim \ln(2+\frac{1}{x_n^2}).
	\end{align*}
	Hence, we have that for $1<q<\infty$,
	\begin{equation}\label{u_b2}
		\|H_2\|_{L^{q}(Q_{1}^+)}\lesssim1.
	\end{equation}

	Next, we factor $H_1$ as 
	\begin{equation}\label{ub1}
		H_1=\psi(x') \theta(x_n,t),
	\end{equation}
	where
	\begin{equation}\label{key blow-up term}
		\theta(x_n,t)=\int_0^t \pd_n^2\Gamma_1(x_n,s)g^{\cT}(t-s)\ds,
	\end{equation}
	for $n\geq 3$, 
	\begin{equation*}
		\psi(x')=\int_{\R^{n-1}}\pd_1E(x'-\xi',0)g^{\cS}(\xi')\dxi'=-\frac{1}{n|B_1|}\int_{\R^{n-1}}\frac{x_1-\xi_1}{|x'-\xi'|^n}\cdot g^{\cS}(\xi')\dxi',
	\end{equation*}
	and for $n=2$,
	\[\psi(x')=-\frac{1}{n|B_1|}\int_{-5}^{-3}\frac{x_1-\xi_1}{|x_1-\xi_1|^2}\cdot \pd_1g^{\cS}_1(\xi')\dxi'=\frac{1}{n|B_1|}\int_{-5}^{-3}\frac{1}{(x_1-\xi_1)^2}\cdot g^{\cS}_1(\xi')\dxi'.\]
	Actually for $x'\in B'_1$, we have that $2\leq x_1-\xi_1\leq 6$. Hence for $n\geq 3$ we have $C_1\leq \psi(x')\leq C_2<0$, and for $n=2$ we have $0<C_1\leq \psi(x')\leq C_2$. Hence we have $\|\psi\|_{L^q(B'_{\frac{1}{2}})}\neq 0$.

	Since $g^{\cT}(t)$ is supported in $(\frac{3}{4},\frac{7}{8})$, $\theta(x_n,t)$ is defined for $(x_n,t) \in \R\times \R$, and vanishes for $t<3/4$. 
		We have for $1<t<\infty$,
	\[|\theta(x_n,t)|\lesssim t^{-3/2}e^{-\frac{ x_n^2}{10t}} \cdot\|g^{\cT}\|_{L^{\infty}(\R)},\]
	and for $0<t\leq1$,
	\begin{align*}
		|\theta(x_n,t)|\lesssim
		\int_0^{t}\frac{1}{s^{\frac{3}{2}}}e^{-\frac{x_n^2}{8s}}\ds\cdot\|g^{\cT}\|_{L^{\infty}(\R)}\lesssim x_n^{-1}\cdot\|g^{\cT}\|_{L^{\infty}(\R)}.
	\end{align*}
	This implies that for $1<q<\infty$, using \eqref{ze},
	\begin{align}\label{ub2}
		\begin{split}
			\int_1^{\infty}\int_0^{\infty}|\theta(x_n,t)|^q\dx_n\dt\lesssim \|g^{\cT}\|_{L^{\infty}(\R)},\\
			\int_{0}^{1}\int_{\frac{1}{2}}^{\infty}|\theta(x_n,t)|^q\dx_n\dt\lesssim \|g^{\cT}\|_{L^{\infty}(\R)}.
		\end{split}
	\end{align}
	Note that both inequalities above are not true when $q=1$. By Lemma \ref{Besov est}, we have 
	\begin{align}\label{ub3}
		\infty=\|g^{\cT}\|_{\dot{B}_{q,q}^{\frac{1}{2}-\frac{1}{2q}}}\lesssim \|\theta(x_n,t)\|_{L^{q}(\R_+\times \R)}.
	\end{align}
	With the aid of \eqref{ub1}, \eqref{ub2}, \eqref{ub3}, and the fact that $\theta(x_n,t)=0$ when $t<\frac{3}{4}$, we conclude that 
	\begin{align}\label{key blow-up term2}
		\begin{split}
			\|\theta(x_n,t)\|_{L^{q}((0,\frac{1}{2})\times (\frac{3}{4},1))}&=\|\theta(x_n,t)\|_{L^{q}((0,\frac{1}{2})\times (0,1))}\\
			&\geq \|\theta(x_n,t)\|_{L^{q}(\R_+\times \R)}-c\|g^{\cT}\|_{L^{\infty}(\R)}=\infty.
		\end{split}
	\end{align}
	Hence
	\begin{align*}
		\|H_1\|_{L^{q}(Q_{\frac{1}{2}}^+)}&=\|\theta(x_n,t)\|_{L^{q}((0,\frac{1}{2})\times (\frac{3}{4},1))} \|\psi\|_{L^q(B'_{\frac{1}{2}})}=\infty.
	\end{align*}
	By the above unboundedness and \eqref{5.28}-\eqref{5.16}, \eqref{5.17}, \eqref{u_b2}, we obtain that
	\begin{equation}
		\|\partial_n^2u_1\|_{L^{q}(Q_{\frac{1}{2}}^+)}=\infty.\qedhere
	\end{equation}
\end{proof}

Theorem \ref{thm_main2} now follows from Lemmas \ref{lem 5-1}-\ref{lem 5-3}. 

\begin{rem}\label{rem5.4}
	When we further require the boundary data $\bm{a}$ to be continuous, we cannot use $g^{\cT}(t)\in L^{\infty}(\R)\setminus \dot{B}^{\frac{1}{2}-\frac{1}{2q}}_{q,q}(\R)$ satisfying \eqref{gt}.
	Instead, we can choose $g^{\cT}\in C_c(\frac{1}{4},1]\cap C^{1}(0,1)$ and $g^{\cT}=(1-t)^{\beta}$ when $t\in(\frac{1}{2},1)$, for $\beta\in (0,\frac{1}{2}]$. Then we can prove $\bm{u}$ has unbounded second derivative near boundary at time $t=1$, similar to \cite{Kang2023}.

\end{rem}

\section{Shear flow example}\label{Sec 6}
In this section,  we prove Theorem \ref{thm_main3}. We follow Seregin-\v Sver\'ak \cite{Seregin2010}
and look for shear flow solutions of the Stokes system \eqref{Stokes_Eq}--\eqref{Navier-BC} in $\R^n_+ \times (0,2)$ with Navier boundary value $\bm{a}=0$ in the form
\begin{equation}\label{uu}
\bm{u}(x,t)=(v(x_n,t),0,\ldots,0), \ \ p(x,t)=-g'(t)x_1,
\end{equation}
where  $g(t)\in C_{c}^{\infty}(\R_+)$.
It's easy to see that the convection term $\bm{u}\cdot\nabla\bm{u}$ is zero, so it is also a solution of Navier--Stokes equations. Denote $y=x_n>0$, and we can reformulate the original equations as
\begin{equation}\label{1D-Stokes}
\partial_t v(y,t)-v_{yy}(y,t)=g'(t),\ \ \ v_y(0,t)-\alpha v(0,t)= 0, \ \ \ v(y,0)=0.
\end{equation}
For the heat equation with Robin BC \cite{Keller1981}, the solution is given by
\begin{equation}\label{v1}
v(y,t)=\int_{0}^t g'(t-\tau)\int_0^{\infty}G(y,\xi,\tau)\dxi \dtau,
\end{equation}
using its Green function
\begin{align*}
G(y,\xi,t)&=\Gamma(y-\xi,t)-\Gamma(y+\xi,t)-2\pd_{\xi}\int_0^{\infty}e^{-\alpha z}\Gamma(y+\xi+z,t)\dz.
\end{align*}
Here $\Gamma(y,t)$ is 1D heat kernel. Direct calculation gives
\begin{align}\label{Ge}
\int_{0}^{\infty}G(y,\xi,t)\dxi
&=2\int_{0}^{y}\Gamma(\xi,t)\dxi 
+2\int_{0}^{\infty} e^{-\alpha z} \Gamma(y+z,t)\dz,\\
\partial_{y}\int_{0}^{\infty}G(y,\xi,t)\dxi
&=2\alpha\int_{0}^{\infty} e^{-\alpha z} \Gamma(y+z,t)\dz.\notag
\end{align}
Since
\begin{equation*}
v_y-\alpha v=-2\alpha\int_{0}^{t} g'(t-\tau) \int_{0}^{y}\Gamma(\xi,\tau)\dxi\dtau \rightarrow 0,\ \  \textrm{as}\ y\rightarrow0,
\end{equation*}
the boundary condition is satisfied.
It is easy to check $v(y,t)$ in \eqref{v1} is a smooth solution (up to the boundary $y=0$) of heat equation \eqref{1D-Stokes}.

By \eqref{Ge}, we get
\begin{equation*}
\lim_{t\rightarrow 0} \int_{0}^{\infty}G(y,\xi,t)\dxi=1,
\end{equation*}
and
\begin{equation*}
\partial_{t}\int_{0}^{\infty}G(y,\xi,t)\dxi=-2\alpha\Gamma(y,t)+2\alpha^2\int_0^{\infty}e^{-\alpha z}\Gamma(y+z,t)\dz.
\end{equation*}
Using integration by parts of the derivative $\partial_{t}$, we can obtain an equivalent formula of \eqref{v1},
\begin{align}\label{shear_flow1}
v(y,t)&=g(t)+\int_{0}^t g(t-\tau)\partial_{\tau}\int_0^{\infty}G(y,\xi,\tau)\dxi \dtau\notag\\
&=g(t)-2\alpha\int_0^t g(t-\tau)\Gamma(y,\tau)\dtau+2\alpha^2\int_0^t g(t-\tau)\int_0^\infty e^{-\alpha z}\Gamma(y+z,\tau)\dz\dtau.
\end{align}
Notice that 
\[|\Gamma(y,t)|+\left|\int_0^\infty e^{-\alpha z}\Gamma(y+z,t)\dz\right|\lesssim \frac{1}{\sqrt{t}},\]
so by Young's convolution inequality,
\begin{align*}
\|v(y,\cdot)\|_{L^q(0,T)}\lesssim (1+T^{\frac{1}{2}})\,\|g(t)\|_{L^q(0,T)},
\end{align*}
for $1\leq q \leq \infty$. Since $C_c^{\infty}$ is dense in $L^{q}$ for $1\leq q<\infty$, by the density argument it is easy to check that for any $g\in L^q_c(\R_+)$ with $1\leq   q<\infty$, $v(y,t)$ in \eqref{shear_flow1} satisfies the weak form of 
\eqref{1D-Stokes}, and hence $\bm{u}$ given by \eqref{uu} satisfies the weak form of
Stokes systems \eqref{cor_weak form3} with Navier boundary value $\bm{a}=0$.

\begin{lem}
Fix $1<q<\infty$.
Let $g(t)=g^{\cT}(t)$ satisfy \eqref{gt}, $v(y,t)$ be given by \eqref{shear_flow1}, and the solution  $\bm{u}$ be given by \eqref{uu}. We have 
\begin{align}\label{shear_flow2}
\|\bm{u}\|_{L^{\infty}(Q_1^+)}+\|\nabla \bm{u}\|_{L^{\infty}(Q_1^+)}<\infty, \ \ \ \|\nabla^2 \bm{u}\|_{L^{q}(Q^+_{\frac{1}{2}})}=\infty.
\end{align}
\end{lem}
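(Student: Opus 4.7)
The plan is to split $v$ into a rough part whose second $y$-derivative blows up in $L^q$ and a smoother part whose second $y$-derivative is bounded, then use Lemma~\ref{Besov est} together with a localization argument to place the blow-up inside $Q_{1/2}^+$. Writing $g=g^{\cT}$, formula \eqref{shear_flow1} reads
\[
v(y,t)=g(t)-2\alpha\, f(y,t)+2\alpha^2\, \tilde f(y,t),
\]
where $f(y,t)=\int_0^t g(t-\tau)\Gamma(y,\tau)\dtau$ and $\tilde f(y,t)=\int_0^t g(t-\tau)\int_0^\infty e^{-\alpha z}\Gamma(y+z,\tau)\dz\dtau$.

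For the first half of \eqref{shear_flow2} I would verify boundedness term by term. Since $\|g\|_\infty<\infty$, $\int_0^1\Gamma(y,\tau)\dtau\lec 1$ and $\int_0^\infty e^{-\alpha z}\Gamma(y+z,\tau)\dz\lec \tau^{-1/2}$, Young's inequality already gives $\|v\|_{L^\infty(Q_1^+)}<\infty$. For $\partial_y v$ the key observation is the scale-invariant identity
\[
\int_0^\infty |\partial_y \Gamma(y,\tau)|\dtau=\tfrac{1}{2}\quad\text{for every }y>0,
\]
proved by the substitution $s=y^2/(4\tau)$; moving $\partial_y$ onto the kernels in $f$ and $\tilde f$ and applying this identity then yields $\|\partial_y v\|_{L^\infty(Q_1^+)}<\infty$.

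For the smoother piece I would integrate by parts twice in $z$, using $\partial_y\Gamma(y+z,\tau)=\partial_z\Gamma(y+z,\tau)$, to get
\[
\partial_y^2 \tilde f(y,t)=\alpha^2\, \tilde f(y,t)-\alpha\, f(y,t)-\int_0^t g(t-\tau)\partial_y\Gamma(y,\tau)\dtau,
\]
each of whose terms is bounded on $(0,\infty)_y\times(0,1)_t$ by the estimates of the previous paragraph. Hence in $\partial_y^2 v=-2\alpha\partial_y^2 f+2\alpha^2\partial_y^2\tilde f$ any blow-up must come from $\partial_y^2 f$.

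Extending $g$ by zero to $\R$ one has $f(y,t)=\int_\R \Gamma(y,t-s)g(s)\ds$, matching the hypothesis of Lemma~\ref{Besov est}. Since $g\notin \dot{B}^{\frac{1}{2}-\frac{1}{2q}}_{q,q}(\R)$ by \eqref{gt}, that lemma yields $\|\partial_y^2 f\|_{L^q(\R_+\times\R)}=\infty$. The main obstacle, and the reason for choosing $g^{\cT}$ supported in $(3/4,7/8)$, is to localize this blow-up to $(0,1/2)_y\times(3/4,1)_t$. I would do this by showing the $L^q$ norm of $\partial_y^2 f$ on the complement is finite: on $\{t\le 3/4\}$, $f\equiv 0$ by causality and the support of $g$; on $\{t\in(3/4,1),\, y>1/2\}$, the pointwise bound $|\partial_y^2 f|\lec \sup_{\tau>0}\tau^{-3/2}e^{-y^2/(10\tau)}\lec y^{-3}$ is $L^q$-integrable since $3q>1$; on $\{t>1\}$, the decay $|\partial_y^2 f|\lec t^{-3/2}e^{-y^2/(10t)}$ (for $t\ge 2$, with a trivially bounded Gaussian estimate on $(1,2)$) yields an $L^q$-tail controlled by $\int_2^\infty t^{(1-3q)/2}\dt<\infty$ for $q>1$. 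Thus the infinite $L^q$ norm concentrates on $(0,1/2)\times(3/4,1)$, and by $x'$-invariance of the shear flow this gives $\|\nabla^2\bm{u}\|_{L^q(Q_{1/2}^+)}=\infty$, completing the plan.
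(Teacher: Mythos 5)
Your proposal is correct and follows essentially the same route as the paper: the decomposition $v = g - 2\alpha f + 2\alpha^2\tilde f$ (equivalently, $v_{yy} = -2\alpha\partial_y^2 f + \alpha v_y$ after the integration-by-parts identity $\partial_y\tilde f = -f+\alpha\tilde f$), pointwise heat-kernel bounds for $v$, $v_y$, and the smoother piece, and Lemma~\ref{Besov est} applied to $\partial_y^2 f=\theta$ followed by a localization of the infinite $L^q$ norm to $(0,\tfrac12)\times(\tfrac34,1)$. The paper outsources the localization to the computations \eqref{ub2}--\eqref{key blow-up term2} in the proof of Lemma~\ref{lem 5-3}, whereas you rederive it directly (your $\sup_\tau$ bound on $\{y>\tfrac12,\,t\in(\tfrac34,1)\}$ is valid because $g^{\cT}(t-\cdot)$ is supported on a $\tau$-interval of length $\le\tfrac18$), but this is a restatement rather than a different argument.
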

\begin{proof}
Notice that, by taking derivative of \eqref{shear_flow1},
\begin{align*}
\pd_y v(y,t)&=-2\alpha\int_0^tg(t-\tau) \pd_y\Gamma(y,\tau)\dtau-2\alpha^2\int_0^t g(t-\tau)\Gamma(y,\tau)\dtau\\
& \ \ \ +2\alpha^3\int_0^t g(t-\tau)\int_0^\infty e^{-\alpha z}\Gamma(y+z,\tau)\dz\dtau,
\end{align*}
and
\begin{equation}\label{6.88}
v_{yy}(y,t)=-2\alpha\int_0^tg(t-\tau) \pd^2_y\Gamma(y,\tau)\dtau+\alpha v_y(y,t).
\end{equation}
For $0<t\leq1$, $y\in\R_+$
\begin{equation}\label{6.99}
|v(y,t)|+|v_y(y,t)|\lesssim 1+\int_{0}^{t}\tau^{-\frac{1}{2}}\dtau+\int_{0}^{t}\frac{y}{\tau^{\frac{3}{2}}}e^{-\frac{y^2}{4\tau}}\dtau \lesssim1.
\end{equation}
Denote the main term of $v_{yy}$ in \eqref{6.88} same as \eqref{key blow-up term}
\begin{equation*}
\theta(y,t)=\int_0^tg(t-\tau) \pd^2_y\Gamma(y,\tau)\dtau.
\end{equation*}
By the same argument in the proof of Lemma \ref{lem 5-3}, we get the same estimate as \eqref{key blow-up term2},
\begin{equation}\label{6.10}
\|\theta(y,t)\|_{L^{q}((0,\frac{1}{2})\times(\frac{3}{4},1))}=\infty.
\end{equation}
By \eqref{6.88}, \eqref{6.99} and \eqref{6.10}, we reach \eqref{shear_flow2}.
\end{proof}
The above lemma proves Theorem \ref{thm_main3} for the $1<q<\infty$, and the $L^\infty$ blow-up in $Q_{\frac{1}{2}}^+$ follows from $L^q$ blow-up in $Q_{\frac{1}{2}}^+$.

\begin{rem}\label{rem62}
This shear flow example can also be used to prove the blow-up of the gradient of velocity near boundary under zero-BC, i.e., \cite[Theorem 1.1]{Chang2023}. We can use the same ansatz \eqref{uu}, \eqref{v1}, with the Green function replaced by
\begin{align*}
	G(y,\xi,t)&=\Gamma(y-\xi,t)-\Gamma(y+\xi,t),
\end{align*}
and setting $g=g^{\cT}$ satisfying \eqref{gt}.\hfill$\square$
\end{rem}

\begin{rem}\label{rem63}
If we set pressure $p=-g^{\cT}x_1$ in \eqref{uu} (instead of $p=-(g^{\cT})'x_1$), we can prove the following estimates, 
\begin{align*}
\|\bm{u}\|_{L^{\infty}(Q_1^+)}+\|\nabla^3 \bm{u}\|_{L^{\infty}(Q_1^+)}+\|p\|_{L^{\infty}(Q_1^+)}<\infty, \\
\|\nabla^4 \bm{u}\|_{L^{q}(Q^+_{\frac{1}{2}})}=\infty.
\end{align*}
In other words, velocity blows up at fourth derivative.
This is a supplementary result of estimate \eqref{1.7}.
\hfill$\square$
\end{rem}

\section*{Acknowledgments}
We warmly thank K.~Kang for fruitful discussions on his paper \cite{Chang2023} with T.~Chang. We also thank Rulin Kuan for reference \cite{Schwartz1966} for Remark \ref{rem22}.
Hui Chen was supported in part by National Natural Science Foundation of China under grant [12101556] and Zhejiang Provincial Natural Science Foundation of China under  grant [LY24A010015]. 
The research of both Liang and Tsai was partially supported by Natural Sciences and Engineering Research Council of Canada (NSERC) under grant RGPIN-2023-04534.

\bibliography{NavierBC2}
\bibliographystyle{abbrv}

\end{document}